\newenvironment{axioms}
 {\enumerate[label=\textbf{A\arabic*.}, ref=A\arabic*]}
 {\endenumerate} 
\newtheorem{theorem}{Theorem}[section]  
\newtheorem{proof}[theorem]{Proof}
\newtheorem{corollary}[theorem]{Corollary}
\begin{document}

\title{On Globalization Problem of \\ Multi-Hamiltonian Formalisms}

\maketitle

\begin{center} 
 Begüm Ateşli$^{\ast,\dagger}$\footnote{e-mails: 
\href{mailto:b.atesli@gtu.edu.tr}{b.atesli@gtu.edu.tr}, \href{mailto:begumatesli@itu.edu.tr}{begumatesli@itu.edu.tr} corresponding author,} and Aybike Çatal-Özer $^{\ast}$\footnote{e-mail: 
\href{ozerayb@itu.edu.tr}{ozerayb@itu.edu.tr},}
  \\

\bigskip

$^{\ast}$Department of Mathematics Engineering \\ İstanbul Technical University,  34467 Maslak, İstanbul

\bigskip

$^\dagger$Department of Mathematics, \\ Gebze Technical University, 41400 Gebze,
Kocaeli, Turkey.
\bigskip

\end{center}

\begin{abstract}
The globalization problem arises when local tensor fields possess a given property (such as being symplectic or Poisson) but cannot be consistently extended to a global object due to incompatibilities on chart overlaps. A notable instance occurs in locally conformal analysis, where local representatives coincide only up to conformal factors. The locally conformal approach not only enables the definition of novel and rich geometric structures but also provides Hamiltonian formulations for irreversible systems, yielding physically meaningful dynamical consequences. While extensively studied for symplectic, cosymplectic, and Poisson geometries, its systematic extension to multi-Hamiltonian settings remains largely unexplored. 

In this work, we investigate locally conformally Nambu--Poisson and locally conformally generalized Poisson manifolds, showing that these structures naturally induce Nambu--Jacobi and generalized Jacobi manifolds, respectively. From a dynamical point of view, we construct Hamiltonian-type evolution equations, and for locally conformal $3$-Nambu structures, we introduce locally conformal bi-Hamiltonian systems. The resulting dynamics are particularly suitable for modeling irreversible multi-Hamiltonian processes, as they generally do not preserve the system's energy. 

Collectively, this work provides a unified framework for understanding both the geometric structures and Hamiltonian dynamics of classical, Nambu--Poisson, and generalized Poisson manifolds within a locally conformal context.

\end{abstract} 
\noindent{\bf Key Words.} Locally conformally Poisson manifolds; Jacobi manifolds; Nambu--Poisson manifolds; Generalized Poisson manifolds.    

\noindent{\bf Mathematics Subject Classification.} 53D17; 37J35. 

\tableofcontents

\onehalfspacing

\section{Introduction} 

A global tensorial field (such as a symplectic form, a cosymplectic structure, or more generally a Poisson bivector field) on a manifold necessarily induces local representatives in every coordinate chart that preserve the same structural property \cite{abraham1978foundations,Mars88,LiMa,vaisman2012lectures,weinstein1983local,Weinstein98}.  The converse, however, does not hold in general: the existence of local tensor fields enjoying a given property on each chart does not guarantee that they transform appropriately on overlaps so as to glue into a globally defined tensor field with the same property. This obstruction is referred to as the globalization problem. 

\noindent \textbf{Locally Conformal Approach.} Within the globalization phenomenon, a distinguished instance arises when the local representatives fail to satisfy the exact tensorial transformation rules on overlaps but instead agree only up to multiplicative conformal factors. This perspective is formalized in the theory of  locally conformal geometries, which originated with the introduction of locally conformally symplectic manifolds \cite{Banyaga2002,Bazzoni2018, Hwa,Vaisman85}. In this setting, each chart is equipped with a symplectic form, yet on intersections the local symplectic $2$-forms no longer coincide. Rather, they differ by conformal rescalings. Provided that the corresponding conformal factors satisfy the cocycle condition, the local symplectic forms glue together into a line-bundle-valued $2$-form, which in turn fails to be closed. 

The locally conformal approach should not be regarded as a narrow or exceptional solution to the globalization problem. For instance, locally conformally symplectic manifolds are natural examples of Jacobi manifolds \cite{EsLeSaZaReview,Lich,Marle91,vaisman2002jacobi} (or, equivalently, of local Lie algebras in the sense of Kirillov \cite{Kirillov}). More concretely, every even-dimensional leaf of a Jacobi manifold carries a locally conformal symplectic structure \cite{LeSa17}. Thus, even from a purely geometric perspective, the locally conformal viewpoint on the globalization problem emerges not as a special case, but rather as a natural generalization.

The locally conformal geometry also has important consequences for Hamiltonian dynamics. While the symplectic formalism yields reversible and energy-preserving dynamics, the locally conformal symplectic framework naturally leads to irreversible or/and dissipative dynamics.

In the literature, owing to these geometric and dynamical features, the framework of locally conformal analysis has been extended well beyond symplectic geometry. Direct extensions have been carried out for Lagrangian dynamics~\cite{LCS-Lag-H} while the Hamilton-Jacobi theory in the locally conformal setting has been developed in~\cite{EsenLeonSarZaj1}. Furthermore, locally conformal analogues of cosymplectic structures~\cite{AtEsLeSa23,LeonLCCos} and Poisson structures~\cite{Vais-Dirac,Luca16,Luca16} have been studied in detail. Field-theoretic generalizations have also been pursued in the contexts of $k$-symplectic and multisymplectic geometry~\cite{Es-Cauchy,Es-k-symp}. Altogether, these developments highlight the role of locally conformal analysis as a unifying framework bridging geometric structures and dynamical systems across diverse settings.

\noindent \textbf{Multi-Hamiltonian Formalisms.}
Poisson manifolds constitute one of the most suitable geometric settings for the study of Hamiltonian dynamics, and as such, the literature on this subject is vast, see, for example, \cite{BhasVisw88,Laurent13,Marsden1999,Vaisman94,Weinstein98}. The algebraic structure underlying a Poisson manifold is characterized either by the Jacobi identity for the Poisson bracket---ensuring integrability of the associated Hamiltonian vector fields---or, equivalently, by the vanishing of the Schouten--Nijenhuis bracket of the Poisson bivector field. 
When one moves beyond a single Hamiltonian function, the construction of multi-Hamiltonian formalisms requires generalizations of these two fundamental conditions.

As discussed in the literature, there are two main ways to generalize Poisson geometry to multi-Hamiltonian settings, and they lead to different outcomes, \cite{deAzPeBu96,Gautheron1996,IbanezLeonPadron98,MarmoNJ,Vallejo}. 

\noindent \textbf{(1) Nambu--Poisson.} Extending the condition of integrability of Hamiltonian vector fields leads to the so-called fundamental identity (introduced by Takhtajan), which defines Nambu--Poisson geometry. 

\noindent \textbf{(2) Generalized Poisson.} On the other hand, extending the condition that the Schouten--Nijenhuis bracket of the Poisson bivector vanishes leads instead to generalized Poisson structures, defined by requiring the vanishing of higher-order Schouten--Nijenhuis brackets of multivector fields. 

In Section~\ref{sec:LCP-Jac}, we recall the mathematical formulation of Poisson manifolds and set the stage for their higher-order extensions. We then describe the two distinct generalizations of Poisson geometry in more detail. In our work, these cases are treated separately: Sections~\ref{sec:Nambu-PoisJac} and \ref{sec:LCNPois} are devoted to Nambu--Poisson geometry, while Section~\ref{sec:LCGP} addresses generalized Poisson geometry.

Nambu--Poisson manifolds possess a particularly rich geometry: the fundamental identity provides the basis for bi-Hamiltonian \cite{EsGhGu16,fernandes1994completely,GuNu93} and even multi-Hamiltonian structures \cite{esenvd2016,GoNu01,Goriely,Tempesta}, which in turn underlie complete integrability. For instance, tri-Hamiltonian structures in four dimensions allow super-integrability, a phenomenon of central importance in integrability theory. 

On the other hand, the natural extension of Poisson geometry to dissipative dynamics is realized in the theory of Jacobi manifolds. In this sense, the aforementioned multi-Hamiltonian generalizations also carry over to Jacobi geometry, leading to the notions of Nambu--Jacobi and generalized Jacobi manifolds.

\medskip
\noindent \textbf{Aim of the Present Work.} 
In this work, we investigate the locally conformal extensions of Nambu--Poisson and generalized Poisson manifolds, with particular emphasis on the induced multi-Hamiltonian dynamics. Although some definitions of such locally conformal generalizations can be found in the literature, detailed and systematic studies remain rare. Our primary objective is to establish a coherent framework in which local Nambu--Poisson and generalized Poisson structures---which may fail to be global---are glued together by means of conformal factors.

The novelty of this study lies in providing a unified locally conformal approach to both Nambu--Poisson and generalized Poisson geometries, together with an explicit construction of the corresponding Hamiltonian dynamics. Our first goal is to define locally conformally Nambu--Poisson and locally conformally generalized Poisson manifolds, and to demonstrate that they naturally arise as examples of Nambu--Jacobi and generalized Jacobi manifolds, respectively. Our second goal is to derive Hamiltonian-type dynamical equations within these settings. As a byproduct, we present a theory of locally conformal bi-Hamiltonian dynamics for the case of $3$-Nambu--Poisson manifolds. 

\medskip
\noindent \textbf{Content.} 
The paper is organized as follows. 
Section \ref{sec:LCP-Jac} examines  Hamiltonian dynamics, starting from classical Poisson and Jacobi manifolds and extending to locally conformally Poisson manifolds, highlighting how locally conformal transformations affect the Hamiltonian flow. Section \ref{sec:Nambu-PoisJac} focuses on Nambu--Poisson and Nambu--Jacobi manifolds, including $3$-Nambu structures, and investigates their associated Hamiltonian and bi-Hamiltonian dynamics. Section~\ref{sec:LCNPois} contains our main results on locally conformal analysis of Nambu--Poisson setting. Accordingly,  we introduce locally conformally Nambu--Poisson manifolds and explore the corresponding Hamiltonian dynamics, addressing $3$- and $k$-Nambu cases and the interplay with bi-Hamiltonian structures. Finally, Section \ref{sec:LCGP} extends the analysis to generalized Poisson Hamiltonian systems, discussing generalized Poisson and generalized Jacobi manifolds and their locally conformal analogues. 

\section{Poisson, Jacobi and Locally Conformal Poisson Setting}\label{sec:LCP-Jac}

\subsection{Poisson Manifolds and Hamiltonian Dynamics}

Consider a manifold $\mathcal{P}$ equipped with a skew-symmetric bilinear bracket defined on the space $C^\infty(\mathcal{P}) $ of smooth functions \cite{BhaskaraViswanath,Dufour,Vaisman94,weinstein1983local,Weinstein98} denoted as   \begin{equation}\label{PoissonBracket}
\{ \bullet,\bullet\}~:C^\infty(\mathcal{P})\times C^\infty(\mathcal{P}) \longrightarrow C^\infty(\mathcal{P}) .
\end{equation}
This is called Poisson bracket if it satisfies, for all $H,F_1,F_2$ and $F_3$ in $C^\infty(\mathcal{P}) $, 
\begin{axioms}
  \item[\textbf{P1.}]   Jacobi identity
	\begin{equation} \label{P2}
\{ \{ F_1, F_2\}, F_{3}\} + \{ \{F_2, F_3 \}, F_{1} \} + \{ \{ F_3, F_1\}, F_{2}\}=0,
\end{equation} 
	\item[\textbf{P2.}] Leibniz identity
\begin{equation}\label{P1}
\{ H F_1, F_2\}= H \{F_1, F_2 \} +
\{ H, F_2 \} F_1.
\end{equation}
\end{axioms}
If one assumes only the Leibniz identity (stated in P1) then the bracket is called an almost Poisson bracket. 
 

\textbf{Hamiltonian Vector Fields.}
Given a smooth function $H$ defined on a (almost) Poisson manifold $\mathcal{P}$ , the Hamiltonian vector field $X_H$ is defined to be, for all $F$ in $C^\infty(\mathcal{P})$,
\begin{equation}\label{P-Hamvf}
X_H(F)=\{F,H\}.
\end{equation}
For a given observable $F$, the Hamiltonian dynamics generated by a Hamiltonian function $H$ is 
\begin{equation} \label{P-HamEq}
\frac{dF}{dt}=\{F,H\}.
\end{equation} 
Skew-symmetry of the bracket manifests that the Hamiltonian function $H$ is conserved along the motion, that is $\{H,H\}=0$. Assuming the Hamiltonian function $H$ to be the total energy, this is conservation of energy. 

As a manifestation of the Jacobi identity, the (characteristic) distribution determined by the image spaces of all Hamiltonian vector fields is integrable. Further, we have that 
\begin{equation} \label{JL-iden}
[X_{H_1},X_{H_2}]=-X_{\{H_1,H_2\}},
\end{equation}
where the bracket on the left-hand side is the Jacobi-Lie bracket of vector fields, whereas the bracket on the right-hand side is the Poisson bracket. An important consequence of the Jacobi identity is the Poisson theorem which is the conservation of the flows
\begin{equation}\label{Poisson-theorem}
\frac{d}{dt}\{F_1,F_2\}= \{\frac{d F_1}{dt},F_2\}+ \{ F_1, \frac{d F_2}{dt}\}.
\end{equation}

\textbf{Bivector Field Realization.}
Due to the Leibniz identity, we identify an almost Poisson bracket $\{\bullet,\bullet\}$ with a bivector field $\Lambda$ on $\mathcal{P}$ given by 
\begin{equation} \label{bivec-PoissonBra}
\Lambda(dF_1,dF_2):=\{F_1,F_2\}
\end{equation}
for all $F_1$ and $F_2$ in $C^\infty(\mathcal{P})$. A direct calculation proves that  
\begin{equation} \label{SN-bra-JI}
\frac{1}{2}[\Lambda,\Lambda](dF_1,dF_2,dF_3)=~\circlearrowright \{F_1,\{F_2,F_3\}\},
\end{equation}
where the bracket on the left-hand side is the Schouten--Nijenhuis bracket (see Appendix \ref{sec:SN}) and $\circlearrowright $ denotes the cyclic sum. It is immediate to see from \eqref{SN-bra-JI} that $\{\bullet,\bullet\}$ is a Poisson bracket if and only if the bivector field $\Lambda$ commutes with itself under the Schouten--Nijenhuis bracket, that is, 
\begin{equation} \label{Poisson-cond}
[\Lambda,\Lambda]=0.
\end{equation} 
So, we may define a Poisson manifold by a pair $(\mathcal{P},\Lambda)$ with  $\Lambda$ satisfying the commutativity condition (that is, the Jacobi identity) \eqref{Poisson-cond}. A bivector field satisfying \eqref{Poisson-cond} is called a Poisson bivector. In terms of the bivector notation, for a Hamiltonian function $H$, the Hamiltonian vector field \eqref{P-Hamvf} is computed to be 
\begin{equation}
    X_H=\Lambda^\sharp (dH).
\end{equation}
Here, $\Lambda^\sharp$ is the musical mapping (see Appendix \ref{sec:music}) associated with the bivector field $\Lambda$.

\textbf{Two generalizations of Poisson algebra to higher order algebras.} In the literature, mainly, there are two generalizations of Poisson manifolds to the multilinear setting as a multilinear algebra on the function space:

\noindent \textbf{(1)} One is to generalize the binary Poisson bracket to multilinear Poisson bracket in such a way that the Poisson theorem \eqref{Poisson-theorem} holds to be true. This is equivalent to have an integrable characteristic distribution. As we shall present in the upcoming subsections, this promotes so called fundamental (Fillippov or Takhtajan) identity. In this case we arrive at a Nambu--Poisson algebra.  

\noindent \textbf{(2)} Following the Jacobi identity formulation in \eqref{Poisson-cond}, another way to generalize Poisson bivector field to a higher order multivector field can be achieved by a multivector field commuting under the Schouten--Nijenhuis bracket. Notice that the Schouten--Nijenhuis bracket vanishes for multivector fields of odd order. This generalization can only be performed for multivectors of even order. This is generalized Poisson bracket. 

Both of these geometries cover Poisson manifold if the order is two. We shall examine these generalizations one by one.

\subsection{Jacobi Manifolds and Hamiltonian Dynamics} \label{sec:JacMan}

Consider a manifold $\mathcal{P}$ equipped with a skew-symmetric bilinear bracket
\begin{equation}\label{JacobiBracket}
\{ \bullet,\bullet\}^{\rm(J)}~:C^\infty(\mathcal{P})\times C^\infty(\mathcal{P}) \longrightarrow C^\infty(\mathcal{P}) .
\end{equation}
We call this a  Jacobi bracket if it satisfies the following axioms for all $F_1,F_2,F_3 \in C^\infty(\mathcal{P})$:  

\begin{axioms}
  \item[\textbf{J1.}]  Jacobi identity 
	\begin{equation} \label{J2}
	\{\{F_1, F_2\}^{\rm(J)}, F_3\}^{\rm(J)} + \{\{F_2, F_3\}^{\rm(J)}, F_1\}^{\rm(J)} + \{\{F_3, F_1\}^{\rm(J)}, F_2\}^{\rm(J)} = 0,
	\end{equation} 

  \item[\textbf{J2.}]  Weak Leibniz rule 
  \begin{equation}\label{supp}
        \operatorname{supp}(\{F_1,F_2\}^{\rm(J)}) \subseteq \operatorname{supp}(F_1) \cap \operatorname{supp}(F_2). 
  \end{equation}
 \end{axioms}
A manifold with a Jacobi bracket is called a Jacobi manifold and denoted as $(\mathcal{P},\{\bullet,\bullet\}^{\rm(J)})$. We refer the incomplete list \cite{Lichnerowicz-Poi,Lichnerowicz-Jacobi,Marle-Jacobi,VitaWade20} for the theory of  Jacobi manifolds. Jacobi manifold is a local Lie algebra in the sense of Kirillov \cite{Kirillov}.  The inverse of this assertion is also true, that is, a local Lie algebra determines a Jacobi structure. In the following subsection, we shall illustrate this by showing that every locally conformally Poisson manifold is a Jacobi manifold.

\textbf{Multivector Field Realization.}
Referring to a Jacobi manifold $(\mathcal{P},\{\bullet,\bullet\}^{\rm(J)})$, we associate a vector field $Z$ and a bivector field $\Lambda$ as follows:
\begin{equation}\label{baris}
    \begin{split}
        Z(dF) &:= \{1,F\}^{\rm(J)}, \\
        \Lambda(dF_1,dF_2) &:= \{F_1,F_2\}^{\rm(J)} - F_1 Z(dF_2) + F_2 Z(dF_1),
    \end{split}
\end{equation}
for all $F,F_1,F_2 \in C^\infty(\mathcal{P})$. A direct calculation shows that $\Lambda$ and $Z$ satisfy the identities
\begin{equation}\label{ident-Jac}
        [\Lambda,\Lambda] = -2 Z \wedge \Lambda, 
        \qquad 
        [Z,\Lambda] = 0,
\end{equation}
where $[\bullet,\bullet]$ denotes the Schouten--Nijenhuis bracket.
Rearranging the terms in \eqref{baris}, we can express the Jacobi bracket as
\begin{equation}\label{bra-Jac}
    \{F_1,F_2\}^{\rm(J)} = \Lambda(dF_1,dF_2) + F_1 Z(F_2) - F_2 Z(F_1).
\end{equation}
It is possible to see that the bracket \eqref{bra-Jac} satisfies the Jacobi identity if and only if the conditions in \eqref{ident-Jac} hold. Further, regarding the bracket \eqref{bra-Jac}, for a product of functions, one obtains
\begin{equation}\label{Jacobi-bracket-} 
    \{H \cdot F_1, F_2\}^{\rm(J)} 
    = H \cdot \{F_1,F_2\}^{\rm(J)} 
      + \{H,F_2\}^{\rm(J)} \cdot F_1 
      - H \cdot F_1 \cdot Z(F_2).
\end{equation}
This shows that the Jacobi bracket is a first-order differential operator.  
This property is equivalent to the weak Leibniz rule \eqref{supp}. Thus, one may give an alternative definition of a Jacobi manifold: a pair $(\Lambda,Z)$, called a Jacobi pair (or Jacobi structure), defines a Jacobi bracket precisely when the identities \eqref{ident-Jac} are satisfied. Equivalently, a Jacobi manifold can be denoted by the triple $(\mathcal{P},\Lambda,Z)$.  

Finally, referring to \eqref{Jacobi-bracket-}, we see that the full Leibniz rule holds if and only if $Z$ vanishes identically. In this case, the first identity in \eqref{ident-Jac} reduces to the Poisson condition \eqref{Poisson-cond}, while the second one is automatically satisfied.  
Thus, we conclude that $(\mathcal{P},\Lambda,Z)$ is a Poisson manifold if and only if $Z=0$. 

\textbf{Hamiltonian Vector Fields.} 
Consider a Jacobi manifold determined by the triplet $(\mathcal{P}, \Lambda, Z)$. 
For a smooth  real-valued Hamiltonian function $H$ on a  Jacobi manifold $(\mathcal{P}, \Lambda, Z)$, the  Hamiltonian vector field $X_{H}$ is defined by
\begin{equation}\label{HamVF-Jac}
X_{H}=\Lambda^\sharp(d H)-H Z = \iota_{dH}\Lambda - HZ. 
\end{equation}
In terms of the Jacobi bracket the Hamiltonian vector field can be written as
\begin{equation}
	\{F,H\}^{\rm(J)} = X_H (F) + FZ(H).
\end{equation}
Note that, for the constant function $H=1$ the Hamiltonian vector field is $-Z$. As proved in \cite{Lichnerowicz-Jacobi,Marle85}, the mapping taking a Hamiltonian function $H$ to the Hamiltonian vector field $X_H$ is a Lie algebra homomorphism by satisfying
\begin{equation}
\left[X_{F}, X_{H}\right]= - X_{\{F, H\}^{\rm(J)}}.
\end{equation}

\subsection{Locally Conformally Poisson Manifolds and Hamiltonian Dynamics}\label{sec:LCP}

In this subsection, we focus on the globalization problem for local Poisson structures that do not naturally glue together to form a global Poisson structure. We approach this phenomenon from a locally conformal perspective.

\noindent \textbf{Locally Conformally Poisson Manifolds.}
Consider a manifold $\mathcal{P}$ admitting a local covering 
\begin{equation}
    \mathcal{P} = \bigsqcup_\alpha U_\alpha.
\end{equation}
On each local chart, we assume the existence of a local Poisson structure. That is, we have local Poisson manifolds 
\begin{equation}
(U_\alpha, \Lambda_\alpha), \quad (U_\beta, \Lambda_\beta), \quad (U_\gamma, \Lambda_\gamma), \quad \dots . 
\end{equation}
These local bivector fields may combine to determine a global bivector field by obeying the usual tensorial transformation rules on overlapping charts. This reads a Poisson manifold. In the present work, however, we do not assume this. Instead, on the intersection of two charts, say $U_\alpha \cap U_\beta$, we assume that a conformal factor relates the local Poisson bivector fields:
\begin{equation}\label{kuskun}
    e^{-\sigma_\alpha} \Lambda_\alpha = e^{-\sigma_\beta} \Lambda_\beta.
\end{equation}
Here, the functions $\{\sigma_\alpha\}$ are local functions reflecting the conformal character of this structure. Furthermore, we assume that their exterior derivatives agree on overlaps, that is,
\begin{equation}
d\sigma_\alpha = d\sigma_\beta.
\end{equation}
Hence, by the Poincar\'e lemma, the functions $\sigma_\alpha$ are local
potential functions of a globally defined closed one-form $\theta$, given by
\begin{equation}\label{theta1}
    \theta\vert_\alpha = d\sigma_\alpha.
\end{equation}
The one-form $\theta$ is called the Lee form associated with the
locally conformal structure.

Note that the equality in \eqref{kuskun} does not allow us to glue the local Poisson structures $\{\Lambda_\alpha\}$ into a global tensorial bivector field on $\mathcal{P}$. Instead, the conformal relation \eqref{kuskun} determines the transition scalars
\begin{equation} \label{transition}
\lambda_{\beta \alpha} = \frac{e^{\sigma_\alpha}}{e^{\sigma_\beta}} = e^{-(\sigma_\beta - \sigma_\alpha)},
\end{equation}
which satisfy the cocycle condition
\begin{equation}\label{cocycle}
\lambda_{\beta \alpha} \, \lambda_{\alpha \gamma} = \lambda_{\beta \gamma}.
\end{equation}
Consequently, by gluing the local Poisson structures $\{\Lambda_\alpha\}$, one obtains a line-bundle--valued bivector field on $\mathcal{P}$.

On the other hand, in light of \eqref{kuskun}, the following local bivector fields
\begin{equation}\label{LCP-bi-vector}
\Lambda|_\alpha = e^{-\sigma_\alpha} \Lambda_\alpha
\end{equation}
can be glued together to define a global bivector field on $\mathcal{P}$. We denote the resulting global bivector field by $\Lambda$  whose local expression on $ U_\alpha $ is given by $ \Lambda|_\alpha $.   

The present geometry, namely the collection $\{(U_\alpha,\Lambda_\alpha)\}$ of local Poisson charts that satisfy the locally conformal relation on overlaps as in \eqref{kuskun}, is called a  locally conformally Poisson manifold; see, for instance, \cite{Vais-Dirac, Luca16}. We denote a locally conformally Poisson manifold as 
\begin{equation}
(\mathcal{P},U_\alpha,\Lambda_\alpha,\sigma_\alpha).
\end{equation}
A locally conformally Poisson manifold defines a local Lie algebra in the sense of Kirillov \cite{Kirillov}. It is important to note that we are able to construct a global bivector field $\Lambda$ from the local ones according to \eqref{LCP-bi-vector}; however, this is achieved at the expense of losing the Poisson property of the original local bivectors.

To explore the properties of the bivector field $\Lambda$, and in light of the intersection condition for the local bivector fields in \eqref{kuskun}, we introduce a set of local functions $\{F_\alpha\}$. Referring to these local functions and the conformal factors $\{\sigma_\alpha\}$, we define a new set of local functions by
\begin{equation}\label{gluefunction}
    F|_\alpha= e^{\sigma_\alpha} F_\alpha, \qquad   H|_\alpha= e^{\sigma_\alpha} H_\alpha, 
\end{equation}
in such a way that the $F|_\alpha$ and $H|_\alpha$ can be glued to form global functions $F$ and $H$, respectively.
We compute the local bracket as follows
\begin{equation}
	\begin{split}
		\{F_\alpha,H_\alpha\}_\alpha &= \Lambda_\alpha(dF_\alpha,dH_\alpha) \\
		&= e^{\sigma_\alpha}\Lambda\vert_\alpha(d(e^{-\sigma_\alpha}F\vert_\alpha),d(e^{-\sigma_\alpha}H\vert_\alpha)) \\
		&= e^{-\sigma_\alpha}\Lambda\vert_\alpha(dF\vert_\alpha - F\vert_\alpha d\sigma_\alpha,dH\vert_\alpha - H\vert_\alpha d\sigma_\alpha).
	\end{split}
\end{equation}
Consequently, we obtain
\begin{equation}\label{gluedPoisson}
	\begin{split}
		e^{\sigma_\alpha}\{F_\alpha,H_\alpha\}_\alpha &= \Lambda\vert_\alpha(dF\vert_\alpha,dH\vert_\alpha) - F\vert_\alpha\Lambda\vert_\alpha(d\sigma_\alpha,dH\vert_\alpha) - H\vert_\alpha\Lambda\vert_\alpha(dF\vert_\alpha,d\sigma_\alpha).
	\end{split}
\end{equation}
We now define the vector field
\begin{equation}\label{LCP-vector}
Z\vert_\alpha := \iota_{d\sigma_\alpha}\Lambda\vert_\alpha,
\end{equation}
and state the following theorem (see also \cite{Luca16}).

\begin{theorem}\label{prop-LCP-Jacobi} A locally conformal Poisson manifold is a Jacobi manifold. More precisely, the pair $(\Lambda,Z)$ where $\Lambda$ is in \eqref{LCP-bi-vector} and $Z$ is the vector field in \eqref{LCP-vector} is a Jacobi pair.  
\end{theorem}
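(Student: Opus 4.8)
The plan is to verify directly the two defining identities of a Jacobi pair from \eqref{ident-Jac}, namely $[\Lambda,\Lambda]=-2\,Z\wedge\Lambda$ and $[Z,\Lambda]=0$, for the global bivector field $\Lambda$ of \eqref{LCP-bi-vector} and the global vector field $Z$ of \eqref{LCP-vector}. First I would observe that both objects are genuinely global: $\Lambda$ by the gluing argument preceding the theorem, and $Z=\iota_\theta\Lambda$ since the local one-forms $d\sigma_\alpha$ are the restrictions of the global Lee form $\theta$ from \eqref{theta1}. Because the Schouten--Nijenhuis bracket is a local operator, it then suffices to check the two identities on an arbitrary chart $U_\alpha$, where $\Lambda|_\alpha=e^{-\sigma_\alpha}\Lambda_\alpha$ with $\Lambda_\alpha$ Poisson.

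Fix such a chart and, to lighten notation, write $\phi=e^{-\sigma_\alpha}$, $\Lambda_0=\Lambda_\alpha$ (so $[\Lambda_0,\Lambda_0]=0$), $\Lambda=\phi\Lambda_0$, and $W=\iota_{d\sigma_\alpha}\Lambda_0$, so that $Z=\iota_{d\sigma_\alpha}\Lambda=\phi W$. For the first identity I would expand $[\phi\Lambda_0,\phi\Lambda_0]$ using the graded Leibniz rule of the Schouten--Nijenhuis bracket with respect to the wedge product (treating multiplication by $\phi$ as wedging with a degree-zero multivector), together with the elementary identities $[\Lambda_0,f]=\iota_{df}\Lambda_0$ for a function $f$ and $[\Lambda_0,\Lambda_0]=0$. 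This collapses to $[\Lambda,\Lambda]=2\phi\,(\iota_{d\phi}\Lambda_0)\wedge\Lambda_0$; substituting $d\phi=-\phi\,d\sigma_\alpha$ gives $\iota_{d\phi}\Lambda_0=-\iota_{d\sigma_\alpha}(\phi\Lambda_0)=-Z$, whence $[\Lambda,\Lambda]=-2\,Z\wedge(\phi\Lambda_0)=-2\,Z\wedge\Lambda$, as required.

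For the second identity I would expand $[Z,\Lambda]=[\phi W,\phi\Lambda_0]$ by the same Leibniz rules. Two terms vanish for structural reasons: the term containing $[\phi W,\phi]=\phi\,W(\phi)$ dies because $W(\sigma_\alpha)=\Lambda_0(d\sigma_\alpha,d\sigma_\alpha)=0$ by skew-symmetry (hence $W(\phi)=-\phi\,W(\sigma_\alpha)=0$), and the term containing $(\iota_{d\phi}\Lambda_0)\wedge W=-\phi\,W\wedge W$ dies because $W\wedge W=0$. What remains is $[Z,\Lambda]=\phi^{2}[\Lambda_0,W]$. The key point now is that, since $\sigma_\alpha$ is a genuine (local) function, $W=\iota_{d\sigma_\alpha}\Lambda_0$ is precisely the Hamiltonian vector field $X_{\sigma_\alpha}$ of the Poisson structure $\Lambda_0$, and a Hamiltonian vector field preserves its Poisson bivector, $[W,\Lambda_0]=0$ --- which is exactly the content of the Jacobi identity \eqref{Poisson-cond} for $\Lambda_0$. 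Hence $[Z,\Lambda]=0$. Having checked \eqref{ident-Jac} on every chart, we conclude that $(\Lambda,Z)$ is a Jacobi pair, so the locally conformally Poisson manifold is a Jacobi manifold.

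The main obstacle I anticipate is purely bookkeeping: keeping the graded signs in the Schouten--Nijenhuis Leibniz rule straight through the expansions of $[\phi\Lambda_0,\phi\Lambda_0]$ and $[\phi W,\phi\Lambda_0]$, and handling the ``function $\times$ multivector'' manipulations carefully, since a sign slip would corrupt the coefficient $-2$ or the vanishing in \eqref{ident-Jac}. A tidier alternative that avoids some of this is to work at the level of brackets: using \eqref{gluedPoisson} one checks that the rescaled local brackets $e^{\sigma_\alpha}\{\,\cdot\,,\,\cdot\,\}_\alpha$ glue to a global bracket of the form \eqref{bra-Jac} (which automatically makes it a first-order bidifferential operator, so the weak Leibniz rule holds), and then the Jacobi identity follows because on each chart the glued bracket is the conformal rescaling $F,H\mapsto e^{\sigma_\alpha}\{e^{-\sigma_\alpha}F,e^{-\sigma_\alpha}H\}_\alpha$ of a Poisson bracket, a transformation that manifestly preserves the Jacobi identity; by the remark following \eqref{bra-Jac} this is equivalent to \eqref{ident-Jac}.
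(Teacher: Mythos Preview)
Your argument is correct. For the first Jacobi identity $[\Lambda,\Lambda]=-2Z\wedge\Lambda$ your expansion of $[\phi\Lambda_0,\phi\Lambda_0]$ is the same Schouten--Nijenhuis computation as the paper's, only run in the opposite direction: the paper starts from $0=[\Lambda_\alpha,\Lambda_\alpha]=[e^{\sigma_\alpha}\Lambda|_\alpha,e^{\sigma_\alpha}\Lambda|_\alpha]$, expands, and reads off the identity for $\Lambda|_\alpha$.

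For the second identity $[Z,\Lambda]=0$ your route is genuinely different. You reduce to $[W,\Lambda_0]=0$ with $W=\iota_{d\sigma_\alpha}\Lambda_0=X_{\sigma_\alpha}$ and invoke the standard fact that Hamiltonian vector fields preserve the Poisson bivector. The paper instead works globally with the contraction identity $[\iota_\theta\Lambda,\Lambda]=-\tfrac12\,\iota_\theta[\Lambda,\Lambda]$ (equation~\eqref{iota-SN-2-2-X+} in the appendix), substitutes the already-established first identity, and then kills $\iota_\theta(Z\wedge\Lambda)$ by skew-symmetry since $\iota_\theta\iota_\theta\Lambda=0$ and $\iota_\theta\Lambda\wedge\iota_\theta\Lambda=0$. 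The paper's route is slightly slicker in that it avoids re-expanding with the Leibniz rule and uses only the first identity plus a contraction formula; your route is more geometric and makes the role of the local Poisson structure explicit. Both are entirely valid, and your alternative bracket-level argument at the end (conformal rescaling preserves the Jacobi identity) is also sound and provides a clean conceptual shortcut.
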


\begin{proof} To show that $(\mathcal{P},\Lambda,Z)$ is a Jacobi manifold we need to justify the identities in \eqref{ident-Jac}. 
We start with the local Poisson bivector $\Lambda_\alpha$. Since it is Poisson, its Schouten--Nijenhuis bracket is zero, i.e, $[\Lambda_\alpha,\Lambda_\alpha]=0$. We start with this identity and do the following calculation
\begin{equation}
		\begin{split}		[\Lambda_\alpha,\Lambda_\alpha] &= [e^{\sigma_\alpha}\Lambda\vert_\alpha,e^{\sigma_\alpha}\Lambda\vert_\alpha] 
			\\
			&= e^{\sigma_\alpha}[e^{\sigma_\alpha}\Lambda\vert_\alpha,\Lambda\vert_\alpha] + \iota_{d(e^{\sigma_\alpha})}(e^{\sigma_\alpha}\Lambda\vert_\alpha)\wedge \Lambda\vert_\alpha
			\\
			&= e^{\sigma_\alpha}[\Lambda\vert_\alpha,e^{\sigma_\alpha}  \Lambda\vert_\alpha] + e^{2\sigma_\alpha}\iota_{d{\sigma_\alpha}}\Lambda\vert_\alpha\wedge \Lambda\vert_\alpha \\
                &= e^{\sigma_\alpha}\big(e^{\sigma_\alpha}[\Lambda\vert_\alpha,\Lambda\vert_\alpha] + (\iota_{d(e^{\sigma_\alpha})}\Lambda\vert_\alpha)\wedge \Lambda\vert_\alpha\big) + e^{2\sigma_\alpha}\iota_{d{\sigma_\alpha}}\Lambda\vert_\alpha\wedge \Lambda\vert_\alpha \\
			& = e^{2\sigma_\alpha}[\Lambda\vert_\alpha, \Lambda\vert_\alpha] + e^{2\sigma_\alpha}\iota_{d{\sigma_\alpha}}\Lambda\vert_\alpha\wedge \Lambda\vert_\alpha + e^{2\sigma_\alpha}\iota_{d{\sigma_\alpha}}\Lambda\vert_\alpha\wedge \Lambda\vert_\alpha \\
			&=e^{2\sigma_\alpha}[\Lambda\vert_\alpha, \Lambda\vert_\alpha] + 2e^{2\sigma_\alpha}\iota_{d{\sigma_\alpha}}\Lambda\vert_\alpha\wedge \Lambda\vert_\alpha.
		\end{split}
	\end{equation}
Since this expression is zero, we can conclude that
\begin{equation}
[\Lambda\vert_\alpha,\Lambda\vert_\alpha] =-2 \iota_{d{\sigma_\alpha}}\Lambda\vert_\alpha\wedge \Lambda\vert_\alpha.
\end{equation}
Notice that all the tensorial fields in this expression admit global realizations. That gives us the following global equality
\begin{equation}\label{LCS-Jac-1}
[\Lambda,\Lambda]=-2Z\wedge \Lambda
\end{equation}
where we have employed \eqref{LCP-vector}. This satisfies the first requirement in definition \eqref{ident-Jac}. For the second one, let us recall the Lee form $\theta$ from \eqref{theta1} then write the global expression of the vector field $Z$ defined in \eqref{LCP-vector} as
\begin{equation}\label{Z-global}
    Z = \iota_{\theta}\Lambda.
\end{equation}
Using the distributive properties of interior derivative on Schouten--Nijenhuis bracket and on wedge products, we have the following calculation: 
 \begin{equation}\label{Cals-1}
    \begin{split}
        [Z ,\Lambda]&=[\iota_{\theta}\Lambda,\Lambda]=-\frac{1}{2} \iota_{\theta} [\Lambda,\Lambda]
        \\
        &= -\frac{1}{2}\iota_{\theta}(-2Z\wedge \Lambda) = \iota_{\theta}(\iota_{\theta} \Lambda \wedge \Lambda)  \\
        &= \iota_{\theta}\iota_{\theta} \Lambda  \wedge \Lambda + \iota_{\theta} \Lambda \wedge \iota_{\theta}\Lambda = 0.
    \end{split}
	\end{equation}
The details of the calculation can be given as follows: In the first equality we wrote the global expression \eqref{Z-global} of the vector field Z. In the second equality, we used the distribution property 
    \eqref{iota-SN-2-2-X} of the right contraction on the Schouten--Nijenhuis bracket. For the third equality, we employed \eqref{LCS-Jac-1} while we used the distribution property 
    \eqref{iota-SN-2-2-X-} of the right contraction on the wedge product to arrive at the fifth equality.
In the last equation, both terms vanish identically due to the skew-symmetry. So, we can argue now that $(\mathcal{P},\Lambda,Z)$ is indeed a Jacobi manifold. \end{proof}

By Theorem~\ref{prop-LCP-Jacobi}, every locally conformal Poisson structure admits an associated Jacobi structure $(\mathcal{P},\Lambda,Z)$, with vector field $Z=\iota_\theta\Lambda$ arising canonically from the closed one-form $\theta$.

\textbf{Locally Conformal Hamiltonian Dynamics.}
Let us now concentrate on Hamiltonian dynamics on Poisson manifolds. Consider a local chart  $(U_\alpha,\Lambda_\alpha)$ and a Hamiltonian function $H_\alpha$ on this chart. We write the Hamilton's equations by  
\begin{equation}\label{geohamalpha}
X_{H_\alpha} =\iota_{dH_\alpha}\Lambda_\alpha =\Lambda^\sharp_\alpha (dH_\alpha).
\end{equation}
Here, $X_{H_{\alpha}}$ is the local Hamiltonian function associated to this framework. In order to recast the global picture of Hamilton's equation we first substitute the identities \eqref{LCP-bi-vector} and \eqref{gluefunction} into \eqref{geohamalpha}. Hence, a direct calculation turns the Hamilton's equations into the following form
\begin{equation}\label{ham-for-LCP}
    \begin{split}
        X_{H_\alpha} &=\iota_{dH_\alpha}\Lambda_\alpha = \iota_{d(e^{-\sigma_\alpha}H\vert_\alpha)}e^{\sigma_\alpha}\Lambda\vert_\alpha = \iota_{dH\vert_\alpha-H\vert_\alpha d\sigma_\alpha}\Lambda\vert_\alpha \\ 
        &= \iota_{dH\vert_\alpha}\Lambda\vert_\alpha - H\vert_\alpha\iota_{ d\sigma_\alpha}\Lambda\vert_\alpha \\
        & = \Lambda^\sharp\vert_\alpha(dH\vert_\alpha) - H\vert_\alpha Z\vert_\alpha,
    \end{split}
\end{equation}
where we have employed the identification \eqref{LCP-vector}. Notice that all the terms we obtained in the last line of \eqref{ham-for-LCP} have global realizations. So, we can write
\begin{equation}\label{semiglobal}
X_{H}=\Lambda^\sharp(dH) - H Z,
\end{equation}
where $X_{H}$ is the vector field obtained by gluing all the vector fields $X_{H_{\alpha}}$. That is, we have $X_H\vert_\alpha=X_{H_{\alpha}}$.
The vector field $X_{H}$ defined in (\ref{semiglobal}) is called locally conformal Hamiltonian vector field for Hamiltonian function $H$.

It is worth noting that the Hamiltonian vector field \eqref{semiglobal} arising in our locally conformal analysis takes the same form as the Hamiltonian vector field \eqref{HamVF-Jac} in the Jacobi setting. This correspondence provides a direct dynamical manifestation of Theorem~\ref{prop-LCP-Jacobi}, which asserts that locally conformally Poisson manifolds are Jacobi.

\section{Nambu--Poisson and Nambu--Jacobi Setting}\label{sec:Nambu-PoisJac}

\subsection{Nambu--Poisson Manifolds and Hamiltonian Dynamics}
 
Consider a manifold $\mathcal{P}$ and the space $C^\infty(\mathcal{P})$ of  smooth functions on $\mathcal{P}$ equipped with $k$-multilinear mapping
\begin{equation}\label{NP-n}
\{ \bullet,\bullet,\dotsm,\bullet \}^{\rm(NP)}~:C^\infty(\mathcal{P})\times \dots \times C^\infty(\mathcal{P}) \longrightarrow C^\infty(\mathcal{P}) .
\end{equation}
We assume that the bracket is skew-symmetric, in the sense that it satisfies
\begin{equation}\label{sskew}
\{ F_1, \dotsm , F_k \}^{\mathrm{(NP)}} 
= (-1)^{\epsilon(\sigma)} 
\{ F_{\sigma(1)}, \dotsm , F_{\sigma(k)} \}^{\mathrm{(NP)}},
\end{equation}
for any $\sigma \in S_k$, the symmetric group on $k$ elements, where
$\epsilon(\sigma)$ denotes the parity of $\sigma$.
The bracket \eqref{NP-n} is called a Nambu--Poisson $k$-bracket if the following axioms hold for all 
$H,F_1 , F_2 , \dotsm , F_{2k-1}$ in $C^\infty(\mathcal{P})$:
\begin{axioms}
  \item[\textbf{NP1.}]  
  Fundamental (also called Fillippov or Takhtajan) identity
	\begin{equation} \label{NP2}
	\begin{split}
&\{ \{ F_1, \dotsm , F_{k-1}, F_k \}^{\rm(NP)}, F_{k+1}, \dotsm, F_{2k-1} \}^{\rm(NP)}  \\&\qquad +
\{ F_k, \{ F_1, \dotsm, F_{k-1}, F_{k+1} \}^{\rm(NP)}, F_{k+2}, \dotsm , F_{2k-1} \}^{\rm(NP)} \\& \qquad
 +  \ldots + \{ F_k, \dotsm ,F_{2k-2}, \{ F_1, \dotsm , F_{k-1}, F_{2k-1} \}^{\rm(NP)}\}^{\rm(NP)} \\ &\qquad \qquad
 =  \{ F_1, \dotsm , F_{k-1}, \{ F_k, \dotsm , F_{2k-1} \}^{\rm(NP)}\}^{\rm(NP)},
 \end{split}
\end{equation} 
	\item[\textbf{NP2.}]  Leibniz identity
\begin{equation}\label{NP1}
\{ H \cdot F_1, F_2, \dotsm ,F_{k} \}^{\rm(NP)}=
H \cdot\{F_1, F_2, \dotsm , F_{k} \} ^{\rm(NP)}+
\{ H, F_2, \dotsm, F_{k} \}^{\rm(NP)} \cdot F_1.
\end{equation} 
\end{axioms}
A manifold $\mathcal{P}$ equipped with a Nambu--Poisson $k$-bracket $\{\bullet,\bullet,\dotsm,\bullet\}^{\rm(NP)}$ is called a $k$-Nambu--Poisson manifold, \cite{daletskii1997leibniz,GrabMarm2000,Nakanishi1998,Namb73,Takh94}. Notice that, for $k=2$ one arrives at a Poisson manifold. In this case, \eqref{NP2} turns out to be the Jacobi identity \eqref{P2}.

\textbf{Multivector Field Realization.}
A Nambu--Poisson $k$-bracket is determined by a $k$-vector field 
$\eta^{[k]} \in \Gamma(\wedge^k T\mathcal{P})$, that is, a section of the bundle of skew-symmetric multivector fields on $\mathcal{P}$, satisfying
\begin{equation} \label{eta-intro}
\eta^{[k]}(dF_1,\dotsc,dF_k) = \{F_1,\dotsc,F_k\}^{\rm (NP)}.
\end{equation}
Such a multivector field $\eta^{[k]}$ is referred to as a Nambu--Poisson $k$-vector field.

Any $k$-bracket defined by a $k$-vector field automatically satisfies the Leibniz identity \eqref{NP1}.  
However, in order to fulfill the fundamental identity, additional conditions must be imposed on the $k$-vector field.  
In the literature, several approaches are available to verify whether a given $k$-vector field satisfies the fundamental identity.  
In this work, we adopt the following characterization,  see, for instance, \cite{GrabMarm99}.  
\begin{theorem}\label{emre}
    A $k$-vector field $\eta^{[k]}$ is a Nambu--Poisson $k$-vector field if and only if its contraction  $
\iota_{dF}\eta^{[k]}
$
is a Nambu--Poisson $(k-1)$-vector field for all smooth functions $F$ on $\mathcal{P}$.
\end{theorem}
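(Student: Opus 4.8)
The plan is to prove both implications of Theorem~\ref{emre} by induction on $k$, using the fundamental identity \eqref{NP2} as the link between a $k$-vector field and the family of its one-form contractions. For the base case, note that a $1$-vector field (vector field) $\iota_{dF}\eta^{[2]}$ is trivially ``Nambu--Poisson'' since a $1$-bracket carries no fundamental identity to check, while $k=2$ reduces to the familiar statement that $\Lambda$ is Poisson iff the Jacobi identity holds; so the content is really in passing from $(k-1)$ to $k$.

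First I would set up the computational dictionary. Writing $\eta = \eta^{[k]}$ and, for a fixed function $G$, $\eta_G := \iota_{dG}\eta^{[k]}$, the defining relation \eqref{eta-intro} gives
\begin{equation*}
\eta_G(dF_1,\dots,dF_{k-1}) = \eta(dG,dF_1,\dots,dF_{k-1}) = \{G,F_1,\dots,F_{k-1}\}^{\rm(NP)},
\end{equation*}
so the bracket of $\eta_G$ is the $(k-1)$-bracket obtained from $\{\bullet,\dots,\bullet\}^{\rm(NP)}$ by freezing one slot at $G$. The key algebraic observation is that the fundamental identity \eqref{NP2} for the $k$-bracket, when one specializes the first argument $F_1$ (or rather the ``innermost'' arguments across the two occurrences of the bracket) to the same function $G$, collapses exactly into the fundamental identity for the frozen $(k-1)$-bracket associated with $\eta_G$. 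Making this precise — identifying which specialization of which arguments in \eqref{NP2} produces the FI for $\eta_G$ — is the heart of the argument and requires carefully matching the cyclic-type sum in \eqref{NP2} term by term; this is the step I expect to be the main obstacle, since the combinatorics of the $k-1$ terms on the left together with the single right-hand term must be shown to reorganize into the corresponding $(k-2)+1$ terms of the lower-order FI after one slot is fixed, and one has to be careful with signs coming from the skew-symmetry \eqref{sskew} when moving $G$ into the first position.

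For the forward direction, assuming $\eta^{[k]}$ is Nambu--Poisson, the specialization just described shows each $\eta_G$ satisfies the fundamental identity, and since contraction of a multivector field with a one-form always yields a multivector field (hence the Leibniz identity NP2 is automatic via \eqref{eta-intro}), $\eta_G$ is a Nambu--Poisson $(k-1)$-vector field. For the converse, assume $\iota_{dF}\eta^{[k]}$ is Nambu--Poisson $(k-1)$ for every $F$; I would show the full fundamental identity \eqref{NP2} for $\eta^{[k]}$ holds by evaluating it on exact one-forms $dF_1,\dots,dF_{2k-1}$ and recognizing that every term in \eqref{NP2} is a bracket in which at least one pair of arguments can be exposed as a contraction — concretely, the expression $\{F_1,\dots,F_{k-1},\{\dots\}^{\rm(NP)}\}^{\rm(NP)}$ is governed by $\eta_{F_1}$ acting on the remaining arguments — so that the identity for $\eta^{[k]}$ is implied by the identities for the various $\eta_{F_i}$ together with skew-symmetry. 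A small technical point to handle at the end is that establishing the FI only when it is evaluated on exact forms $dF_i$ is in fact enough, because exact one-forms span the cotangent space pointwise and both sides of \eqref{NP2} are (by the Leibniz property) first-order differential operators, so the identity extends from exact to arbitrary covectors by tensoriality; alternatively one may cite the standard fact that the FI, being $C^\infty$-multilinear in an appropriate sense after the substitution, is determined by its values on differentials of coordinate functions.
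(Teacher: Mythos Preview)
The paper does not give its own proof of this theorem; it is stated with a reference to \cite{GrabMarm99} and then used as a black box. So there is no in-paper argument to compare your proposal against.

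On the merits of your sketch: the forward direction is fine in outline --- specializing the fundamental identity \eqref{NP2} by freezing one argument does produce the FI for the contracted $(k-1)$-bracket. The converse, however, has a real gap. Writing $\{\bullet,\dots,\bullet\}_G$ for the $(k-1)$-bracket obtained by fixing one slot at $G$, the FI for $\eta_G$ unpacks to the FI for $\eta^{[k]}$ \emph{only in the special case where two of the $2k-1$ arguments coincide}: the first slot of the outer bracket and the first slot of the inner bracket are both $G$. Knowing this for every $G$ does not give the full FI with $2k-1$ independent functions; polarizing in $G$ produces only a symmetrized version. Your phrase ``governed by $\eta_{F_1}$'' hides precisely this problem: the outer bracket $\{F_1,\dots,F_{k-1},\{\dots\}\}$ can indeed be written through $\eta_{F_1}$, but the \emph{inner} bracket $\{F_k,\dots,F_{2k-1}\}$ is still a full $k$-bracket and contains no $F_1$, so the nested expression has not been reduced to the $(k-1)$-level.

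The proofs in the literature (Grabowski--Marmo, and earlier Alekseevsky--Guha, Gautheron) do not proceed by this kind of bracket combinatorics. They go through the local \emph{decomposability} of Nambu--Poisson tensors for $k\geq 3$: if every contraction $\iota_{dF}\eta^{[k]}$ is Nambu--Poisson of order $k-1$, one shows $\eta^{[k]}$ is locally decomposable, and decomposable $k$-vectors satisfy the FI automatically. That structural step is the missing ingredient in your converse; without it, the induction you propose does not close.
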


We now present a corollary of Theorem \ref{emre}, by choosing $k=3$. This establishes the connection between $3$-Nambu--Poisson manifolds and Poisson manifolds and will play a key role in our subsequent analysis.

\begin{corollary}\label{emre1}
A $3$-vector field $\eta^{[3]}$ is Nambu--Poisson if and only if, for an arbitrary function $F$, its contraction
\begin{equation}\label{first-contraction}
    \Lambda = \iota_{dF}\eta^{[3]}
\end{equation}
is a Poisson bivector field.
\end{corollary}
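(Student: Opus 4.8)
The plan is to deduce the statement as the $k=3$ specialization of Theorem~\ref{emre}. Indeed, setting $k=3$ there, the characterization becomes: the $3$-vector field $\eta^{[3]}$ is Nambu--Poisson if and only if $\iota_{dF}\eta^{[3]}$ is a Nambu--Poisson $2$-vector field for every smooth function $F$ on $\mathcal{P}$. So, once Theorem~\ref{emre} is granted, the only point left to address is the identification of what a ``Nambu--Poisson $2$-vector field'' actually is.

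That identification is the second step, and it is exactly the observation already recorded after \eqref{NP1}: when $k=2$, the skew-symmetry requirement \eqref{sskew} is ordinary antisymmetry of a bilinear bracket, the Leibniz identity \eqref{NP1} reduces to \eqref{P1}, and the fundamental identity \eqref{NP2} collapses precisely to the Jacobi identity \eqref{P2}. Hence a Nambu--Poisson $2$-bracket is nothing but a Poisson bracket, and correspondingly a Nambu--Poisson $2$-vector field is precisely a Poisson bivector field in the sense of \eqref{bivec-PoissonBra}--\eqref{Poisson-cond}. Substituting this into the $k=3$ statement of Theorem~\ref{emre} gives exactly the asserted equivalence: $\eta^{[3]}$ is Nambu--Poisson iff $\Lambda = \iota_{dF}\eta^{[3]}$ as in \eqref{first-contraction} is a Poisson bivector for all $F$.

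Two small points I would make explicit. First, the phrase ``for an arbitrary function $F$'' is to be read as a universal quantifier matching the hypothesis of Theorem~\ref{emre}; the degenerate case of constant $F$ is harmless, since then $dF=0$ and $\Lambda=0$, which trivially satisfies $[\Lambda,\Lambda]=0$. Second, there is no real obstacle in this argument — its entire content is carried by Theorem~\ref{emre}, so the corollary is a bookkeeping specialization rather than a new computation. (If one instead wanted a self-contained proof, the work would shift to proving the $k=3$ case of Theorem~\ref{emre} directly, e.g.\ by unwinding the fundamental identity \eqref{NP2} for $\eta^{[3]}$ into the Jacobi identity \eqref{P2} for each $\iota_{dF}\eta^{[3]}$; but since Theorem~\ref{emre} is available to us, this is unnecessary.)
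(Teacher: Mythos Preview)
Your proposal is correct and follows exactly the paper's approach: the paper presents this corollary immediately after Theorem~\ref{emre} with the sentence ``We now present a corollary of Theorem~\ref{emre}, by choosing $k=3$,'' and offers no further proof. Your explicit identification of a Nambu--Poisson $2$-vector field with a Poisson bivector (via the reduction of \eqref{NP2} to \eqref{P2}) just spells out what the paper leaves implicit.
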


By iterating the contraction process described in Theorem \ref{emre}, one eventually reaches bivector fields.  
Thus, a $k$-bracket $\{\bullet,\dotsc,\bullet\}^{\rm (NP)}$ defines a Nambu--Poisson bracket precisely when all of its contractions with $k-2$ functions yield a Poisson bivector.  
Equivalently, the contracted multivector field—now a bivector field—must satisfy
\begin{equation}\label{condNP}
\big[\iota_{dF_3 \wedge \dotsb \wedge dF_k} \eta^{[k]}, \, \iota_{dF_3 \wedge \dotsb \wedge dF_k} \eta^{[k]}\big] = 0, \qquad \forall F_3, \dotsc, F_k   
\end{equation}
with respect to the Schouten--Nijenhuis bracket (see Appendix \ref{sec:SN}) in accordance with \eqref{Poisson-cond}. 
Accordingly, we denote a Nambu--Poisson manifold by $(\mathcal{P},\eta^{[k]})$, where the $k$-vector field $\eta^{[k]}$ satisfies \eqref{condNP}.



\textbf{Nambu Hamiltonian Dynamics.} Consider a $k$-Nambu--Poisson manifold $\mathcal{P}$. The Nambu Hamiltonian dynamics on $\mathcal{P}$ is governed by
$(k-1)$ Hamiltonian functions $(H_1,\dotsm,H_{k-1})$ in $C^\infty(\mathcal{P})$. The associated Nambu Hamilton's equation is
\begin{equation} \label{NH-Eq}
\frac {dx}{dt} =  \{ x , H_1 ,\dotsm,H_{k-1} \} ^{\rm(NP)}.
\end{equation}
For an observable $F$, the corresponding Nambu Hamiltonian vector field is defined to be
\begin{equation} \label{sorarelif}
 X_{H_1 ,\dots ,H_{k-1}}(F)=\{ F , H_1 ,\dotsm,H_{k-1} \}^{\rm(NP)}.
\end{equation}
In terms of the interior derivative (see Appendix \ref{sec:intder}), we write the Nambu Hamiltonian vector field as
\begin{equation} \label{NP-HVF}
 X_{H_1 ,\dots ,H_{k-1}}= \iota_{dH_1\wedge \dots \wedge dH_{k-1}} \eta^{[k]} = \iota_{dH_1}\dots\iota_{dH_{k-1}} \eta^{[k]} .
\end{equation}

Poisson’s theorem states that the time derivative of a Poisson bracket satisfies the product rule, as can be seen in \eqref{Poisson-theorem}.  
This property extends naturally to the Nambu--Poisson $k$-bracket.  
In particular, the time derivative of a Nambu--Poisson $k$-bracket also satisfies the product rule:
\begin{equation}\label{Poisson-theorem-k}
\frac{d}{dt}\{F_1,\dots,F_k\}^{\rm(NP)} 
=  \sum_{i=1}^k  \{F_1,\dots,F_{i-1},\frac{d F_i}{dt},F_{i+1},\dots,F_k\}^{\rm(NP)}.
\end{equation}


A solution to the Nambu Hamilton's equations of motion produces an evolution 
operator which by virtue of the Takhtajan identity preserves  
the Nambu--Poisson $k$-bracket structure. According to that, a Nambu Hamiltonian vector field preserves the Nambu--Poisson $k$-vector field $\eta^{[k]}$ that is 
\begin{equation} 
\mathcal{L}_{X_{H_1,\dots  ,H_{k-1}}} \eta^{[k]} = 0 .
\end{equation}
Further, the characteristic distribution, generated by all the Nambu Hamiltonian vector fields, is
involutive 
\begin{equation} 
\big[X_{H_1,\dots  ,H_{k-1}} ,X_{F_1,\dots  ,F_{k-1}}\big]= (-1)^{k-1}
\sum_{i=1}^{k-1}  X_{F_1,\dots ,\{H_1,\dots  ,H_{k-1},F_i\}^{\rm(NP)}, \dots , F_{k-1}}.
\end{equation}

\subsection{3-Nambu--Poisson Manifolds and Bi-Hamiltonian Dynamics}\label{sec:BiH}

We begin by recalling the notion of bi-Hamiltonian dynamics and then demonstrate how a $3$-Nambu--Poisson structure naturally gives rise to an instance of such dynamics. 

\noindent \textbf{Bi-Hamiltonian Dynamics.} On a manifold $\mathcal{P}$, assume that there exist two Poisson bivectors, denoted by $\Lambda^{1}$ and $\Lambda^{2}$. We write the corresponding Poisson brackets as $\{\cdot,\cdot\}^{1}$ and $\{\cdot,\cdot\}^{2}$, respectively. These Poisson structures are said to be  compatible if the linear pencil of bivector fields
\begin{equation}
\Lambda^1 + \,\Lambda^2, 
\end{equation}
defines a Poisson structure, see \cite{MaMo84, olver86}.  
This condition is equivalent to the vanishing of their Schouten--Nijenhuis bracket:
\begin{equation}\label{hopa}
[\Lambda^1,\Lambda^2]= 0.
\end{equation}
Equivalently, one may view compatibility as the requirement that the bracket
\begin{equation}\label{pencil}
\{F,H \} = \{F,H \}^1 + \{F,H \}^2
\end{equation}
satisfies the Jacobi identity.  

A dynamical system is called bi-Hamiltonian if it admits two distinct but compatible Hamiltonian structures, that is,
\begin{equation} \label{Biham-Eq}
\frac {dx}{dt} =  \{ x , H_1  \}^{2}, 
\qquad  
\frac {dx}{dt} =  \{ x , H_2  \}^{1}.
\end{equation}
In three dimensions, every autonomous bi-Hamiltonian system is integrable.

\noindent \textbf{Bi-Hamiltonian Dynamics on $3$-Nambu--Poisson Manifolds.} 
In this part, we concentrate on the $k=3$ case of $k$-Nambu--Poisson manifolds. Accordingly, we have a 
skew-symmetric $3$-bracket written as
\begin{equation}\label{NP-3}
\{ \bullet,\bullet,\bullet \}^{\rm(NP)}~:C^\infty(\mathcal{P})\wedge C^\infty(\mathcal{P})\wedge C^\infty(\mathcal{P}) \longrightarrow C^\infty(\mathcal{P}).
\end{equation}
In this case, we record that the fundamental identity \eqref{NP2} turns out to be
\begin{equation}\label{3-Lie-cond2} 
    \begin{split}
        \{F_1,F_2,\{F_3,F_4,F_5\}^{\rm(NP)}\}^{\rm(NP)} & = \{\{F_1,F_2,F_3\}^{\rm(NP)},F_4,F_5\}^{\rm(NP)} + \{F_3,\{F_1,F_2,F_4\}^{\rm(NP)},F_5\}^{\rm(NP)} \\
        & \qquad \qquad \qquad + \{F_3,F_4,\{F_1,F_2,F_5\}^{\rm(NP)}\}^{\rm(NP)}. 
    \end{split}
\end{equation}
As a particular instance of \eqref{eta-intro}, we determine Nambu--Poisson $3$-vector field as 
\begin{equation}
\eta^{[3]}(dF_1,dF_2,dF_3)=\{ F_1,F_2,F_3 \}^{\rm(NP)}. 
\end{equation}

We fix the second and third entries of the Nambu--Poisson $3$-bracket \eqref{NP-3} to obtain the following ($2$-)brackets:
\begin{equation}\label{newPoisson}
\{ F,H\}^1 := \{ F,H_{1},H\}^{\rm (NP)}, 
\qquad 
\{ F,H\}^2 := \{ F,H,H_{2}\}^{\rm (NP)}.
\end{equation}
Referring to the fundamental identity \eqref{3-Lie-cond2}, a direct analysis shows that both brackets $\{\cdot,\cdot\}^1$ and $\{\cdot,\cdot\}^2$ satisfy the Leibniz and Jacobi identities, and hence define Poisson brackets. Moreover, it follows that their linear pencil \eqref{pencil} is again a Poisson bracket, which implies that the two brackets are compatible Poisson structures.  

Let $\Lambda^1$ and $\Lambda^2$ denote the bivector fields associated with the Poisson brackets $\{\cdot,\cdot\}^1$ and $\{\cdot,\cdot\}^2$, respectively. Explicitly, they are defined by
\begin{equation}
    \Lambda^1(dF,dH) = \{F,H\}^1, 
    \qquad 
    \Lambda^2(dF,dH) = \{F,H\}^2.
\end{equation}
According to the definition in \eqref{newPoisson}, these bivector fields can be expressed as right contractions of the Nambu--Poisson $3$-vector field $\eta^{[3]}$:
\begin{equation}\label{NP-to-Poisson}
     \Lambda^1 = -\,\iota_{dH_1}\eta^{[3]}, 
     \qquad 
     \Lambda^2 = \iota_{dH_2}\eta^{[3]}.
\end{equation}
Moreover, from \eqref{first-contraction} it follows immediately that $\Lambda^1$ and $\Lambda^2$ commute with respect to the Schouten--Nijenhuis bracket. Consequently, both $\Lambda^1$ and $\Lambda^2$ are genuine Poisson bivectors.

For a pair $(H_1,H_2)$ of Hamiltonian functions, the Nambu Hamilton’s equation is given by
\begin{equation} \label{NH-Eq-3}
\frac{dx}{dt} = \{x , H_1 , H_2\}^{\rm (NP)}. 
\end{equation}
The corresponding Nambu Hamiltonian vector field acts on functions as
\begin{equation}
    X_{H_1 ,H_2}(F) = \{F , H_1 , H_2\}^{\rm (NP)}.
\end{equation}
In terms of the Nambu--Poisson $3$-vector field $\eta^{[3]}$, and using the right contraction notation, the Nambu Hamiltonian vector field can be written as
\begin{equation}\label{zeynep}
    X_{H_1 ,H_2} = \iota_{dH_1 \wedge dH_2}\eta^{[3]} 
    = \iota_{dH_1}\iota_{dH_2}\eta^{[3]}.
\end{equation}
Hence, fixing the Hamiltonian functions $H_{1}$ and $H_{2}$, the Nambu Hamiltonian system \eqref{NH-Eq-3} can be expressed in the bi-Hamiltonian form (see, for instance, \cite{Guha06,EsGhGu16}):
\begin{equation}\label{NH-2Ham}
    \frac{dx}{dt} = \{x ,H_{1}\}^2 = \{x ,H_2\}^1,
\end{equation}
where the compatible Poisson brackets are those defined in \eqref{newPoisson}.

\subsection{Nambu--Jacobi Manifolds and Hamiltonian Dynamics} \label{sec:NJ}

Consider a manifold $\mathcal{P}$ and a skew-symmetric (see the definition \eqref{sskew}) $k$-bracket 
\begin{equation}\label{NJ-n}
\{ \bullet,\bullet,\dotsm,\bullet \}^{\rm(NJ)}~:C^\infty(\mathcal{P})\times \dots \times C^\infty(\mathcal{P}) \longrightarrow C^\infty(\mathcal{P}) 
\end{equation}
on the space $C^\infty(\mathcal{P})$ of smooth functions on $\mathcal{P}$. 
The bracket is called a Nambu--Jacobi $k$-bracket if the following axioms are satisfied for all 
 $H, F_1 , F_2 , \ldots , F_{2k-1}$ in $C^\infty(\mathcal{P})$:
\begin{axioms}
    \item[\textbf{NJ1.}] 
    \label{item:NJ2} Fundamental (also called Fillippov or Takhtajan) identity
	\begin{equation}
	\begin{split}
&\{ \{ F_1, \dotsm , F_{k-1}, F_k \}^{\rm(NJ)}, F_{k+1}, \dotsm, F_{2k-1} \}^{\rm(NJ)}  \\& \qquad  
+\{ F_k, \{ F_1, \dotsm, F_{k-1}, F_{k+1} \}^{\rm(NJ)}, F_{k+2}, \dotsm , F_{2k-1} \}^{\rm(NJ)} \\ &\qquad 
 +  \ldots + \{ F_k, \dotsm ,F_{2k-2}, \{ F_1, \dotsm , F_{k-1}, F_{2k-1} \}^{\rm(NJ)}\}^{\rm(NJ)} \\ &\qquad \qquad 
 =  \{ F_1, \dotsm , F_{k-1}, \{ F_k, \dotsm , F_{2k-1} \}^{\rm(NJ)}\}^{\rm(NJ)}.
 \end{split}
\end{equation} 

\item[\textbf{NJ2.}] 
\label{item:NJ1} First-order linear differential operator
\begin{equation}
	\begin{split}
\{H \cdot F_1, F_2, \dotsm ,F_{k} \}^{\rm(NJ)}&=
H \cdot \{F_1, F_2, \dotsm , F_{k} \} ^{\rm(NJ)}+
\{ H, F_2, \dotsm, F_{k} \} ^{\rm(NJ)}\cdot F_1 \\& \qquad -H\cdot F_1 \cdot\{1, F_2, \dotsm, F_{k} \}^{\rm(NJ)} .
 \end{split}
\end{equation}
	
\end{axioms}
A manifold $\mathcal{P}$ equipped with a Nambu--Jacobi $k$-bracket is called $k$-Nambu--Jacobi manifold, see, for example,  \cite{GrabMarm99,Hagiwara2004}. Notice that, for $k=2$, one arrives at a Jacobi manifold.

\textbf{Multivector Field Realization.} Consider a $k$-Nambu--Jacobi manifold $\mathcal{P}$. Referring to this structure, one can define a $(k-1)$-vector field $\mathcal{E}$ and a $k$-vector field $\eta$ as follows:
\begin{equation}\label{NJ-bracket-kth}
\begin{split}
\mathcal{E}^{[k-1]}(dF_2,\dotsm,dF_k)&:=\{1,F_2,F_3,\dotsm,F_k\}^{\rm(NJ)},
\\
\eta^{[k]} (dF_1,\dotsm,dF_k)&:=\{ F_1,\dotsm,F_k \} ^{\rm(NJ)}+ \sum_{i=1}^k (-1)^i F_i~ \mathcal{E}^{[k-1]} (dF_1,\dotsm,\widehat{dF_i},\dotsm,dF_k),
\end{split}
\end{equation}
where $\widehat{dF_i}$ denotes that it is omitting. Accordingly, we denote a $k$-Nambu--Jacobi manifold by a three-tuple $(\mathcal{P},\eta^{[k]},\mathcal{E}^{[k-1]})$. 
In light of the multivector fields, the Nambu--Jacobi $k$-bracket is written as
\begin{equation}\label{bjk}
    \{ F_1,\dotsm,F_k \}^{\rm(NJ)}= \eta^{[k]} (dF_1,\dotsm,dF_k) + \sum_{i=1}^k (-1)^{i+1} F_i~ \mathcal{E}^{[k-1]} (dF_1,\dotsm,\widehat{dF_i},\dotsm,dF_k).
\end{equation}
It is evident that if $\mathcal{E}^{[k-1]}$ is identically zero then $\eta^{[k]}$ determines a Nambu--Poisson $k$-bracket  whereas if $\eta^{[k]}$ is identically zero then $\mathcal{E}^{[k-1]}$ becomes an Nambu--Poisson $(k-1)$-bracket.



    


\textbf{Contraction of $k$-brackets.}  
Let $(\mathcal{P},\eta^{[k]},\mathcal{E}^{[k-1]})$ be a $k$-Nambu--Jacobi manifold equipped with the $k$-bracket \eqref{bjk}.  
By fixing the last entry, one obtains a $(k-1)$-bracket defined by   
\begin{equation}\label{bjk1} 
     \{F_1,F_2,\dotsm,F_{k-1}\}^{F_k}   := \{F_1,F_2,\dotsm,F_{k-1},F_k\}^{\rm{(NJ)}} . 
\end{equation}
It can be shown that the $(k-1)$-bracket $\{\bullet,\dots,\bullet\}^{F_k}$ is again a Nambu--Jacobi bracket.  
The converse assertion also holds: if the $(k-1)$-bracket obtained by fixing the last entry is a Nambu--Jacobi bracket of order $(k-1)$, then the original $k$-bracket is a Nambu--Jacobi bracket as well. For a similar discussion, where the first entry of the $k$-bracket is fixed, we refer the reader to \cite{GrabMarm99}.

In a similar fashion, by fixing the last two entries of the $k$-bracket \eqref{bjk} (or, equivalently, the last entry of the $(k-1)$-bracket \eqref{bjk1}), 
one obtains a Nambu--Jacobi $(k-2)$-bracket:
\begin{equation}\label{bjk2}
   \{F_1,F_2,\dotsm,F_{k-2}\}^{F_{k-1},F_k} =  \{F_1,F_2,\dotsm,F_{k-2},F_{k-1}\}^{F_k}=  \{F_1,F_2,\dotsm,F_{k-2},F_{k-1},F_k\}^{\rm{(NJ)}}.  
\end{equation}
By iteration, one can contract to arrive at a Jacobi bracket as
\begin{equation}\label{bjkjac}
   \{F_1,F_2\}^{F_3,\dotsm,F_k} =  \{F_1,F_2,F_3,\dotsm,F_k\}^{\rm{(NJ)}} . 
\end{equation}
starting from a Nambu--Jacobi $k$-bracket. 

According to \eqref{bjk}, the Nambu--Jacobi $k$-bracket can be expressed in terms of a pair consisting of a $k$-vector field and a $(k-1)$-vector field, $(\eta^{[k]},\mathcal{E}^{[k-1]})$.  
On the other hand, in view of \eqref{bra-Jac}, there must exist a pair formed by a bivector and a vector field, $(\Lambda,Z) = (\eta^{[2]},\mathcal{E}^{[1]})$, which determines the Jacobi bracket \eqref{bjkjac}. 
The following theorem establishes the precise relationship between the $k$-Nambu--Jacobi pair $(\eta^{[k]},\mathcal{E}^{[k-1]})$ and the Jacobi pair $(\eta^{[2]},\mathcal{E}^{[1]})$. In this way, the Jacobi structure can explicitly be expressed in terms of the original $k$-vector field $\eta^{[k]}$ and $(k-1)$-vector field $\mathcal{E}^{[k-1]}$.

\begin{theorem}\label{beris}
Assume that a Nambu--Jacobi $k$-bracket is determined by the pair of multivector fields $(\eta^{[k]},\mathcal{E}^{[k-1]})$ as in \eqref{bjk}.  
Then, the Jacobi bracket \eqref{bjkjac} obtained through the iterative contraction of the $k$-bracket takes the form
\begin{equation}\label{kontra}
   \{F_1,F_2\}^{F_3,\dotsm,F_k} 
   = \eta^{[2]}(dF_1,dF_2) 
   + F_1\,\mathcal{E}^{[1]}(F_2) 
   - F_2\,\mathcal{E}^{[1]}(F_1),
\end{equation}
where the associated bivector $\eta^{[2]}$ and vector field $\mathcal{E}^{[1]}$ are given by
\begin{equation}\label{atak}
 \begin{split}
     \eta^{[2]} &= \iota_{dF_3\wedge\cdots\wedge dF_k}\eta^{[k]} 
     + \sum_{j=0}^{k-3}(-1)^{k+1-j}F_{k-j}\,
       \iota_{dF_3\wedge\cdots\wedge \widehat{dF_{k-j}}\wedge\cdots\wedge dF_k}
       \mathcal{E}^{[k-1]}, \\[0.7em]
     \mathcal{E}^{[1]} &= \iota_{dF_3\wedge\cdots\wedge dF_k}\mathcal{E}^{[k-1]} .
 \end{split}
\end{equation}
\end{theorem}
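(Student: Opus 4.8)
The plan is to prove \eqref{kontra} by a direct expansion: insert the multivector realization \eqref{bjk} of the Nambu--Jacobi $k$-bracket into the fully contracted bracket $\{F_1,F_2,F_3,\dots,F_k\}^{\rm(NJ)}$ and sort the resulting terms into three groups according to the index $i$ of the $\mathcal{E}^{[k-1]}$-sum: the $\eta^{[k]}$-term, which becomes the leading bivector; the terms with $i\in\{1,2\}$, which produce the Jacobi-type correction $F_1\,\mathcal{E}^{[1]}(F_2)-F_2\,\mathcal{E}^{[1]}(F_1)$; and the terms with $i\in\{3,\dots,k\}$, which, after reindexing, become the correction sum in the claimed formula \eqref{atak} for $\eta^{[2]}$. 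The only tool needed from the appendices is the right-contraction identity $\iota_{dG_1\wedge\cdots\wedge dG_r}T=\iota_{dG_1}\cdots\iota_{dG_r}T$ together with the evaluation rule $(\iota_{dG_1\wedge\cdots\wedge dG_r}T)(dF_1,\dots,dF_{m-r})=T(dF_1,\dots,dF_{m-r},dG_1,\dots,dG_r)$ for an $m$-vector $T$, which says that contracting with forms placed in the last slots introduces no permutation sign. One could equally argue by induction on the number of frozen slots, showing that one contraction step sends $(\eta^{[m]},\mathcal{E}^{[m-1]})$ to the appropriate lower-order pair and then composing, but the direct expansion is shorter.

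First I would dispose of the $\eta^{[k]}$-term: by the evaluation rule, $\eta^{[k]}(dF_1,\dots,dF_k)=(\iota_{dF_3\wedge\cdots\wedge dF_k}\eta^{[k]})(dF_1,dF_2)$, which is the first summand of $\eta^{[2]}$ in \eqref{atak}. For the $\mathcal{E}^{[k-1]}$-sum, the $i=1$ term carries the sign $(-1)^{1+1}=+1$ and equals $F_1\,\mathcal{E}^{[k-1]}(dF_2,dF_3,\dots,dF_k)=F_1\,(\iota_{dF_3\wedge\cdots\wedge dF_k}\mathcal{E}^{[k-1]})(dF_2)=F_1\,\mathcal{E}^{[1]}(F_2)$, with $\mathcal{E}^{[1]}$ as in \eqref{atak}; the $i=2$ term carries the sign $(-1)^{2+1}=-1$ and likewise equals $-F_2\,\mathcal{E}^{[1]}(F_1)$. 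As a cross-check one may instead recover $\mathcal{E}^{[1]}$ from the constant function, $\mathcal{E}^{[1]}(F)=\{1,F,F_3,\dots,F_k\}^{\rm(NJ)}$, since every term of \eqref{bjk} other than $i=1$ then drops out because it pairs $\eta^{[k]}$ or $\mathcal{E}^{[k-1]}$ against $d1=0$.

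It then remains to collect the terms of the $\mathcal{E}^{[k-1]}$-sum with $i\in\{3,\dots,k\}$. For each such $i$, the evaluation rule gives $\mathcal{E}^{[k-1]}(dF_1,dF_2,dF_3,\dots,\widehat{dF_i},\dots,dF_k)=(\iota_{dF_3\wedge\cdots\wedge\widehat{dF_i}\wedge\cdots\wedge dF_k}\mathcal{E}^{[k-1]})(dF_1,dF_2)$, so this partial sum reads $\sum_{i=3}^{k}(-1)^{i+1}F_i\,(\iota_{dF_3\wedge\cdots\wedge\widehat{dF_i}\wedge\cdots\wedge dF_k}\mathcal{E}^{[k-1]})(dF_1,dF_2)$; the substitution $i=k-j$ (so that $j$ runs from $0$ to $k-3$ and $(-1)^{i+1}=(-1)^{k+1-j}$) turns it into exactly the correction sum in \eqref{atak} evaluated on $(dF_1,dF_2)$. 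Adding the three groups gives \eqref{kontra} with $\eta^{[2]},\mathcal{E}^{[1]}$ as stated; since each summand of $\eta^{[2]}$ is a genuine bivector and $\mathcal{E}^{[1]}$ a genuine vector field, and since the contracted bracket $\{\bullet,\bullet\}^{F_3,\dots,F_k}$ is Jacobi by the iterated-contraction property recalled before the statement, uniqueness of the decomposition \eqref{bra-Jac} identifies $(\eta^{[2]},\mathcal{E}^{[1]})$ with its Jacobi pair.

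I expect the argument to be routine; the only genuinely delicate points are confirming the no-sign evaluation rule when an interior differential $dF_i$ is omitted from the contracting wedge, and matching the combinatorial sign $(-1)^{i+1}$ of \eqref{bjk} with the sign $(-1)^{k+1-j}$ of \eqref{atak} through the substitution $j=k-i$. Everything else is linear bookkeeping in the arguments $F_1,\dots,F_k$.
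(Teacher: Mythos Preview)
Your argument is correct. You expand \eqref{bjk} once and sort the terms by the index $i$; this is a valid and clean derivation of \eqref{kontra}--\eqref{atak}, and the sign bookkeeping via $j=k-i$ is right.

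The paper proceeds differently: it carries out the contraction one step at a time. First it fixes $F_k$ and shows that the resulting $(k-1)$-bracket is again of Nambu--Jacobi form with the pair
\[
\eta^{[k-1]} = \iota_{dF_k}\eta^{[k]} + (-1)^{k+1}F_k\,\mathcal{E}^{[k-1]},
\qquad
\mathcal{E}^{[k-2]} = \iota_{dF_k}\mathcal{E}^{[k-1]},
\]
then repeats for $F_{k-1}$, and writes down the general $(k-l)$-level pair before finally specializing to $l=k-2$. Your direct expansion is shorter and avoids the recursion altogether. What the paper's route buys is that the intermediate one-step transition $(\eta^{[m]},\mathcal{E}^{[m-1]})\mapsto(\eta^{[m-1]},\mathcal{E}^{[m-2]})$ is made explicit; this formula is reused verbatim later (in the proof of Theorem~\ref{memre2} and in the large commutative diagram), so for the paper's purposes the stepwise version is not wasted effort. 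You acknowledge this alternative yourself, so there is no gap---just a stylistic trade-off between a self-contained one-shot computation and an inductive argument that feeds downstream results.
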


\begin{proof}
We start with the first contraction by substituting the explicit expression \eqref{bjk} of the $k$-bracket into \eqref{bjk1}, and compute
\begin{equation}\label{NJ-contraction}
\begin{split}
     \{F_1,\dots,F_{k-1}\}^{F_k} 
     &= \{F_1,\dots,F_k\}^{\rm{(NJ)}} \\
    &= \eta^{[k]} (dF_1,\dotsm,dF_k) 
    + \sum_{i=1}^k (-1)^{i+1} F_i~ 
    \mathcal{E}^{[k-1]} (dF_1,\dotsm,\widehat{dF_i},\dotsm,dF_k) \\
    &= \iota_{dF_k} \eta^{[k]} (dF_1,\dotsm,dF_{k-1}) \\
    &\quad+ \sum_{i=1}^{k-1} (-1)^{i+1} F_i~ 
    \iota_{dF_k}\mathcal{E}^{[k-1]} (dF_1,\dotsm,\widehat{dF_i},\dotsm,dF_{k-1}) \\
    &\quad + (-1)^{k+1}F_k\mathcal{E}^{[k-1]}(dF_1,\dotsm,dF_{k-1})\\
    &= \Big(\iota_{dF_k} \eta^{[k]} + (-1)^{k+1}F_k\mathcal{E}^{[k-1]} \Big)(dF_1,\dotsm,dF_{k-1}) \\
    &\quad + \sum_{i=1}^{k-1} (-1)^{i+1} F_i~ (\iota_{dF_k}\mathcal{E}^{[k-1]} )(dF_1,\dotsm,\widehat{dF_i},\dotsm,dF_{k-1}).
\end{split}
\end{equation}
Referring to the last line, we recognize that the $(k-1)$-bracket $\{\bullet,\dots,\bullet\}^{F_k}$ can be written in terms of a $(k-1)$-vector field $\eta^{[k-1]}$ and a $(k-2)$-vector field $\mathcal{E}^{[k-2]}$ as
\begin{equation}\label{NJ-contraction-new}
\begin{split}
    \{F_1,\dots,F_{k-1}\}^{F_k} 
    &= \eta^{[k-1]}(dF_1,\dotsm,dF_{k-1}) \\
    &\quad + \sum_{i=1}^{k-1} (-1)^{i+1} F_i~ 
    \mathcal{E}^{[k-2]} (dF_1,\dotsm,\widehat{dF_i},\dotsm,dF_{k-1}),
\end{split}
\end{equation}
where
\begin{equation}\label{gomis}
    \eta^{[k-1]} = \iota_{dF_k} \eta^{[k]} + (-1)^{k+1}F_k\mathcal{E}^{[k-1]}, 
    \qquad 
    \mathcal{E}^{[k-2]} = \iota_{dF_k}\mathcal{E}^{[k-1]}.
\end{equation}
Hence, the pair $(\eta^{[k]},\mathcal{E}^{[k-1]})$ defines a Nambu--Jacobi structure of order $k$ if and only if the pair $(\eta^{[k-1]},\mathcal{E}^{[k-2]})$ defines a Nambu--Jacobi structure of order $(k-1)$. Compactly,
\begin{equation}
    (\eta^{[k]},\mathcal{E}^{[k-1]})
    \;\mapsto\;
    (\eta^{[k-1]},\mathcal{E}^{[k-2]}),
\end{equation}
with the new multivectors given by \eqref{gomis}.

Applying the same procedure to the $(k-1)$-bracket \eqref{NJ-contraction-new} by fixing the last entry yields
\begin{equation}\label{NJ-contraction-order}
    \begin{split}
        \{F_1,\dots,F_{k-2}\}^{F_{k-1},F_k} 
        &= \Big(\iota_{dF_{k-1}}\eta^{[k-1]} + (-1)^{k}F_{k-1}\mathcal{E}^{[k-2]}\Big)(dF_1,\dotsm,dF_{k-2}) \\
        &\quad + \sum_{i=1}^{k-2} (-1)^{i+1} F_i~ 
        \iota_{dF_{k-1}}\mathcal{E}^{[k-2]} (dF_1,\dotsm,\widehat{dF_i},\dotsm,dF_{k-2}).
    \end{split}
\end{equation}
Thus, the $(k-2)$-bracket is a Nambu--Jacobi bracket of order $(k-2)$ if and only if the $(k-1)$-bracket is Nambu--Jacobi. The corresponding multivector fields are
\begin{equation}
\begin{split}
     \eta^{[k-2]} 
     &= \iota_{dF_{k-1}}\eta^{[k-1]} + (-1)^{k}F_{k-1}\mathcal{E}^{[k-2]} \\
     &= \iota_{dF_{k-1}\wedge dF_k} \eta^{[k]} 
     + \Big((-1)^{k+1}F_k\iota_{dF_{k-1}} + (-1)^kF_{k-1}\iota_{dF_k}\Big)\mathcal{E}^{[k-1]}, \\[0.5em]
    \mathcal{E}^{[k-3]} &= \iota_{dF_{k-1}}\mathcal{E}^{[k-2]} 
    = \iota_{dF_{k-1}}\iota_{dF_k}\mathcal{E}^{[k-1]}.
\end{split}
\end{equation}

Continuing recursively, one obtains the general $(k-l)$-bracket
\begin{equation}\label{fener}
\begin{split}
    \{F_1,\dots,F_{k-l}\}^{F_{k-l+1},\dots,F_k} 
    &= \eta^{[k-l]}(dF_1,\dotsm,dF_{k-l}) \\
    &\quad + \sum_{i=1}^{k-l} (-1)^{i+1} F_i~ 
    \mathcal{E}^{[k-l-1]}(dF_1,\dotsm,\widehat{dF_i},\dotsm,dF_{k-l}),
\end{split}
\end{equation}
with multivectors
\begin{equation}\label{fener+}
\begin{split}
    \eta^{[k-l]} &= \iota_{dF_{k-l+1}\wedge\cdots\wedge dF_k}\eta^{[k]} 
    + \sum_{j=0}^{l-1}(-1)^{k+1-j}F_{k-j}\,
    \iota_{dF_{k-l+1}\wedge\cdots\wedge \widehat{dF_{k-j}}\wedge\cdots\wedge dF_k}\mathcal{E}^{[k-1]}, \\[0.5em]
    \mathcal{E}^{[k-l-1]} &= \iota_{dF_{k-l+1}\wedge\cdots\wedge dF_k}\mathcal{E}^{[k-1]}.
\end{split}
\end{equation}

Finally, by taking $l=k-2$ in \eqref{fener}--\eqref{fener+}, one arrives at the Jacobi ($2$-)bracket \eqref{kontra}, determined by the bivector $\eta^{[2]}$ and the vector field $\mathcal{E}^{[1]}$ as in \eqref{atak}. This completes the proof.

\end{proof}

As a particular instance, in the following corollary, we write Theorem \ref{beris} for $k=3$, which establishes the connection between $3$-Nambu--Jacobi manifolds and Jacobi manifolds. We will use this in upcoming analysis.

\begin{corollary}\label{babanis}
Consider the following Nambu--Jacobi $3$-bracket
\begin{equation}
\begin{split}
   \{F_1,F_2,F_3\}^{\rm{(NJ)}} & =
         \eta^{[3]}(dF_1,dF_2,dF_3) +F_1\mathcal{E}^{[2]}(dF_2,dF_3) -F_2\mathcal{E}^{[2]}(dF_1,dF_3) \\
         & \qquad \qquad +F_3\mathcal{E}^{[2]}(dF_1,dF_2)  
\end{split}
\end{equation}
generated by the Nambu--Jacobi pair  $(\eta^{[3]},\mathcal{E}^{[2]})$. 
Then, the Jacobi bracket   obtained through the contraction is
\begin{equation}\label{kontra1}
     \{F_1,F_2\}^{F_3} 
   = \Lambda(dF_1,dF_2) 
   + F_1Z(F_2) 
   - F_2Z(F_1),
\end{equation}
where the associated Jacobi pair $(\Lambda,Z)$ is given by
\begin{equation}\label{atakta} 
     \Lambda  = \iota_{dF_3}\eta^{[3]} 
     + F_3    \mathcal{E}^{[2]},\qquad  Z= \iota_{dF_3}\mathcal{E}^{[2]}. 
\end{equation}    
\end{corollary}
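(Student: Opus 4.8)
The plan is to read the statement off from Theorem~\ref{beris} by specializing to $k=3$: in that case the iterative contraction of the $k$-bracket \eqref{bjk} down to a Jacobi bracket consists of a single step, i.e.\ $l=k-2=1$ in \eqref{fener}--\eqref{fener+}. Since the computation is then very short, I would prefer to carry it out directly rather than merely quote the general formula, so that the corollary is self-contained.

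First I would substitute the explicit form of the Nambu--Jacobi $3$-bracket,
\[
\{F_1,F_2,F_3\}^{\rm(NJ)} = \eta^{[3]}(dF_1,dF_2,dF_3) + F_1\,\mathcal{E}^{[2]}(dF_2,dF_3) - F_2\,\mathcal{E}^{[2]}(dF_1,dF_3) + F_3\,\mathcal{E}^{[2]}(dF_1,dF_2),
\]
into the definition $\{F_1,F_2\}^{F_3} := \{F_1,F_2,F_3\}^{\rm(NJ)}$ of the contracted bracket \eqref{bjk1}. Next I would rewrite the terms in which $F_3$ enters only through $dF_3$ as right contractions: using the interior-product conventions of Appendix~\ref{sec:intder}, $\eta^{[3]}(dF_1,dF_2,dF_3) = (\iota_{dF_3}\eta^{[3]})(dF_1,dF_2)$ and, by skew-symmetry of $\mathcal{E}^{[2]}$, $\mathcal{E}^{[2]}(dF_2,dF_3) = (\iota_{dF_3}\mathcal{E}^{[2]})(dF_2)$ and $\mathcal{E}^{[2]}(dF_1,dF_3) = (\iota_{dF_3}\mathcal{E}^{[2]})(dF_1)$ (note the overall sign $(-1)^{k+1}$ appearing in \eqref{gomis} equals $+1$ for $k=3$).

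Then I would collect terms. The contribution $(\iota_{dF_3}\eta^{[3]})(dF_1,dF_2)$ together with $F_3\,\mathcal{E}^{[2]}(dF_1,dF_2)$ assembles into $\Lambda(dF_1,dF_2)$ with $\Lambda = \iota_{dF_3}\eta^{[3]} + F_3\,\mathcal{E}^{[2]}$, while $F_1\,\mathcal{E}^{[2]}(dF_2,dF_3) - F_2\,\mathcal{E}^{[2]}(dF_1,dF_3)$ becomes $F_1 Z(F_2) - F_2 Z(F_1)$ with $Z = \iota_{dF_3}\mathcal{E}^{[2]}$. This is exactly the announced pair \eqref{atakta}, and the resulting bracket has the shape \eqref{bra-Jac} of a Jacobi bracket. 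To finish, I would invoke the general contraction property recorded just before Theorem~\ref{beris}: fixing the last entry of a Nambu--Jacobi $k$-bracket produces a Nambu--Jacobi $(k-1)$-bracket, and a Nambu--Jacobi $2$-bracket is precisely a Jacobi bracket; hence the pair $(\Lambda,Z)$ automatically satisfies the Jacobi-pair identities \eqref{ident-Jac}.

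The only delicate point is the bookkeeping of signs — which slot $\iota_{dF_3}$ contracts, the sign $(-1)^{i+1}$ in \eqref{bjk}, and the skew-symmetry of $\mathcal{E}^{[2]}$ — but for $k=3$ all of these collapse to $+1$, so no genuine obstacle remains beyond careful accounting.
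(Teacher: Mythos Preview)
Your proposal is correct and follows essentially the same approach as the paper: the corollary is stated there without separate proof, as the immediate specialization of Theorem~\ref{beris} to $k=3$ (i.e.\ $l=k-2=1$ in \eqref{fener}--\eqref{fener+}), and your direct computation is precisely the first step \eqref{NJ-contraction}--\eqref{gomis} of that proof carried out for $k=3$. The sign bookkeeping you flag is handled correctly, and your final appeal to the contraction property before Theorem~\ref{beris} to conclude that $(\Lambda,Z)$ is a genuine Jacobi pair is exactly what the paper intends.
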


\textbf{Nambu--Jacobi Hamiltonian Vector Field.} Start with a $k$-Nambu--Jacobi manifold $(\mathcal{P}, \eta, \mathcal{E})$. For a collection $(H_{1}, \dotsc, H_{k-1})$ of Hamiltonian functions, the associated Nambu--Jacobi Hamiltonian vector field is defined by \cite{IbanezLopezMarreroPadron2001}
\begin{equation}\label{NJHVF}
X_{H_{1} \cdots H_{k-1}}
= X_{H_{1} \cdots H_{k-1}}^{\eta}
+ \sum_{i=1}^{k-1}(-1)^{i} H_{i}
X_{H_{1} \dotsm \widehat{H_i} \dotsm H_{k-1}}^{\mathcal{E}},
\end{equation}
where the components $X_{H_{1} \cdots H_{k-1}}^{\eta}$ and $X_{H_{1} \dotsm \widehat{H_i} \dotsm H_{k-1}}^{\mathcal{E}}$ are obtained by contraction of the multivector fields with the Hamiltonians:
\begin{equation}
X_{H_{1} \cdots H_{k-1}}^{\eta}
= \iota_{dH_{1} \wedge\cdots\wedge dH_{k-1}} \eta^{[k]},
\qquad
X_{H_{1} \dotsm \widehat{H_i} \dotsm H_{k-1}}^{\mathcal{E}}
= \iota_{dH_{1}\wedge \dotsm \wedge\widehat{dH_i}\wedge\dotsm dH_{k-1}} \mathcal{E}^{[k-1]}.
\end{equation}
Accordingly, the Nambu--Jacobi Hamiltonian dynamics associated with the Hamiltonian functions $(H_1,\dots,H_{k-1})$ is given by
\begin{equation}\label{ruzgar}
    \frac{dx}{dt}= \iota_{dH_{1} \wedge\cdots\wedge dH_{k-1}} \eta^{[k]} +  \sum_{i=1}^{k-1}(-1)^{i} H_{i}\,\iota_{dH_{1}\wedge \dotsm \wedge\widehat{dH_i}\wedge\dotsm dH_{k-1}} \mathcal{E}^{[k-1]}.
\end{equation}

Two immediate consequences follow. If $\mathcal{E}^{[k-1]}$ is zero, then $X_{H_{1} \dotsm \widehat{H_i} \dotsm H_{k-1}}^{\mathcal{E}}$ vanishes identically, and one is left only with $X_{H_{1} \cdots H_{k-1}}^{\eta}$. This is precisely the Hamiltonian vector field \eqref{NP-HVF} on the $k$-Nambu--Poisson manifold $(\mathcal{P},\eta^{[k]})$. 
Conversely, if $X_{H_{1} \cdots H_{k-1}}^{\eta}$ is zero, then the Hamiltonian vector field reduces to $X_{H_{1} \dotsm H_{k-2}}^{\mathcal{E}}$, which coincides with the Hamiltonian vector field on the $(k-1)$-Nambu--Poisson manifold $(\mathcal{P},\mathcal{E}^{[k-1]})$.

The components $X_{H_{1} \cdots H_{k-1}}^{\eta}$ and $X_{H_{1} \cdots H_{k-2}}^{\mathcal{E}}$ satisfy the followings:
\begin{equation}
\begin{split}
    \mathcal{L}_{X_{H_{1} \cdots H_{k-2}}^{\mathcal{E}}} \eta^{[k]} & =0,
    \\ 
    \mathcal{L}_{X_{H_{1} \cdots H_{k-1}}^{\eta}} \mathcal{E}^{[k-1]} &=(-1)^{k-1} \iota_{\mathrm{~d}\mathcal{E}(d H_{1}, \dotsm, dH_{k-1})} \eta^{[k]} .
\end{split}
\end{equation}
Here, the first equality expresses that the multivector field $ \eta^{[k]}$ is invariant under the flow generated by the component of the Hamiltonian vector field associated with $\mathcal{E}^{[k-1]} $. 
The second condition, 
describes how $\mathcal{E}^{[k-1]} $ transforms under the flow of the component of the Hamiltonian vector field associated with $ \eta^{[k]}$. More precisely, the Lie derivative of $\mathcal{E}^{[k-1]} $ along $X^{\eta}$ does not vanish but is governed by the contraction of $ \eta^{[k]}$ with the differential of the Hamiltonian functions evaluated on $\mathcal{E}^{[k-1]} $. Together, these compatibility conditions encode the mutual consistency of the pair $(\eta^{[k]},\mathcal{E}^{[k-1]})$ that defines the Nambu--Jacobi structure. 


\subsection{3-Nambu--Jacobi Manifolds and Jacobi Bi-Hamiltonian Dynamics}\label{ebru}

Our discussion begins with the notion of bi-Hamiltonian dynamics on Jacobi manifolds endowed with compatible Jacobi structures. We then demonstrate how a $3$-Nambu--Jacobi structure naturally decomposes into two compatible Jacobi structures.

\noindent	\textbf{Jacobi Bi-Hamiltonian Dynamics.} Assume that there are two Jacobi structures $(\Lambda^1, Z^1)$ and $(\Lambda^2, Z^2)$ defined on a manifold $\mathcal{P}$. Referring to \eqref{bra-Jac}, we denote their corresponding Jacobi brackets  by $\{\bullet,\bullet\}^{1}$ and $\{\bullet,\bullet\}^{2}$, respectively. These structures are compatible if the sum of the tensor fields also defines a Jacobi structure, \cite{MaMoPa,Nunes98}. That is, if the pair 
\begin{equation}\label{cond-Jac-comp}
\left(\Lambda^1+\Lambda^2, Z^1+Z^2\right)
\end{equation}
satisfies the conditions given in \eqref{ident-Jac}. The two necessary and sufficient conditions for the compatibility of the two Jacobi structures are 
\begin{equation}\label{dogu}
\begin{split}
&\left[\Lambda^1, \Lambda^2\right] =Z^1 \wedge \Lambda^2+Z^2 \wedge \Lambda^1,
\\
&\left[Z^1, \Lambda^2\right] +\left[Z^2, \Lambda^1\right] =0.
\end{split}
\end{equation}
Observe that if $Z^1$ and $Z^2$ both vanish, then one recovers the case of two compatible Poisson structures summarized in Subsection \ref{sec:BiH}. In fact, under this assumption, the first condition \eqref{dogu} reduces to \eqref{hopa}, while the second condition is automatically satisfied.

For the Jacobi pair $\left(\Lambda^1+\Lambda^2, Z^1+Z^2\right)$ obtained from a compatible Jacobi pairs, the  Jacobi bracket is computed to be 
\begin{equation}
\{F,H\}  =\{F,H\} ^{1}+\{F,H\} ^{2}.
 \end{equation}

A dynamical system is called Jacobi bi-Hamiltonian if there exist two non-trivial compatible Jacobi structures $(\Lambda^1, Z^1)$ and $(\Lambda^2, Z^2)$ together with Hamiltonian functions $H_2$ and $H_1$ such that 
\begin{equation}
\frac{dx}{dt}= (\Lambda^1)^\sharp(d H_2)-H_2 Z^1,\qquad  \frac{dx}{dt}= (\Lambda^2)^\sharp(d H_1)-H_1 Z^2.
\end{equation}
Notice that for the Poisson case, we have the bi-Hamiltonian dynamics in \eqref{Biham-Eq}.

\noindent \textbf{Jacobi Bi-Hamiltonian Dynamics on $3$-Nambu--Jacobi Manifolds.} We examine the $k=3$ case for the Nambu--Jacobi manifold theory in Subsection \ref{sec:NJ}. In this case, on a manifold $\mathcal{P}$, we have a 
Nambu--Jacobi $3$-bracket 
\begin{equation}\label{NJ-3}
\{ \bullet,\bullet,\bullet \}^{\rm(NJ)}~:C^\infty(\mathcal{P})\wedge C^\infty(\mathcal{P})\wedge C^\infty(\mathcal{P}) \longrightarrow C^\infty(\mathcal{P}).
\end{equation}
Here, the fundamental identity retains the same structural form as in \eqref{3-Lie-cond2}, whereas the requirement of being a first-order linear differential operator takes the form
\begin{equation}
	\begin{split}
\{HF_1, F_2, F_3 \}^{\rm(NJ)}&=
H \{F_1, F_2, F_3 \} ^{\rm(NJ)}+
\{ H, F_2, F_3 \} ^{\rm(NJ)}F_1 - HF_1\{1,F_2, F_3\}^{\rm(NJ)} ,
 \end{split}
\end{equation}
For $k=3$, we can write the multivector fields realization in \eqref{NJ-bracket-kth} as  
\begin{equation}
   \begin{split}
        \mathcal{E}^{[2]}(dF_2,dF_3) & = \{1,F_2,F_3\}^{\rm(NJ)}, \\
         \eta^{[3]}(dF_1,dF_2,dF_3)& = \{F_1,F_2,F_3\}^{\rm(NJ)} - F_1\mathcal{E}^{[2]}(dF_2,dF_3) + F_2\mathcal{E}^{[2]}(dF_1,dF_3) \\ &\qquad \qquad- F_3\mathcal{E}^{[2]}(dF_1,dF_2).
   \end{split}
\end{equation}
Hence, the explicit form of the  Nambu--Jacobi $3$-bracket by a Nambu--Jacobi pair $(\eta^{[3]}, \mathcal{E}^{[2]})$ is written as
\begin{equation}\label{br-3-NJ}
 \begin{split}
     \{F ,H_1,H_2\} ^{\rm(NJ)}&= \eta^{[3]}(dF ,dH_1,dH_2) + F \mathcal{E}^{[2]}(dH_1,dH_2) - H_1\mathcal{E}^{[2]}(dF ,dH_2) \\ &\qquad \qquad + H_2\mathcal{E}^{[2]}(dF ,dH_1).
      \end{split}
\end{equation}

Fixing the second Hamiltonian function $H_2$ in the Nambu--Jacobi $3$-bracket
\eqref{br-3-NJ} induces the $2$-bracket
\begin{equation}\label{jack-2}
\begin{split}
\{F,H_1\}^{2}
&= \big(\iota_{dH_2}\eta^{[3]} + H_2\mathcal{E}^{[2]}\big)(dF,dH_1)
 + F\,\iota_{dH_2}\mathcal{E}^{[2]}(dH_1)
 - H_1\,\iota_{dH_2}\mathcal{E}^{[2]}(dF).
\end{split}
\end{equation}
By Corollary~\ref{babanis}, this contraction defines a Jacobi bracket of the
form \eqref{bra-Jac}, with associated Jacobi pair
\begin{equation}\label{dogu-1}
\Lambda^{2} = \iota_{dH_2}\eta^{[3]} + H_2\mathcal{E}^{[2]},
\qquad
Z^{2} = \iota_{dH_2}\mathcal{E}^{[2]}.
\end{equation}

Similarly, fixing the first Hamiltonian function $H_1$ in the
Nambu--Jacobi $3$-bracket \eqref{br-3-NJ} yields the $2$-bracket
\begin{equation}\label{jack-1}
\{F,H_2\}^{1}
= -\big(\iota_{dH_1}\eta^{[3]} + H_1\mathcal{E}^{[2]}\big)(dF,dH_2)
 - F\,\iota_{dH_1}\mathcal{E}^{[2]}(dH_2)
 + H_2\,\iota_{dH_1}\mathcal{E}^{[2]}(dF).
\end{equation}
Again by Corollary~\ref{babanis}, the corresponding Jacobi structure is
determined by the pair
\begin{equation}\label{dogu-2}
\Lambda^{1} = -\iota_{dH_1}\eta^{[3]} - H_1\mathcal{E}^{[2]},
\qquad
Z^{1} = -\iota_{dH_1}\mathcal{E}^{[2]},
\end{equation}
and the bracket \eqref{jack-1} is a Jacobi bracket of the form
\eqref{bra-Jac}.

Thus, by contracting the Nambu--Jacobi $3$-bracket \eqref{br-3-NJ} with $H_2$ and $H_1$, we obtain the Jacobi structures $(\Lambda^2,Z^2)$ in \eqref{dogu-1} and $(\Lambda^1,Z^1)$ in \eqref{dogu-2}, respectively. The following theorem establishes that these two Jacobi structures are compatible. We cite \cite{IbanezLeonPadron98} not only for the following theorem but also for additional relationships between higher order brackets and compatible Jacobi pairs.
\begin{theorem}
The Jacobi pairs  $(\Lambda^2,Z^2)$ in \eqref{dogu-1} and $(\Lambda^1,Z^1)$ in \eqref{dogu-2} obtained by the contraction of the Nambu--Jacobi $3$-bracket are compatible. 
\end{theorem}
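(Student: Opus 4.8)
The plan is to obtain the statement almost immediately from Corollary~\ref{babanis}, without ever manipulating Schouten--Nijenhuis brackets directly. The crucial observation is that the sum $(\Lambda^1+\Lambda^2,\,Z^1+Z^2)$---indeed the entire linear pencil $(t_1\Lambda^1+t_2\Lambda^2,\,t_1Z^1+t_2Z^2)$ with $t_1,t_2\in\mathbb{R}$---is again a contraction of the very Nambu--Jacobi $3$-bracket generated by $(\eta^{[3]},\mathcal{E}^{[2]})$. Concretely, set $G:=t_2H_2-t_1H_1$ (so $G=H_2-H_1$ for the plain sum). Using the linearity of the interior product together with the explicit formulas \eqref{dogu-1} and \eqref{dogu-2}, one checks that
\begin{equation*}
t_1\Lambda^1+t_2\Lambda^2=\iota_{dG}\eta^{[3]}+G\,\mathcal{E}^{[2]},\qquad t_1Z^1+t_2Z^2=\iota_{dG}\mathcal{E}^{[2]}.
\end{equation*}
By Corollary~\ref{babanis}, the right-hand sides are exactly the Jacobi pair attached to the contraction $\{\bullet,\bullet\}^{G}$ of the $3$-bracket along the last entry $F_3=G$; hence the pencil $(t_1\Lambda^1+t_2\Lambda^2,\,t_1Z^1+t_2Z^2)$ satisfies the Jacobi identities \eqref{ident-Jac} for every $t_1,t_2$. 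In particular, taking $t_1=t_2=1$, the pair $(\Lambda^1+\Lambda^2,\,Z^1+Z^2)$ is a Jacobi structure, which is precisely the defining condition \eqref{cond-Jac-comp} of compatibility.

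To spell out the compatibility conditions \eqref{dogu} themselves, I would expand the two identities of \eqref{ident-Jac} for the sum $(\Lambda^1+\Lambda^2,\,Z^1+Z^2)$, using bilinearity of the Schouten--Nijenhuis bracket and of the wedge product. Since each of $(\Lambda^1,Z^1)$ and $(\Lambda^2,Z^2)$ is already a Jacobi pair---Corollary~\ref{babanis} with $F_3=-H_1$ and $F_3=H_2$, respectively---the diagonal contributions $[\Lambda^i,\Lambda^i]+2Z^i\wedge\Lambda^i$ and $[Z^i,\Lambda^i]$ cancel, and the surviving cross-terms yield, after the routine sign bookkeeping, precisely the two relations in \eqref{dogu}.

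A more hands-on alternative, worth carrying out as an independent check, is to verify \eqref{dogu} directly from the fundamental identity \eqref{3-Lie-cond2}: substitute \eqref{dogu-1}--\eqref{dogu-2} into the Schouten--Nijenhuis brackets, apply the distribution rules for $\iota$ and $[\bullet,\bullet]$ (the same ones used in the proof of Theorem~\ref{prop-LCP-Jacobi}), and evaluate on exact one-forms $dF_i$ to reduce everything to iterated $3$-brackets of functions, where \eqref{3-Lie-cond2} applies. In either route the only genuine difficulty is organizational rather than conceptual: tracking the signs generated by the terms $H_i\,\mathcal{E}^{[2]}$ and by reordering entries in the wedge and Schouten--Nijenhuis brackets. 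Since the pencil argument above bypasses this bookkeeping entirely, I would adopt it as the main line of proof and, at most, record the fundamental-identity computation as a remark.
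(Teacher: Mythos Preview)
Your argument is correct and quite clean. The key computation---that $G=t_2H_2-t_1H_1$ gives $t_1\Lambda^1+t_2\Lambda^2=\iota_{dG}\eta^{[3]}+G\mathcal{E}^{[2]}$ and $t_1Z^1+t_2Z^2=\iota_{dG}\mathcal{E}^{[2]}$---is a straightforward consequence of the linearity of $\iota$ together with the explicit signs in \eqref{dogu-1}--\eqref{dogu-2}, and Corollary~\ref{babanis} then delivers the Jacobi property of the pencil immediately. The expansion you sketch in the second paragraph, isolating the cross-terms to recover \eqref{dogu}, is also fine as a corollary.

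As for comparison: the paper does \emph{not} supply its own proof of this theorem; it simply cites the reference \cite{IbanezLeonPadron98} for the result and moves on. Your pencil argument is therefore genuinely additional content rather than a reworking of an existing argument in the text. It has the virtue of staying entirely within the machinery the paper has already built (Corollary~\ref{babanis} and the contraction formalism), so it would slot in naturally as a short self-contained proof where the paper currently has none. The alternative route you mention---unpacking everything in terms of the fundamental identity \eqref{3-Lie-cond2}---is closer in spirit to how such statements are typically proved in the cited literature, but is considerably longer; your observation that the linear pencil is itself a single contraction by $G$ is the efficient shortcut.
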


Referring to \eqref{ruzgar}, the Nambu--Jacobi Hamiltonian dynamics generated by a pair $(H_1,H_2)$ of Hamiltonian functions is computed to be
\begin{equation}\label{ruzgar-3}
\begin{split}
    \frac{dx}{dt} &= \iota_{dH_{1} \wedge dH_{2}} \eta^{[3]} - H_1 \iota_{dH_2} \mathcal{E}^{[2]}+  H_2 \iota_{dH_1} \mathcal{E}^{[2]} \\
   &=  \iota_{dH_{1}} \big( \iota_{dH_{2}} \eta^{[3]} + H_2 \mathcal{E}^{[2]} \big) - H_1 \iota_{dH_2} \mathcal{E}^{[2]}
   \\
   &= - \iota_{dH_{2}}\big( \iota_{dH_{1}} \eta^{[3]} +H_1 \mathcal{E}^{[2]}\big) +  H_2 \iota_{dH_1} \mathcal{E}^{[2]}.
\end{split}
\end{equation}
Here, the second line is manifesting the existence of the Jacobi pair $(\Lambda^2,Z^2)$ in \eqref{dogu-1} while the third line is exhibiting the Jacobi pair $(\Lambda^1,Z^1)$ in \eqref{dogu-2}. Hence,  the Nambu Hamiltonian system \eqref{ruzgar-3} can be expressed in the Jacobi bi-Hamiltonian form 
\begin{equation}
\begin{split}
\frac{dx}{dt} &= (\Lambda^2)^\sharp(d H_1)-H_1 Z^2 = \big(\iota_{dH_2}\eta^{[3]}+ H_2\mathcal{E}^{[2]}\big) (d H_1) - H_1 \iota_{dH_2}\mathcal{E}^{[2]}
\\
\frac{dx}{dt} &= (\Lambda^1)^\sharp(d H_2)-H_2 Z^1 =  \big(-\iota_{dH_1}\eta^{[3]}- H_1\mathcal{E}^{[2]}\big)(dH_2) - H_2 (-\iota_{dH_1}\mathcal{E}^{[2]}).
\end{split}
\end{equation}

\section{Locally Conformal Nambu--Poisson Setting}\label{sec:LCNPois}

\subsection{Local Conformality in Nambu--Poisson Geometry}\label{sec:LCNP}

In Subsection \ref{sec:LCP}, we examined the globalization problem in the Poisson setting. Specifically, we introduced the concept of locally conformally Poisson manifolds, which are obtained by gluing together local Poisson charts that agree on their overlaps up to a conformal factor. In this subsection, we extend this approach to the Nambu--Poisson setting, where the goal is to glue local Nambu--Poisson charts that satisfy the locally conformal type of compatibility on overlaps. 


Assume a manifold $\mathcal{P}$ with an open covering 
\begin{equation}
    \mathcal{P} = \bigsqcup_\alpha U_\alpha
\end{equation}
so that each local chart admits local $k$-Nambu Poisson structures: 
\begin{equation}\label{cici}
(U_\alpha,\eta^{[k]}_\alpha),\quad (U_\beta,\eta^{[k]}_\beta), \quad (U_\gamma,\eta^{[k]}_\gamma), \quad \dots.
\end{equation} 
On the intersections of any two open subsets with nonempty overlap, the local Nambu--Poisson structures may fail to satisfy the compatibility conditions required to define a global tensor directly.  

As a locally conformal approach to the globalization problem, let us assume the existence of a family of local functions defined on each open subset:
\begin{equation}
\sigma_\alpha: U_\alpha \to \mathbb{R}, \quad 
\sigma_\beta: U_\beta \to \mathbb{R}, \quad 
\sigma_\gamma: U_\gamma \to \mathbb{R}, \quad \dots ,
\end{equation}
whose exterior derivatives agree on the overlaps. That is, if one has a non-trivial intersection, say $U_\alpha \cap U_\beta \neq \emptyset$, then
\begin{equation}\label{dsigma}
d\sigma_\alpha = d\sigma_\beta.
\end{equation}
This equality holds analogously on all nonempty intersections. 
In light of Poincar\'{e} lemma, this assumption determines the Lee form $\theta$  whose local picture on $U_\alpha$ is denoted as 
\begin{equation} \label{titaa}
    \theta\vert_\alpha = d\sigma_\alpha.
\end{equation} 
 
Further, we take that, by means of the conformal functions, for any two overlapping subsets $ U_\alpha \cap U_\beta \neq \emptyset $, the corresponding local Nambu--Poisson structures satisfy
\begin{equation}\label{usta}
    e^{-(k-1)\sigma_\alpha}\,\eta^{[k]}_\alpha \;=\; e^{-(k-1)\sigma_\beta}\,\eta^{[k]}_\beta .
\end{equation}
This relation holds analogously on all nonempty intersections. It is immediate to see that the conformal equalities in \eqref{usta} determine the transition scalars
\begin{equation} \label{transition1}
\kappa_{\beta \alpha} = \frac{e^{(k-1)\sigma_\alpha}}{e^{(k-1)\sigma_\beta}} = e^{-(k-1)(\sigma_\beta - \sigma_\alpha)},
\end{equation}
satisfying the cocycle condition
\begin{equation}\label{cocycle1}
\kappa_{\beta \alpha} \, \kappa_{\alpha \gamma} = \kappa_{\beta \gamma}.
\end{equation}
Accordingly,  the local Nambu--Poisson structures $ \{\eta^{[k]}_\alpha\} $ can be glued up to a line bundle valued $k$-multivector field.  

Even though the family $ \{\eta^{[k]}_\alpha\} $ of local Nambu--Poisson tensors listed in \eqref{cici} cannot be glued together directly into a real-valued global object, motivated by \eqref{LCP-bi-vector}, we introduce the rescaled $k$-vector fields
\begin{equation}\label{gs}
\eta^{[k]}|_\alpha := e^{-(k-1)\sigma_\alpha}\,\eta^{[k]}_\alpha.
\end{equation}
These rescaled tensors satisfy the compatibility condition
\begin{equation}\label{bibi}
\eta^{[k]}|_\alpha = \eta^{[k]}|_\beta ,
\end{equation}
and hence define a globally well-defined $k$-vector field $ \eta^{[k]} $, whose local expression, for example, on $ U_\alpha $ is $ \eta^{[k]}|_\alpha $.

In analogy with the Poisson case, we refer to the construction above---namely, the collection $\{(U_\alpha,\eta_\alpha^{[k]})\}$
of local $k$-Nambu--Poisson charts together with the set of conformal functions 
$\{\sigma_\alpha\}$ satisfying the compatibility condition \eqref{usta}---as a  locally conformally $k$-Nambu--Poisson manifold. We denote a locally conformally $k$-Nambu--Poisson manifold by
\begin{equation}
(\mathcal{P},U_\alpha,\eta_\alpha^{[k]},\sigma_\alpha).
\end{equation}

 It is important to note that the global $k$-vector field $\eta^{[k]}$ introduced in \eqref{bibi} does not generally retain the Nambu--Poisson property. However, as we will show in the following subsections, every locally conformally $k$-Nambu--Poisson manifold naturally defines a $k$-Nambu--Jacobi manifold. Our analysis proceeds inductively: we first establish the case $k=3$ as the base step, and then extend the construction to arbitrary $k$ in the subsequent subsections.  The explicit treatment of the $k=3$ case, together with its Hamiltonian structure, will also serve as a foundation for our later discussion of bi-Hamiltonian dynamics in the locally conformal setting.



\subsection{Analyzing Locally Conformally 3-Nambu--Poisson Manifolds}\label{subsec:LC-3-NP}

We now specialize the general construction of locally conformally
$k$-Nambu--Poisson manifolds to the case $k=3$.
Accordingly, let
\begin{equation}\label{LC3NPLocal}
(\mathcal{P},U_\alpha,\eta_\alpha^{[3]},\sigma_\alpha)
  \end{equation}
be a locally conformally $3$-Nambu--Poisson manifold in the sense defined above.
 Referring to \eqref{gs}, we introduce a new family of local $3$-vector fields by
\begin{equation}\label{local-3-vector}
\eta ^{[3]}\vert_\alpha= e^{-2\sigma_\alpha}
\eta_\alpha ^{[3]}
\end{equation}
which glue together to define a global $3$-vector field $\eta^{[3]}$.

To ensure compatibility with the locally conformal structure, we define
local functions $F^i_\alpha$ through the transformation rule
\begin{equation}\label{glue-func}
F^i\big|_\alpha = e^{\sigma_\alpha} F^i_\alpha , \qquad i=1,2,3,
\end{equation}
which allows the functions $F^i\big|_\alpha$ to glue into global
functions $F^i$ on $\mathcal{P}$.
We compute the local Nambu--Poisson $3$-bracket $\{\bullet,\bullet,\bullet\}_\alpha$ determined by the  local Nambu--Poisson $3$-vector field $\eta^{[3]}_\alpha$ in terms $\eta^{[3]}\vert_\alpha$ in \eqref{local-3-vector} and $F^i\vert_\alpha$ in \eqref{glue-func} as 
 \begin{equation}\label{glue-3-NP-brac}
 	\begin{split}
 		\{F_\alpha,F^2_\alpha,F^3_\alpha\}_\alpha &= \eta^{[3]}_\alpha(dF^1_\alpha,dF^2_\alpha,dF^3_\alpha) \\
 		&= e^{2\sigma_\alpha}\eta^{[3]}\vert_\alpha(d(e^{-\sigma_\alpha}F^1\vert_\alpha),d(e^{-\sigma_\alpha}F^2\vert_\alpha),d(e^{-\sigma_\alpha}F^3\vert_\alpha)) \\
 		&= e^{-\sigma_\alpha}\eta^{[3]}\vert_\alpha(dF^1\vert_\alpha - F^1\vert_\alpha d\sigma_\alpha,dF^2\vert_\alpha - F^2\vert_\alpha d\sigma_\alpha,dF^3\vert_\alpha - F^3\vert_\alpha d\sigma_\alpha).
 	\end{split}
 \end{equation}
Multiplying both sides of \eqref{glue-3-NP-brac} by $e^{\sigma_\alpha}$ yields
\begin{equation}\label{glued-3NP}
 	\begin{split}
 		e^{\sigma_\alpha}\{F^1_\alpha,F^2_\alpha,F^3_\alpha\}_\alpha &= \eta^{[3]}\vert_\alpha(dF^1\vert_\alpha,dF^2\vert_\alpha,dF^3\vert_\alpha) - F^1\vert_\alpha\eta^{[3]}\vert_\alpha(d\sigma_\alpha,dF^2\vert_\alpha,dF^3\vert_\alpha) \\
 		&\quad - F^2\vert_\alpha\eta^{[3]}\vert_\alpha(dF^1\vert_\alpha,d\sigma_\alpha,dF^3\vert_\alpha) - F^3\vert_\alpha\eta^{[3]}\vert_\alpha(dF^1\vert_\alpha,dF^2\vert_\alpha,d\sigma_\alpha) \\
        &= \eta^{[3]}\vert_\alpha(dF^1\vert_\alpha,dF^2\vert_\alpha,dF^3\vert_\alpha) - F^1\vert_\alpha\eta^{[3]}\vert_\alpha(dF^2\vert_\alpha,dF^3\vert_\alpha,d\sigma_\alpha) \\
 		&\quad + F^2\vert_\alpha\eta^{[3]}\vert_\alpha(dF^1\vert_\alpha,dF^3\vert_\alpha,d\sigma_\alpha) - F^3\vert_\alpha\eta^{[3]}\vert_\alpha(dF^1\vert_\alpha,dF^2\vert_\alpha,d\sigma_\alpha) \\
        &= \eta^{[3]}\vert_\alpha(dF^1\vert_\alpha,dF^2\vert_\alpha,dF^3\vert_\alpha) - F^1\vert_\alpha\iota_{d\sigma_\alpha}\eta^{[3]}\vert_\alpha(dF^2\vert_\alpha,dF^3\vert_\alpha) \\
 		&\quad + F^2\vert_\alpha\iota_{d\sigma_\alpha}\eta^{[3]}\vert_\alpha(dF^1\vert_\alpha,dF^3\vert_\alpha) - F^3\vert_\alpha\iota_{d\sigma_\alpha}\eta^{[3]}\vert_\alpha(dF^1\vert_\alpha,dF^2\vert_\alpha)
         \\
        &= \eta^{[3]}\vert_\alpha(dF^1\vert_\alpha,dF^2\vert_\alpha,dF^3\vert_\alpha) - F^1\vert_\alpha\iota_{\theta\vert_\alpha}\eta^{[3]}\vert_\alpha(dF^2\vert_\alpha,dF^3\vert_\alpha) \\
 		&\quad + F^2\vert_\alpha\iota_{\theta\vert_\alpha}\eta^{[3]}\vert_\alpha(dF^1\vert_\alpha,dF^3\vert_\alpha) - F^3\vert_\alpha\iota_{\theta\vert_\alpha}\eta^{[3]}\vert_\alpha(dF^1\vert_\alpha,dF^2\vert_\alpha)
        ,
 	\end{split}
 \end{equation}
 where $\theta$ is the Lee form in \eqref{titaa}. 
 Clearly, the left-hand side of this equation is the multiplication of a local function by a conformal factor as in \eqref{glue-func}. So the left-hand side of the calculation \eqref{glued-3NP} can be written as a global function. This defines the global $3$-bracket $\{\bullet,\bullet,\bullet\}$ whose restriction to  the local chart $U_\alpha$ is
 \begin{equation}\label{sennur}
    \{F^1,F^2,F^3\}\vert_\alpha =  e^{\sigma_\alpha}
    \{F^1_\alpha,F^2_\alpha,F^3_\alpha\}_\alpha.
 \end{equation}
 Furthermore, for the right-hand side, we introduce a bivector field
\begin{equation}\label{newE}
    \mathcal{E}^{[2]}\vert_\alpha = -\iota_{d\sigma_\alpha}\eta^{[3]}\vert_\alpha=-\iota_{\theta\vert_\alpha}\eta^{[3]}\vert_\alpha.
\end{equation}
Hence, we can write the global expression of each term in the right-hand side of the calculation \eqref{glued-3NP} as well. As a result, referring to the $3$-vector field $\eta ^{[3]}$ in \eqref{local-3-vector} and the bivector field $\mathcal{E}^{[2]}$ in \eqref{newE}, we determine a global $3$-bracket of the global functions $F^1$, $F^2$ and $F^3$ on $\mathcal{P}$ as
 \begin{equation}\label{LCNJ}
 \begin{split}
\{ F^1,F^2,F^3 \} &= \eta ^{[3]}(dF^1,dF^2,dF^3) + F^1\mathcal{E}^{[2]}(dF^2,dF^3) - F^2 \mathcal{E}^{[2]}(dF^1,dF^3) \\& \qquad  + F^3 \mathcal{E}^{[2]}(dF^1,dF^2).     
 \end{split}
\end{equation}
We call this bracket a locally conformal Nambu--Poisson $3$-bracket.
Note that the $3$-bracket \eqref{LCNJ} is generated by the pair
$(\eta^{[3]},\mathcal{E}^{[2]})$.

A comparison with the Nambu--Jacobi $k$-bracket \eqref{bjk} for $k=3$
suggests a close relationship between these two structures.
The following results make this relationship precise.
In particular, we first show that the contraction of a locally conformally
$3$-Nambu--Poisson manifold naturally induces a locally conformal Poisson
structure.
This structural result then allows us to conclude that the locally
conformal Nambu--Poisson $3$-bracket \eqref{LCNJ} indeed defines a
Nambu--Jacobi $3$-bracket.

\begin{theorem}\label{defne}
Let $(\mathcal{P},U_\alpha,\eta_\alpha^{[3]},\sigma_\alpha)$ be a locally conformally $3$-Nambu--Poisson manifold. 
For any family $\{F_\alpha\}$ of local functions, the contractions of local $3$-vector fields defines a family of bivector fields:
\begin{equation}\label{localcontract-3to2}
  \{ \Lambda_\alpha \} = \{\iota_{dF_\alpha}\eta^{[3]}_\alpha \}.
\end{equation}
The resulting family $\{(U_\alpha,\Lambda_\alpha)\}$ satisfies the compatibility conditions \eqref{kuskun}, and hence defines a locally conformal Poisson structure on $\mathcal{P}$.
\end{theorem}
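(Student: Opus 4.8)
The plan is to establish the two ingredients required by the definition of a locally conformal Poisson manifold in Subsection~\ref{sec:LCP}: first, that every $\Lambda_\alpha$ is a genuine Poisson bivector on $U_\alpha$; and second, that the family $\{\Lambda_\alpha\}$ obeys the conformal overlap relation \eqref{kuskun} with the very same conformal functions $\{\sigma_\alpha\}$ already carried by the ambient locally conformally $3$-Nambu--Poisson structure, so that the transition scalars \eqref{transition} and the cocycle condition \eqref{cocycle} are inherited for free and the Lee form of the contracted structure is again $\theta$.

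For the first ingredient I would invoke Corollary~\ref{emre1} directly: since $\eta^{[3]}_\alpha$ is a Nambu--Poisson $3$-vector field on $U_\alpha$, its contraction $\Lambda_\alpha = \iota_{dF_\alpha}\eta^{[3]}_\alpha$ against the differential of any local function is a Poisson bivector, $[\Lambda_\alpha,\Lambda_\alpha]=0$. Hence each $(U_\alpha,\Lambda_\alpha)$ is a local Poisson manifold, which is precisely the local data one starts from in the definition of a locally conformally Poisson manifold.

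For the second ingredient I would rewrite $e^{-\sigma_\alpha}\Lambda_\alpha$ entirely in terms of globally defined objects. Substituting $\eta^{[3]}_\alpha = e^{2\sigma_\alpha}\,\eta^{[3]}\vert_\alpha$ from \eqref{local-3-vector} and the rescaling rule \eqref{glue-func}, which presents $F_\alpha$ as the local piece of a global function $F$ and therefore gives $dF_\alpha = e^{-\sigma_\alpha}\bigl(dF\vert_\alpha - F\vert_\alpha\,d\sigma_\alpha\bigr)$, and then using $d\sigma_\alpha = \theta\vert_\alpha$ together with $\mathcal{E}^{[2]}\vert_\alpha = -\iota_{\theta\vert_\alpha}\eta^{[3]}\vert_\alpha$ from \eqref{newE}, a short computation yields
\begin{equation}
e^{-\sigma_\alpha}\Lambda_\alpha
= \iota_{dF\vert_\alpha - F\vert_\alpha\,\theta\vert_\alpha}\,\eta^{[3]}\vert_\alpha
= \iota_{dF}\eta^{[3]} + F\,\mathcal{E}^{[2]} .
\end{equation}
The right-hand side is a globally defined bivector field --- in fact it is exactly the contracted bivector $\Lambda$ appearing in Corollary~\ref{babanis} --- so $e^{-\sigma_\alpha}\Lambda_\alpha$ is independent of the chart index, which is the assertion \eqref{kuskun}. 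Combined with the hypothesis $d\sigma_\alpha = d\sigma_\beta$ already built into the locally conformal $3$-Nambu setup, this shows that $\{(U_\alpha,\Lambda_\alpha)\}$ together with $\{\sigma_\alpha\}$ forms a locally conformally Poisson manifold, with global bivector $\Lambda = \iota_{dF}\eta^{[3]} + F\,\mathcal{E}^{[2]}$ and Lee form $\theta$.

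The one point I would be careful to flag is the role of the local functions $F_\alpha$: the contraction $\iota_{dF_\alpha}\eta^{[3]}_\alpha$ is Poisson for an arbitrary choice of $F_\alpha$ on each chart, but the conformal overlap relation \eqref{kuskun} forces the $F_\alpha$ to be linked across charts through \eqref{glue-func} (equivalently, $e^{\sigma_\alpha}dF_\alpha$ must be globally well defined), and it is under this convention that the statement is to be understood. Apart from this, the argument is purely computational and produces no new cocycle data, since the conformal functions of the contracted Poisson structure coincide with those of the original $k$-Nambu--Poisson manifold. I expect the conformal bookkeeping in the displayed computation --- keeping track of the powers $e^{2\sigma_\alpha}$ and $e^{-\sigma_\alpha}$ coming respectively from $\eta^{[3]}_\alpha$ and from $dF_\alpha$ --- to be the only place where care is needed; there is no genuine geometric obstacle, the essential input having already been supplied by Corollary~\ref{emre1}.
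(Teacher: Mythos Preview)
Your proposal is correct and follows essentially the same route as the paper's proof: both invoke Corollary~\ref{emre1} to obtain the local Poisson property, and both verify the overlap condition \eqref{kuskun} by rewriting $e^{-\sigma_\alpha}\Lambda_\alpha$ as the restriction of the global bivector $\iota_{dF}\eta^{[3]} + F\mathcal{E}^{[2]}$. Your additional remark that the family $\{F_\alpha\}$ must obey the gluing rule \eqref{glue-func} for the overlap computation to go through is a valid caveat that the paper's statement leaves implicit.
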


\begin{proof}
    On a local chart $U_\alpha$, contract the local Nambu--Poisson $3$-vector field $\eta_\alpha^{[3]}$ with a local exact one-form $dF_\alpha$ as in \eqref{localcontract-3to2}. 
According to Corollary \ref{emre1},  $\Lambda_\alpha$ is a local Poisson bivector field since it arises as the contraction of the local Nambu--Poisson $3$-vector field $\eta^{[3]}_\alpha$. We now show that the local Poisson bivector fields $\Lambda_\alpha$, $\Lambda_\beta$, $\dots$, defined as in \eqref{localcontract-3to2}, satisfy the locally conformal relation in \eqref{kuskun}. 
Indeed, on an overlap $U_\alpha\cap U_\beta \neq \emptyset$, we compute
\begin{equation}\label{hayda}
\begin{split}
  e^{-\sigma_\alpha}\Lambda_\alpha& = e^{-\sigma_\alpha}\iota_{dF^3_\alpha}\eta^{[3]}_\alpha = e^{-\sigma_\alpha}\iota_{d(e^{-\sigma_\alpha}F^3\vert_\alpha)}e^{2\sigma_\alpha}\eta^{[3]}\vert_\alpha \\
     & = \iota_{(dF^3\vert_\alpha - F^3\vert_\alpha d\sigma_\alpha)}\eta^{[3]}\vert_\alpha \\
     & = \iota_{dF^3\vert^{[3]}_\alpha}\eta^{[3]}\vert_\alpha - F^3\vert_\alpha\iota_{d\sigma_\alpha}\eta^{[3]}\vert_\alpha \\
     & = \iota_{dF^3\vert_\alpha}\eta^{[3]}\vert_\alpha + F^3\vert_\alpha\mathcal{E}^{[2]}\vert_\alpha
     \\  
     & = \iota_{dF^3\vert_\beta}\eta^{[3]}\vert_\beta + F^3\vert_\beta\mathcal{E}^{[2]}\vert_\beta
     \\& = e^{-\sigma_\beta}\Lambda_\beta,
 \end{split}
\end{equation} 
where we have used the bivector field notation $\mathcal{E}^{[2]}$ in \eqref{newE}, and the local bivector field $\Lambda_\beta$ is assumed to be
\begin{equation}\label{localcontract-3to2+}
\Lambda_\beta = \iota_{dF_\beta}\eta^{[3]}_\beta.
\end{equation}
Since this relation holds true for all of the charts, the local Poisson structures $\{(U_\alpha,\Lambda_\alpha)\}$ determine a locally conformally Poisson manifold. 
\end{proof}

We now use the contraction result above to identify the locally conformal
Nambu--Poisson $3$-bracket \eqref{LCNJ} as a Nambu--Jacobi $3$-bracket.

\begin{theorem}\label{memre}
A locally conformally $3$-Nambu--Poisson manifold is a $3$-Nambu--Jacobi manifold. Further, the locally conformal Nambu--Poisson $3$-bracket in \eqref{LCNJ} is a Nambu--Jacobi $3$-bracket, determined by the pair $(\eta^{[3]},\mathcal{E}^{[2]})$, where $\eta^{[3]}$ is the global $3$-vector field introduced in \eqref{local-3-vector}, and $\mathcal{E}^{[2]}$ is the bivector field given in \eqref{newE}.
\end{theorem}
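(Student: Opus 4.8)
The plan is to verify the two Nambu--Jacobi axioms \textbf{NJ1} and \textbf{NJ2} for the locally conformal Nambu--Poisson $3$-bracket \eqref{LCNJ}, extracting them from the already-established Poisson/Jacobi results rather than from a direct manipulation of the conformal factors. The axiom \textbf{NJ2} (that the bracket be a first-order linear differential operator in each argument) is immediate: \eqref{LCNJ} is literally of the algebraic form \eqref{bjk} with $k=3$, assembled from the $3$-vector field $\eta^{[3]}$ of \eqref{local-3-vector} and the bivector field $\mathcal{E}^{[2]}$ of \eqref{newE}, and any bracket of that shape satisfies \textbf{NJ2} for purely algebraic reasons, exactly as the identity \eqref{Jacobi-bracket-} is obtained in the Jacobi case. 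Thus the real content of the theorem is the fundamental identity \textbf{NJ1}.

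To obtain \textbf{NJ1} I would use the contraction mechanism recalled around \eqref{bjk1}: a $3$-bracket of the form \eqref{bjk} is Nambu--Jacobi as soon as the $2$-bracket obtained by freezing its last argument is a Nambu--Jacobi $2$-bracket, i.e. a Jacobi bracket. So I fix a global function $F^3$ on $\mathcal{P}$ and freeze the third slot of \eqref{LCNJ}. A one-line rearrangement---precisely Corollary~\ref{babanis}---rewrites the resulting $2$-bracket in the Jacobi form \eqref{bra-Jac} with bivector $\Lambda=\iota_{dF^3}\eta^{[3]}+F^3\mathcal{E}^{[2]}$ and vector field $Z=\iota_{dF^3}\mathcal{E}^{[2]}$ as in \eqref{atakta}. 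It then remains to show that this pair $(\Lambda,Z)$ is a genuine Jacobi pair, i.e. satisfies \eqref{ident-Jac}.

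For that last point I would invoke Theorem~\ref{defne} together with Theorem~\ref{prop-LCP-Jacobi}. By Theorem~\ref{defne} the local bivectors $\Lambda_\alpha=\iota_{dF^3_\alpha}\eta^{[3]}_\alpha$ are local Poisson structures that fit into a locally conformally Poisson manifold, and the computation \eqref{hayda} inside its proof shows that the associated global bivector field is exactly $\iota_{dF^3}\eta^{[3]}+F^3\mathcal{E}^{[2]}=\Lambda$. Theorem~\ref{prop-LCP-Jacobi} then guarantees that $(\Lambda,\iota_\theta\Lambda)$ is a Jacobi pair; using $\mathcal{E}^{[2]}=-\iota_\theta\eta^{[3]}$ from \eqref{newE} and the skew-symmetry $\iota_\theta\iota_\theta\eta^{[3]}=0$, one checks $\iota_\theta\Lambda=\iota_{dF^3}\mathcal{E}^{[2]}=Z$, so that $(\Lambda,Z)$ is precisely the Jacobi pair produced by Theorem~\ref{prop-LCP-Jacobi}. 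Hence the frozen $2$-bracket is a Jacobi bracket for every choice of $F^3$; the contraction principle then gives \textbf{NJ1} for \eqref{LCNJ}, and together with \textbf{NJ2} this shows that \eqref{LCNJ} is a Nambu--Jacobi $3$-bracket generated by $(\eta^{[3]},\mathcal{E}^{[2]})$, so the underlying locally conformally $3$-Nambu--Poisson manifold is a $3$-Nambu--Jacobi manifold.

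I expect the only real obstacle to be bookkeeping, not conceptual: one must make the identification of the frozen $2$-bracket with the Jacobi bracket of $(\Lambda,Z)$ fully precise, reconciling the interior-product (right-contraction) sign conventions used in Theorem~\ref{defne}, in Theorem~\ref{prop-LCP-Jacobi} (where $Z=\iota_\theta\Lambda$) and in Corollary~\ref{babanis} (where $Z=\iota_{dF^3}\mathcal{E}^{[2]}$), and one must confirm that the ``converse'' contraction statement following \eqref{bjk1} indeed delivers the full fundamental identity for the $3$-bracket. A more laborious alternative---imitating the proof of Theorem~\ref{prop-LCP-Jacobi} by starting from the local fundamental identities for the $\eta^{[3]}_\alpha$ and pushing the rescaling $\eta^{[3]}|_\alpha=e^{-2\sigma_\alpha}\eta^{[3]}_\alpha$ through the Schouten--Nijenhuis calculus---is available as a sanity check, but I would not carry it out in detail.
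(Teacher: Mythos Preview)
Your proposal is correct and follows essentially the same route as the paper: contract the locally conformal $3$-Nambu--Poisson structure by an arbitrary $F^3$ via Theorem~\ref{defne}, apply Theorem~\ref{prop-LCP-Jacobi} to obtain the Jacobi pair $(\Lambda,Z)=(\iota_{dF^3}\eta^{[3]}+F^3\mathcal{E}^{[2]},\,\iota_{dF^3}\mathcal{E}^{[2]})$, and then invoke Corollary~\ref{babanis} to conclude that $(\eta^{[3]},\mathcal{E}^{[2]})$ is a $3$-Nambu--Jacobi pair. The paper carries out exactly the bookkeeping you flag as the only obstacle---the identification $\iota_\theta\Lambda=\iota_{dF^3}\mathcal{E}^{[2]}$ via $\iota_\theta\iota_\theta\eta^{[3]}=0$---in its equation for $Z\vert_\alpha$, so your worries there are well-founded but easily resolved.
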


\begin{proof}

Theorem \ref{prop-LCP-Jacobi} states that every locally conformally Poisson manifold is a Jacobi manifold. To apply this theorem in the present analysis, we need to identify the pair $(\Lambda,Z)$ that determines the Jacobi structure for the locally conformally Poisson manifold $\{(U_\alpha,\Lambda_\alpha)\}$ where $\Lambda_\alpha$'s are defined as in \eqref{localcontract-3to2}.  For this purpose, using the computation in \eqref{hayda} and recalling \eqref{LCP-bi-vector}, we define on each chart $U_\alpha$ the bivector field
\begin{equation}\label{yasincan}
 \begin{split}
     \Lambda\vert_\alpha & = e^{-\sigma_\alpha}\Lambda_\alpha   = \iota_{dF^3\vert_\alpha}\eta^{[3]}\vert_\alpha + F^3\vert_\alpha\mathcal{E}^{[2]}\vert_\alpha,
 \end{split}
 \end{equation}
which can be glued together to form a global bivector field $\Lambda$. Further, referring to the contraction \eqref{LCP-vector}, we compute a vector field 
 \begin{equation}\label{hayt}
 \begin{split}
    Z\vert_\alpha &= \iota_{d\sigma_\alpha}\Lambda\vert_\alpha = \iota_{d\sigma_\alpha}( \iota_{dF^3\vert_\alpha}\eta^{[3]}\vert_\alpha + F^3\vert_\alpha\mathcal{E}^{[2]}\vert_\alpha) \\
    & = \iota_{d\sigma_\alpha}\iota_{dF^3\vert_\alpha}\eta^{[3]}\vert_\alpha + F^3\vert_\alpha \iota_{d\sigma_\alpha}\mathcal{E}^{[2]}\vert_\alpha \\
    & = \iota_{d\sigma_\alpha}\iota_{dF^3\vert_\alpha}\eta^{[3]}\vert_\alpha - F^3\vert_\alpha \iota_{d\sigma_\alpha}\iota_{d\sigma_\alpha}\eta^{[3]}\vert_\alpha \\
    & = - \iota_{dF^3\vert_\alpha}\iota_{d\sigma_\alpha}\eta^{[3]}\vert_\alpha \\
    & = \iota_{dF^3\vert_\alpha}\mathcal{E}^{[2]}\vert_\alpha 
\end{split}
\end{equation}
that can be glued up to a global vector field $Z$. Here, we have substituted $\mathcal{E}^{[2]}$ from \eqref{newE}. 
Hence, we obtain the following global Jacobi pair:
\begin{equation}\label{globalcontract-3to2}
    \Lambda = \iota_{dF^3}\eta ^{[3]}+ F^3\mathcal{E}^{[2]}, \qquad Z = \iota_{dF^3}\mathcal{E}^{[2]}.
\end{equation}
The Jacobi bracket determined by a Jacobi pair $(\Lambda,Z)$ is exhibited in \eqref{bra-Jac}. Accordingly, for the Jacobi pair \eqref{globalcontract-3to2}, we compute the Jacobi bracket as
\begin{equation}
    \{ F^1,F^2\}^{F^3} = \Big(\iota_{dF^3}\eta^{[3]} + F^3\mathcal{E}^{[2]}\Big) (dF^1,dF^2) + F^1\iota_{dF^3}\mathcal{E}^{[2]}(F^2) - F^2 \iota_{dF^3}\mathcal{E}^{[2]}(F^1).
\end{equation}

Corollary \ref{babanis} states that a pair $(\eta^{[3]},\mathcal{E}^{[2]})$ determines a $3$-Nambu--Jacobi manifold structure if and only if the pair $(\Lambda,Z)$ obtained by its contraction \eqref{atakta} defines a Jacobi manifold. In the present situation, we arrive at the same conclusion: the contracted pair $(\Lambda,Z)$ in \eqref{globalcontract-3to2} defines a Jacobi manifold. Therefore, we conclude that the pair $(\eta^{[3]},\mathcal{E}^{[2]})$, where $\eta^{[3]}$ is given in \eqref{local-3-vector} and $\mathcal{E}^{[2]}$ in \eqref{newE}, is a $3$-Nambu--Jacobi pair, and hence determines a $3$-Nambu--Jacobi manifold. This completes the proof. 
\end{proof}

The flow of the proof is instructive for the subsequent analysis in this work. To summarize the argument at a glance, we present the following commutative diagram:
\begin{equation}\label{geomdiagram--}
\xymatrix{\{\eta^{[3]}_\alpha\} \ar@/^2pc/[rrr]^{\text{Contraction } \eqref{localcontract-3to2}}
\ar[ddd]_{ {\scriptsize\begin{matrix} \text{glue up}\\ \text{with} \\\text{Theorem }\ref{memre} \end{matrix}}} \ar@{<->}[rrr]_{\text{Theorem } \ref{defne}}&   & &\{\Lambda_\alpha\} \ar[ddd]^{ {\scriptsize\begin{matrix} \text{glue up} \\ \text{with} \\ \text{Theorem }\ref{prop-LCP-Jacobi}  \end{matrix}}}\\
  &  & &\\
  &  & &\\
(\eta^{[3]},\mathcal{E}^{[2]})\ar@/_2pc/[rrr]_{\text{Contraction } \eqref{globalcontract-3to2}}\ar@{<->} [rrr]^{\text{Corollary } \ref{babanis}}&  & & (\Lambda,Z)}
\end{equation}

The upper row of Diagram~\eqref{geomdiagram--} represents the relationship between a locally conformally $3$-Nambu--Poisson manifold and its contraction, as described in Theorem~\ref{defne}. Accordingly, the contraction yields a locally conformal Poisson structure. The downward arrow on the right refers to Theorem~\ref{prop-LCP-Jacobi}, which establishes that this locally conformal Poisson structure admits an associated Jacobi character $(\Lambda,Z)$.

The lower row, on the other hand, depicts the contraction relation between the globally defined structure $(\eta^{[3]},\mathcal{E}^{[2]})$ (obtained by gluing together the local charts of the locally conformally $3$-Nambu--Poisson manifold) and the Jacobi structure $(\Lambda,Z)$. By Corollary \ref{babanis}, $(\eta^{[3]},\mathcal{E}^{[2]})$ defines a $3$-Nambu--Jacobi structure since $(\Lambda,Z)$ is Jacobi. Hence, the downward arrow on the left represents Theorem~\ref{memre}, which is derived by combining Theorem~\ref{defne}, Theorem~\ref{prop-LCP-Jacobi}, and Corollary~\ref{babanis}, showing that the gluing and contraction processes are mutually consistent.

By Theorem~\ref{memre}, a locally conformal $3$-Nambu--Poisson structure
induces a Nambu--Jacobi structure determined by the pair
$(\eta^{[3]},\mathcal{E}^{[2]})$, with
$\mathcal{E}^{[2]}=-\iota_\theta\eta^{[3]}$.
On the other hand, given the associated global data $(\eta^{[3]},\theta)$,
with a closed one-form $\theta$, one may recover the locally conformal description.
Indeed, the local exactness of $\theta$ yields local potentials
$\sigma_\alpha$ satisfying $d\sigma_\alpha=d\sigma_\beta$ on overlaps, and
on each chart $U_\alpha$ this leads to
\begin{equation}
\eta^{[3]}_\alpha := e^{2\sigma_\alpha}\,\eta^{[3]}\big|_\alpha,
\end{equation}
which coincides with the local expression in \eqref{local-3-vector}. This observation motivates an equivalent global description of a locally
conformally $3$-Nambu--Poisson manifold in terms of either of the triples
\begin{equation}\label{LC-3-NP}
(\mathcal{P},\eta^{[3]},\mathcal{E}^{[2]})
\;\longleftrightarrow\;
(\mathcal{P},\eta^{[3]},\theta).
\end{equation}

The following corollary reformulates Theorem~\ref{defne}, originally stated
in terms of local data, using the global realization of locally conformally
$3$-Nambu--Poisson manifolds via the Lichnerowicz--de~Rham exact one-form
$d_\theta F$ (see Appendix~\ref{sec:LdR}).

\begin{corollary}
The triple
\begin{equation}
(\mathcal{P},\eta^{[3]},\mathcal{E}^{[2]})
=
(\mathcal{P},\eta^{[3]},-\iota_\theta\eta^{[3]})
\end{equation}
defines a locally conformal $3$-Nambu--Poisson structure if and only if,
for any function $F\in C^\infty(\mathcal{P})$, the contraction
\begin{equation}\label{sermet}
(\Lambda,Z)
=
\big(
\iota_{d_\theta F}\eta^{[3]},
-\iota_{d_\theta F\wedge\theta}\eta^{[3]}
\big),
\end{equation}
of the associated Nambu--Jacobi pair by the Lichnerowicz--de~Rham exact
one-form $d_\theta F$ defines a locally conformal Poisson structure with
respect to the same Lee form $\theta$.
\end{corollary}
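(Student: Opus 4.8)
The strategy is to read this corollary as the global, $d_\theta$-twisted restatement of Theorem~\ref{defne}, so that the proof reduces to (i) the bookkeeping identity
\[
\iota_{dF}\eta^{[3]}+F\,\mathcal{E}^{[2]}
=\iota_{dF}\eta^{[3]}-F\,\iota_{\theta}\eta^{[3]}
=\iota_{d_\theta F}\eta^{[3]},
\]
which rewrites the contracted bivector of \eqref{globalcontract-3to2} in Lichnerowicz--de~Rham form, together with the companion identity $\iota_{dF}\mathcal{E}^{[2]}=-\iota_{dF\wedge\theta}\eta^{[3]}=-\iota_{d_\theta F\wedge\theta}\eta^{[3]}$ (using $\mathcal{E}^{[2]}=-\iota_\theta\eta^{[3]}$, $\theta\wedge\theta=0$ and $\iota_\theta\iota_\theta=0$); and (ii) the already-established correspondences \eqref{LC-3-NP} and Theorems~\ref{defne} and~\ref{prop-LCP-Jacobi}. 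Consequently the pair $(\Lambda,Z)$ in \eqref{sermet} is literally the pair $(\iota_{dF^3}\eta^{[3]}+F^3\mathcal{E}^{[2]},\,\iota_{dF^3}\mathcal{E}^{[2]})$ produced inside the proof of Theorem~\ref{memre}, now written with the twisted differential, so nothing needs to be proved about this step beyond the elementary algebra of contractions.

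For the forward implication I would start from a locally conformally $3$-Nambu--Poisson manifold $(\mathcal{P},U_\alpha,\eta^{[3]}_\alpha,\sigma_\alpha)$ with $\theta|_\alpha=d\sigma_\alpha$, representing the given triple via $\eta^{[3]}_\alpha=e^{2\sigma_\alpha}\eta^{[3]}|_\alpha$ and $\mathcal{E}^{[2]}=-\iota_\theta\eta^{[3]}$. Given $F\in C^\infty(\mathcal{P})$, put $F_\alpha=e^{-\sigma_\alpha}F|_\alpha$, so that $dF_\alpha$ is a local exact one-form; the chain of equalities \eqref{hayda} then identifies the local Poisson bivectors $\Lambda_\alpha=\iota_{dF_\alpha}\eta^{[3]}_\alpha$ of Theorem~\ref{defne} with $e^{\sigma_\alpha}\bigl(\iota_{d_\theta F}\eta^{[3]}\bigr)|_\alpha$. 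By Theorem~\ref{defne} the family $\{(U_\alpha,\Lambda_\alpha)\}$ glues to a locally conformally Poisson manifold; since the conformal factors are the same $\sigma_\alpha$, its Lee form is again $\theta$, and by \eqref{LCP-bi-vector}--\eqref{LCP-vector} its global data is exactly $(\iota_{d_\theta F}\eta^{[3]},\,\iota_\theta\iota_{d_\theta F}\eta^{[3]})=(\Lambda,Z)$. As $F$ was arbitrary, this proves the "only if" direction.

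For the converse, assume that for every $F\in C^\infty(\mathcal{P})$ the pair \eqref{sermet} is locally conformally Poisson with Lee form $\theta$. Closedness of $\theta$ gives, via the Poincar\'e lemma, local potentials $\sigma_\alpha$ with $\theta|_\alpha=d\sigma_\alpha$ and $d\sigma_\alpha=d\sigma_\beta$ on overlaps; set $\eta^{[3]}_\alpha:=e^{2\sigma_\alpha}\eta^{[3]}|_\alpha$ as in \eqref{local-3-vector}. The conformal compatibility \eqref{usta} for $k=3$ then holds automatically because $\eta^{[3]}|_\alpha=\eta^{[3]}|_\beta$, so it only remains to check that each $\eta^{[3]}_\alpha$ is a genuine Nambu--Poisson $3$-vector on $U_\alpha$. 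By Corollary~\ref{emre1} this amounts to showing $\iota_{dG}\eta^{[3]}_\alpha$ is Poisson on $U_\alpha$ for every local function $G$; writing $\widetilde F=e^{\sigma_\alpha}G$ and using $dG=e^{-\sigma_\alpha}\,d_\theta\widetilde F|_\alpha$ on $U_\alpha$ one gets $\iota_{dG}\eta^{[3]}_\alpha=e^{\sigma_\alpha}\bigl(\iota_{d_\theta\widetilde F}\eta^{[3]}\bigr)|_\alpha$, and the hypothesis, applied to a global function agreeing with $\widetilde F$ near any chosen point, says precisely that this bivector has vanishing Schouten bracket with itself near that point. Running this over all points of $U_\alpha$ yields the claim, so $(\mathcal{P},U_\alpha,\eta^{[3]}_\alpha,\sigma_\alpha)$ is locally conformally $3$-Nambu--Poisson, with global $3$-vector $\eta^{[3]}$ and, by \eqref{newE}, bivector $\mathcal{E}^{[2]}=-\iota_\theta\eta^{[3]}$.

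The only genuinely delicate point is this last localization in the converse: the hypothesis quantifies over \emph{global} functions $F$, whereas Corollary~\ref{emre1} must be fed an arbitrary \emph{local} test function on $U_\alpha$. The resolution, which I would write out carefully, is that being a Poisson bivector is a purely local condition, so a bump-function extension of $\widetilde F$ lets one transport the global hypothesis to each point; every remaining step is the routine translation dictionary between \eqref{hayda}, \eqref{globalcontract-3to2}, \eqref{local-3-vector}, \eqref{newE}, the definition of $d_\theta$, and Theorems~\ref{defne}, \ref{prop-LCP-Jacobi}, \ref{memre} together with Corollary~\ref{babanis}.
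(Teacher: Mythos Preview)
Your proposal is correct and, for the forward direction, follows essentially the same route as the paper: invoke Theorem~\ref{defne} for the geometric content and then perform the algebraic translation $\iota_{dF}\eta^{[3]}+F\mathcal{E}^{[2]}=\iota_{d_\theta F}\eta^{[3]}$ and $\iota_{dF}\mathcal{E}^{[2]}=-\iota_{d_\theta F\wedge\theta}\eta^{[3]}$ to recognize the pair \eqref{globalcontract-3to2} in Lichnerowicz--de~Rham form. The paper's proof is in fact just this computation, written locally and then globalized.

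Where you differ is that you actually prove the converse. The paper's proof, despite the ``if and only if'' in the statement, only establishes the ``only if'' direction: it appeals to Theorem~\ref{defne} and then verifies the claimed form of $(\Lambda,Z)$, ending with ``This completes the proof.'' Your argument for the converse---defining $\eta^{[3]}_\alpha=e^{2\sigma_\alpha}\eta^{[3]}|_\alpha$, reducing the Nambu--Poisson property to Corollary~\ref{emre1}, and then feeding the hypothesis via $\iota_{dG}\eta^{[3]}_\alpha=e^{\sigma_\alpha}(\iota_{d_\theta\widetilde F}\eta^{[3]})|_\alpha$---is the natural one, and your identification of the global-versus-local test function issue (resolved by bump-function extension, since the Poisson condition is pointwise) is a genuine point the paper does not address. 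So your treatment is more complete than the paper's on this corollary.
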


\begin{proof}
By Theorem \ref{defne}, the contraction of a locally conformally $3$-Nambu--Poisson manifold is a locally conformally Poisson manifold. Moreover, from \eqref{localcontract-3to2} and \eqref{hayt}, we have the contractions. So, start with the local equality in \eqref{localcontract-3to2} and substitute the corresponding global tensorial objects $\eta^{[3]}$ in \eqref{local-3-vector}, $F$ in \eqref{glue-func}, and $\theta$ in \eqref{titaa}: 
    \begin{equation}
    \begin{split}
    e^{\sigma_\alpha} \Lambda\vert_\alpha &=\iota_{d(e^{-\sigma_\alpha}F\vert_\alpha)} e^{2\sigma_\alpha}  \eta^{[3]} \vert_\alpha \\
        \Lambda\vert_\alpha &= \iota_{dF\vert_\alpha-F\vert_\alpha \theta\vert_\alpha} \eta^{[3]} \vert_\alpha.
    \end{split}
    \end{equation}
So, we have the first term on the right-hand side of the  exhibition \eqref{sermet} as 
\begin{equation}
     \Lambda=\iota_{dF-F\theta}\eta^{[3]}=\iota_{d_\theta F}\eta^{[3]},
\end{equation}
    where we have substituted the Lichnerowicz-de Rham differential (see Appendix \ref{sec:LdR}). 
For the second term on the right-hand side of the  exhibition \eqref{sermet}, we recall  \eqref{hayt} and referring to \eqref{newE}, compute   
\begin{equation}
    Z = \iota_{dF}\mathcal{E}^{[2]} =  - \iota_{dF} \iota _\theta  \eta^{[3]} = -\iota_{d  F \wedge \theta  }\eta^{[3]} = -  \iota_{d_\theta F \wedge \theta} \eta^{[3]}.
\end{equation} 
This completes the proof. 
\end{proof}



\subsection{Locally Conformal Nambu Hamiltonian Dynamics, and Its Bi-Hamiltonian Realization}\label{sec:bi-jac}

In this subsection, we merge the local Nambu--Poisson dynamics with the presence of local conformality, as described in the previous subsection. Accordingly, we assume  a locally conformally $3$-Nambu--Poisson manifold $ (\mathcal{P},U_\alpha,\eta_\alpha^{[3]},\sigma_\alpha)$.

\noindent \textbf{Locally Conformal $3$-Nambu--Poisson Hamiltonian Dynamics.} Referring to the Hamiltonian vector field definition  for $3$-Nambu--Poisson manifolds in \eqref{zeynep}, for two local Hamiltonian functions $H^1_\alpha$  and $H^2_\alpha$ obeying the globalization policy defined in \eqref{glue-func}, we compute the local Nambu Hamiltonian vector field as
\begin{equation} \label{luck}
\begin{split}
     X_{H^1_\alpha ,H^2_\alpha}&= \iota_{dH^1_\alpha\wedge dH^2_\alpha} \eta^{[3]}_\alpha = \iota_{dH^1_\alpha}\iota_{dH^2_\alpha} \eta^{[3]}_\alpha = \iota_{d(e^{-\sigma_\alpha}H^1\vert_\alpha)}\iota_{d(e^{-\sigma_\alpha}H^2\vert_\alpha)} e^{2\sigma_\alpha}\eta^{[3]}\vert_\alpha \\
     &  = \iota_{(dH^1\vert_\alpha-H^1\vert_\alpha d\sigma_\alpha)}\iota_{(dH^2\vert_\alpha-H^2\vert_\alpha d\sigma_\alpha)}\eta^{[3]}\vert_\alpha \\
     & = \iota_{dH^1\vert_\alpha}\iota_{dH^2\vert_\alpha}\eta^{[3]}\vert_\alpha - H^1\vert_\alpha\iota_{ d\sigma_\alpha}\iota_{dH^2\vert_\alpha}\eta^{[3]}\vert_\alpha - H^2\vert_\alpha\iota_{dH^1\vert_\alpha}\iota_{ d\sigma_\alpha}\eta^{[3]}\vert_\alpha \\
     & = \iota_{dH^1\vert_\alpha}\iota_{dH^2\vert_\alpha}\eta^{[3]}\vert_\alpha + H^1\vert_\alpha\iota_{dH^2\vert_\alpha}\iota_{ d\sigma_\alpha}\eta^{[3]}\vert_\alpha - H^2\vert_\alpha\iota_{dH^1\vert_\alpha}\iota_{ d\sigma_\alpha}\eta^{[3]}\vert_\alpha,
\end{split}
\end{equation}
where we have employed the definition of $\eta^{[3]}$ in \eqref{local-3-vector}. 
In terms of the bivector field $\mathcal{E}^{[2]}$ in \eqref{newE}, the calculation in \eqref{luck} can be written as
\begin{equation}\label{senne}
    X_{H^1_\alpha ,H^2_\alpha}=\iota_{dH^1\vert_\alpha}\iota_{dH^2\vert_\alpha}\eta^{[3]}\vert_\alpha - H^1\vert_\alpha\iota_{dH^2\vert_\alpha}\mathcal{E}^{[2]}\vert_\alpha + H^2\vert_\alpha\iota_{dH^1\vert_\alpha}\mathcal{E}^{[2]}\vert_\alpha .
\end{equation}

Notice that all the terms on the right-hand side of \eqref{senne} have global realizations. 
This observation naturally leads us to assume that the vector fields $\{X_{H^1_\alpha ,H^2_\alpha}\}$ generating the local particle motion are global. 
In other words, two local Nambu Hamiltonian dynamics, $X_{H^1_\alpha ,H^2_\alpha}$ and $X_{H^1_\beta ,H^2_\beta}$, coincide on the overlapping region $U_\alpha \cap U_\beta$. 
Equivalently, for the global Hamiltonian functions $H^1$ and $H^2$ with globalization property \eqref{glue-func}, we conclude that there exists a global vector field $X_{H^1 ,H^2}$ such that
\begin{equation}
   X_{H^1 ,H^2}\vert_{\alpha} = X_{H^1_\alpha ,H^2_\alpha}.
\end{equation}
Therefore, from \eqref{senne}, the locally conformal Nambu Hamiltonian dynamics $X_{H^1 ,H^2}$ is obtained to be 
\begin{equation}
    X_{H^1 ,H^2}=\iota_{dH^1}\iota_{dH^2}\eta^{[3]} - H^1\iota_{dH^2}\mathcal{E}^{[2]} + H^2\iota_{dH^1}\mathcal{E}^{[2]}
\end{equation}
Hence, the action of the vector field on a function, or in other terms, the dynamics of the observable $F$ is recorded as
\begin{equation}
    X_{H^1 ,H^2}(F)=\iota_{dH^1}\iota_{dH^2}\eta^{[3]}(F) - H^1\iota_{dH^2}\mathcal{E}^{[2]}(F) + H^2\iota_{dH^1}\mathcal{E}^{[2]}(F).
\end{equation}
Let us once more consider the locally conformal Nambu Hamiltonian vector field but this time we rewrite it in terms of locally conformal Nambu--Poisson $3$-bracket as
\begin{equation}
    X_{H^1 ,H^2}(F)=\{F,H^1,H^2\} - F\mathcal{E}(dH^1 ,dH^2)
\end{equation}

\noindent \textbf{Locally Conformal Bi-Hamiltonian Dynamics.} 
As discussed in Subsection~\ref{sec:BiH}, a Nambu Hamiltonian
dynamics can be decomposed into a bi-Hamiltonian structure.
We now employ this approach in the locally conformal setting.
Starting from a local $3$-Nambu--Poisson structure
$(U_\alpha,\eta^{[3]}_\alpha)$, we contract the local Nambu--Poisson
$3$-vector field $\eta^{[3]}_\alpha$ with the local Hamiltonian functions
$H^2_\alpha$ and $H^1_\alpha$ in order to obtain two local Poisson bivector
fields.
We then show that these compatible local bivectors globalize naturally and
give rise to a bi-Hamiltonian dynamics.

We begin with the local bivector field obtained by contracting
$\eta^{[3]}_\alpha$ with the local Hamiltonian function $H^2_\alpha$:
\begin{equation}
\begin{split}
\Lambda^2_\alpha
&= \iota_{dH^2_\alpha}\eta^{[3]}_\alpha \\
&= \iota_{d(e^{-\sigma_\alpha}H^2|_\alpha)}\,
e^{2\sigma_\alpha}\eta^{[3]}|_\alpha \\
&= e^{\sigma_\alpha}\,
\iota_{(dH^2|_\alpha - H^2|_\alpha\, d\sigma_\alpha)}\eta^{[3]}|_\alpha \\
&= e^{\sigma_\alpha}\,
\iota_{dH^2|_\alpha}\eta^{[3]}|_\alpha
- e^{\sigma_\alpha}H^2|_\alpha\,
\iota_{d\sigma_\alpha}\eta^{[3]}|_\alpha .
\end{split}
\end{equation}
Multiplying both sides by the conformal factor $e^{-\sigma_\alpha}$ yields
\begin{equation}
\begin{split}
e^{-\sigma_\alpha}\Lambda^2_\alpha
&= \iota_{dH^2|_\alpha}\eta^{[3]}|_\alpha
- H^2|_\alpha\,\iota_{d\sigma_\alpha}\eta^{[3]}|_\alpha \\
&= \iota_{dH^2|_\alpha}\eta^{[3]}|_\alpha
+ H^2|_\alpha\,\mathcal{E}^{[2]}|_\alpha .
\end{split}
\end{equation}
Both sides of the equation admit global realizations, provided that there
exists a global bivector field $\Lambda^{2}$ whose local representatives
satisfy
\[
\Lambda^2_\alpha = e^{\sigma_\alpha}\,\Lambda^2|_\alpha .
\]
Accordingly, we obtain the global expression
\begin{equation}
\Lambda^2 = \iota_{dH^2}\eta^{[3]} + H^2\mathcal{E}^{[2]},
\end{equation}
which coincides with the form given in \eqref{dogu-1}.

A similar argument applies when contracting $\eta^{[3]}_\alpha$ with the local
Hamiltonian function $H^1_\alpha$:
\begin{equation}
\begin{split}
e^{-\sigma_\alpha}\Lambda^1_\alpha
&= -e^{-\sigma_\alpha}\iota_{dH^1_\alpha}\eta^{[3]}_\alpha \\
&= -\iota_{dH^1|_\alpha}\eta^{[3]}|_\alpha
- H^1|_\alpha\,\mathcal{E}^{[2]}|_\alpha .
\end{split}
\end{equation}
Assuming the existence of a global bivector field $\Lambda^{1}$ with local
representatives
\[
\Lambda^1_\alpha = e^{\sigma_\alpha}\,\Lambda^1|_\alpha ,
\]
we obtain the global bivector field
\begin{equation}
\Lambda^1 = -\iota_{dH^1}\eta^{[3]} - H^1\mathcal{E}^{[2]},
\end{equation}
which agrees with \eqref{dogu-2}.

We are now ready to assemble the local bi-Hamiltonian dynamics.
We first consider the local Poisson structure $\Lambda^2_\alpha$ and the
Hamiltonian dynamics generated by the local Hamiltonian function
$H^1_\alpha$:
\begin{equation}
\begin{split}
    \frac {dx}{dt} &= \iota_{dH^1_\alpha}\Lambda^2_\alpha = \iota_{d(e^{-\sigma_\alpha}H^1\vert_\alpha)}e^{\sigma_\alpha}\Lambda^2\vert_\alpha \\
    &= \iota_{dH^1\vert_\alpha}\Lambda^2\vert_\alpha - H^1\vert_\alpha\iota_{ d\sigma_\alpha}\Lambda^2\vert_\alpha
\end{split}
\end{equation}
As in \eqref{LCP-vector}, introducing the global vector field $Z^2$ with local
representation $Z^2|_\alpha=\iota_{d\sigma_\alpha}\Lambda^2|_\alpha$, the
dynamics attains the global form
\begin{equation}
\frac{dx}{dt} = \iota_{dH^1}\Lambda^2 - H^1 Z^2 .
\end{equation}
Moreover, the vector field $Z^2$ can be computed explicitly as
\begin{equation} \begin{split} Z^2 = \iota_{ \theta}\Lambda^2 &= \iota_{ \theta}(\iota_{dH^2}\eta^{[3]} + H^2\mathcal{E}^{[2]}) \\ & = \iota_{ \theta}\iota_{dH^2}\eta^{[3]} + H^2\iota_{ \theta}\mathcal{E}^{[2]} \\ & = -\iota_{dH^2}\iota_{ \theta}\eta^{[3]} - H^2\iota_{ \theta}\iota_{ \theta}\eta^{[3]} \\ & = \iota_{dH^2}\mathcal{E}^{[2]}. \end{split} \end{equation}
This vector field coincides with the one appearing in \eqref{dogu-1}.

Proceeding similarly for the local Poisson structure $\Lambda^1_\alpha$, the
local dynamics generated by $H^2_\alpha$ is given by
\begin{equation}
\frac{dx}{dt}
= \iota_{dH^2|_\alpha}\Lambda^1|_\alpha
- H^2|_\alpha\,\iota_{d\sigma_\alpha}\Lambda^1|_\alpha .
\end{equation}
The local term
$\iota_{d\sigma_\alpha}\Lambda^1|_\alpha$ defines a global vector field $Z^1$.
The corresponding global dynamics then takes the form
\begin{equation}
\frac{dx}{dt} = \iota_{dH^2}\Lambda^1 - H^2 Z^1 .
\end{equation}
A direct computation shows that the global vector field $Z^1$ agrees with the vector field appearing in \eqref{dogu-2}, that is,
\begin{equation}
Z^1 = -\iota_{dH^1}\mathcal{E}^{[2]} .
\end{equation}

This analysis leads to the following conclusion:
locally conformal bi-Hamiltonian dynamics arises as a particular instance of
the Jacobi bi-Hamiltonian dynamics described in
Subsection~\ref{ebru}.
This result is expected, since locally conformally $3$-Nambu--Poisson
manifolds are precisely $3$-Nambu--Jacobi manifolds.

\subsection{Analyzing Locally Conformally k-Nambu--Poisson Manifolds}

Let $(\mathcal{P},U_\alpha,\eta_\alpha^{[k]},\sigma_\alpha)$ be a locally
conformally $k$-Nambu--Poisson manifold, so that the local $k$-vector fields
$\eta^{[k]}_\alpha$ glue to a global $k$-vector field $\eta^{[k]}$ according
to \eqref{gs}.
Suppose further that we are given local functions $F^i_\alpha$, $i=1,\dots,k$,
chosen compatibly with the locally conformal structure, as in the Poisson and
$3$-Nambu--Poisson cases, in the sense that they glue to global functions
$F^i$ via
\begin{equation}\label{kalinkamaya}
    F^i\big|_\alpha = e^{\sigma_\alpha} F^i_\alpha ,\qquad i=1,\dots,k.
\end{equation}
With the above choice of local functions $F^i_\alpha$, the local
Nambu--Poisson $k$-bracket $\{\bullet,\dots,\bullet\}_\alpha$ determined by the
local $k$-vector field $\eta^{[k]}_\alpha$ takes the form
\begin{equation}
\begin{split}
\{ F^1_\alpha,\dots,F^k_\alpha \}_\alpha
&= \eta^{[k]}_\alpha (dF^1_\alpha,\dots,dF^k_\alpha) \\
&= e^{(k-1)\sigma_\alpha}\,
\eta^{[k]}\big|_\alpha
\big(d(e^{-\sigma_\alpha}F^1|_\alpha),\dots,
d(e^{-\sigma_\alpha}F^k|_\alpha)\big) \\
&= e^{-\sigma_\alpha}\,
\eta^{[k]}\big|_\alpha
\big(dF^1|_\alpha - F^1|_\alpha\, d\sigma_\alpha,\dots,
dF^k|_\alpha - F^k|_\alpha\, d\sigma_\alpha\big),
\end{split}
\end{equation}
where we have employed the identities \eqref{gs} and \eqref{kalinkamaya}.
 Then, multiplying both sides by $e^{\sigma_\alpha}$, we obtain
\begin{equation}\label{desperado}
\begin{split}
       e^{\sigma_\alpha} \{ F^1_\alpha,\dots,F^k_\alpha \}_\alpha   &= \eta^{[k]}\vert_\alpha(dF^1\vert_\alpha,\dots,dF^k\vert_\alpha ) - F^1\vert_\alpha\eta^{[k]}\vert_\alpha(  d\sigma_\alpha,dF^2\vert_\alpha\dots,dF^k\vert_\alpha )  \\
    & \qquad \qquad  - F^2\vert_\alpha\eta^{[k]}\vert_\alpha(  dF^1\vert_\alpha,d\sigma_\alpha,\dots,dF^k\vert_\alpha ) \\
    & \qquad \qquad- \dots - F^k\vert_\alpha\eta^{[k]}\vert_\alpha(dF^1\vert_\alpha,\dots,dF^{k-1}\vert_\alpha,d\sigma_\alpha)\\
    &  = \eta^{[k]}\vert_\alpha(dF^1\vert_\alpha,\dots,dF^k\vert_\alpha ) \\
    & \qquad \qquad-\sum^k_{i=1}(-1)^{k-i}F^i\vert_\alpha\eta^{[k]}\vert_\alpha(dF^1\vert_\alpha,\dots,\widehat{dF^i}\vert_\alpha,\dots,dF^k\vert_\alpha,d\sigma_\alpha) \\
    &  = \eta^{[k]}\vert_\alpha(dF^1\vert_\alpha,\dots,dF^k\vert_\alpha ) \\
    & \qquad \qquad-\sum^k_{i=1}(-1)^{k-i}F^i\vert_\alpha\iota_{d\sigma_\alpha}\eta^{[k]}\vert_\alpha(dF^1\vert_\alpha,\dots,\widehat{dF^i}\vert_\alpha,\dots,dF^k\vert_\alpha) \\
    &  = \eta^{[k]}\vert_\alpha(dF^1\vert_\alpha,\dots,dF^k\vert_\alpha ) \\
    & \qquad \qquad-\sum^k_{i=1}(-1)^{k-i}F^i\vert_\alpha\iota_{\theta_\alpha}\eta^{[k]}\vert_\alpha(dF^1\vert_\alpha,\dots,\widehat{dF^i}\vert_\alpha,\dots,dF^k\vert_\alpha).
    \end{split}
\end{equation}
Here, $\theta$ is the Lee form in \eqref{titaa}. The left-hand side of \eqref{desperado} is the multiplication of a local function by a locally conformal factor, therefore obeying our policy displayed in \eqref{kalinkamaya}. This leads us to introduce a global  $k$-bracket $\{\bullet,\dotsm,\bullet\}$ on the whole manifold. Note that the restriction of this global bracket to a local chart $U_\alpha$ is 
 \begin{equation}\label{sennure}
    \{F^1,\dotsm,F^k\}\vert_\alpha =  e^{\sigma_\alpha}
    \{F^1_\alpha,\dotsm,F^k_\alpha\}_\alpha.
 \end{equation}
 Furthermore, an immediate observation helps us to determine a global $(k-1)$-vector field $\mathcal{E}^{[k-1]}$ admitting the local characterization 
\begin{equation}\label{newE-k}
    \mathcal{E}^{[k-1]}\vert_\alpha = (-1)^k\iota_{d\sigma_\alpha}\eta^{[k]}\vert_\alpha=(-1)^k\iota_{\theta\vert_\alpha}\eta^{[k]}\vert_\alpha.
\end{equation}
As a result, referring to the global $k$-vector field 
$\eta ^{[k]}$ and the global $(k-1)$-vector field $\mathcal{E}^{[k-1]}$ we arrive at the following global $k$-bracket of the global functions $F^i$ on $\mathcal{P}$ as
 \begin{equation}\label{malinkamaya}
 \begin{split}
     \{ F^1,\dots,F^k \} &=  \eta (dF^1,\dots,dF^k) + \sum^k_{i=1}(-1)^{i-1}F^i\mathcal{E}(dF^1,\dots\widehat{dF^i},\dots,dF^k)
 \end{split}
\end{equation}
 for $i=1,\ldots,k$. We call this bracket a locally conformal Nambu--Poisson $k$-bracket generated by the pair $(\eta ^{[k]},\mathcal{E}^{[k]})$. 

 \noindent \textbf{Locally Conformally $k$-Nambu--Poisson Manifold is $k$-Nambu--Jacobi.} In Theorem~\ref{memre}, we showed that, for $k=3$, the construction described
above yields a locally conformally $3$-Nambu--Poisson manifold which is, in
fact, a $3$-Nambu--Jacobi manifold. In this section, we prove that the same
correspondence holds for arbitrary order $k$. More precisely, we show that
the bracket defined in~\eqref{malinkamaya} satisfies the defining properties
of a Nambu--Jacobi $k$-bracket. Following the same approach as in the $k=3$ case (see Subsection~\ref{subsec:LC-3-NP}), we begin by identifying the
relation between a locally conformally $k$-Nambu--Poisson manifold and its
reduction to a locally conformally $(k-1)$-Nambu--Poisson manifold.

\begin{theorem}\label{defne2}
Let $(\mathcal{P},U_\alpha,\eta_\alpha^{[k]},\sigma_\alpha)$ be a locally conformally $k$-Nambu--Poisson manifold.
For any family $\{F_\alpha\}$ of local smooth functions on $\mathcal{P}$, the contractions of the local $k$-vector fields define a family of $(k-1)$-vector fields
\begin{equation}\label{localcontract-k-to-k-1}
  \{ \eta^{[k-1]}_\alpha \} = \{\iota_{dF_\alpha}\eta^{[k]}_\alpha \}.
\end{equation}
Then the family $\{(U_\alpha,\eta^{[k-1]}_\alpha)\}$ determines a locally conformal $(k-1)$-Nambu--Poisson structure on $\mathcal{P}$,
with the same conformal transition functions $\{\sigma_\alpha\}$ inherited from the original $k$-structure. 
\end{theorem}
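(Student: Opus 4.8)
The plan is to replicate, for arbitrary order $k$, the two-part argument behind the $k=3$ statement (Theorem~\ref{defne}). First I would verify that each contracted field $\eta^{[k-1]}_\alpha=\iota_{dF_\alpha}\eta^{[k]}_\alpha$ is a genuine \emph{local} Nambu--Poisson $(k-1)$-vector field on $U_\alpha$; this is immediate from Theorem~\ref{emre}, which asserts precisely that the contraction of a Nambu--Poisson $k$-vector field by any exact one-form is Nambu--Poisson of order $k-1$. For $k=3$ this is exactly the appeal to Corollary~\ref{emre1} in the proof of Theorem~\ref{defne}.

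The substance of the argument is the second part: checking that the family $\{\eta^{[k-1]}_\alpha\}$ satisfies the locally conformal gluing relation~\eqref{usta} \emph{for order $k-1$}, that is, with conformal weight $k-2$ and with the same potentials $\{\sigma_\alpha\}$. On an overlap $U_\alpha\cap U_\beta\neq\emptyset$ I would substitute the rescaling~\eqref{gs}, written as $\eta^{[k]}_\alpha=e^{(k-1)\sigma_\alpha}\,\eta^{[k]}|_\alpha$, together with the globalization rule~\eqref{kalinkamaya}, $F_\alpha=e^{-\sigma_\alpha}F|_\alpha$, into $e^{-(k-2)\sigma_\alpha}\eta^{[k-1]}_\alpha = e^{-(k-2)\sigma_\alpha}\iota_{d(e^{-\sigma_\alpha}F|_\alpha)}e^{(k-1)\sigma_\alpha}\eta^{[k]}|_\alpha$. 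Using $d(e^{-\sigma_\alpha}F|_\alpha)=e^{-\sigma_\alpha}(dF|_\alpha-F|_\alpha\,d\sigma_\alpha)$ and $\theta|_\alpha=d\sigma_\alpha$ from~\eqref{titaa}, the scalar exponents accumulate to $-(k-2)-1+(k-1)=0$, and the expression collapses to $\iota_{dF|_\alpha}\eta^{[k]}|_\alpha - F|_\alpha\,\iota_{\theta|_\alpha}\eta^{[k]}|_\alpha$, which via~\eqref{newE-k}, i.e.\ $\iota_{\theta|_\alpha}\eta^{[k]}|_\alpha=(-1)^k\mathcal{E}^{[k-1]}|_\alpha$, equals $\iota_{dF|_\alpha}\eta^{[k]}|_\alpha - (-1)^k F|_\alpha\,\mathcal{E}^{[k-1]}|_\alpha$. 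Since every tensor on this right-hand side ($\eta^{[k]}$, $\mathcal{E}^{[k-1]}$, $F$, $dF$) is globally defined, the right-hand side is chart-independent; performing the same computation on $U_\beta$ and equating then yields $e^{-(k-2)\sigma_\alpha}\eta^{[k-1]}_\alpha=e^{-(k-2)\sigma_\beta}\eta^{[k-1]}_\beta$, which is precisely~\eqref{usta} with $k$ replaced by $k-1$. For $k=3$ this reproduces the chain of equalities~\eqref{hayda}.

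To conclude, I would observe that the potentials $\{\sigma_\alpha\}$ already satisfy $d\sigma_\alpha=d\sigma_\beta$ on overlaps by~\eqref{dsigma}, hence remain local potentials of the same Lee form $\theta$; the induced transition scalars $e^{-(k-2)(\sigma_\beta-\sigma_\alpha)}$ then inherit the cocycle condition~\eqref{cocycle1} from the original ones. Thus $\{(U_\alpha,\eta^{[k-1]}_\alpha)\}$ together with $\{\sigma_\alpha\}$ satisfies the definition of a locally conformally $(k-1)$-Nambu--Poisson manifold, as claimed.

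There is no conceptual obstacle here; the whole difficulty is bookkeeping. The two points requiring care are: confirming that the conformal weight drops by exactly one (so that the powers of $e^{\sigma_\alpha}$ cancel completely and no residual exponential survives on the right), and tracking the sign $(-1)^k$ relating $\iota_\theta\eta^{[k]}$ to the induced lower-order field $\mathcal{E}^{[k-1]}$ — the same sign that makes the leftover term consistent with the $k$-bracket~\eqref{malinkamaya}.
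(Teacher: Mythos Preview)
Your proposal is correct and follows essentially the same approach as the paper: first invoke Theorem~\ref{emre} to ensure each $\eta^{[k-1]}_\alpha$ is locally Nambu--Poisson, then substitute $\eta^{[k]}_\alpha=e^{(k-1)\sigma_\alpha}\eta^{[k]}|_\alpha$ and $F_\alpha=e^{-\sigma_\alpha}F|_\alpha$ to show that $e^{-(k-2)\sigma_\alpha}\eta^{[k-1]}_\alpha=\iota_{dF|_\alpha}\eta^{[k]}|_\alpha+(-1)^{k-1}F|_\alpha\mathcal{E}^{[k-1]}|_\alpha$ is chart-independent. The paper's proof is the same computation (equations~\eqref{haydari}--\eqref{haydarhaydar}), with identical bookkeeping of the exponent drop and the sign $(-1)^{k-1}$.
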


\begin{proof} Start with a local Nambu--Poisson $k$-vector field $\eta_\alpha^{[k]}$. By Theorem \ref{emre}, we know that the local $(k-1)$-vector field 
\begin{equation}\label{pelo}
    \eta^{[k-1]}_\alpha = \iota_{dF_\alpha}\eta^{[k]}_\alpha
\end{equation}
is a local Poisson $(k-1)$-vector field. Thus, \eqref{localcontract-k-to-k-1} gives a family of local Nambu--Poisson $(k-1)$-vector fields. 
Now we will show that, on an arbitrary intersection, pair of local $(k-1)$-Nambu--Poisson structures satisfy the conformal relation
    \begin{equation}\label{emirak}
       \eta^{[k-1]}\vert_\alpha = e^{-(k-2)\sigma_\alpha}\,\eta^{[k-1]}_\alpha \;=\; e^{-(k-2)\sigma_\beta}\,\eta^{[k-1]}_\beta = \eta^{[k-1]}\vert_\beta. 
    \end{equation}
To have this, considering the local chart $(U_\alpha,\eta_\alpha^{[k-1]})$, we compute
\begin{equation}\label{haydari}
\begin{split}
  e^{-(k-2)\sigma_\alpha}\eta^{[k-1]}_\alpha& = e^{-(k-2)\sigma_\alpha}\iota_{dF_\alpha}\eta^{[k]}_\alpha = e^{-(k-2)\sigma_\alpha}\iota_{d(e^{-\sigma_\alpha}F\vert_\alpha)}e^{(k-1)\sigma_\alpha}\eta^{[k]}\vert_\alpha \\
     & = \iota_{(dF\vert_\alpha - F\vert_\alpha d\sigma_\alpha)}\eta^{[k]}\vert_\alpha \\
     & = \iota_{dF\vert^{[k]}_\alpha}\eta^{[k]}\vert_\alpha - F\vert_\alpha\iota_{d\sigma_\alpha}\eta^{[k]}\vert_\alpha \\
     & = \iota_{dF\vert_\alpha}\eta^{[k]}\vert_\alpha + (-1)^{k-1}F\vert_\alpha\mathcal{E}^{[k-1]}\vert_\alpha.
 \end{split}
\end{equation}
Here $\mathcal{E}^{[k-1]}\vert_\alpha$ is the $(k-1)$-vector field defined in \eqref{newE-k}. A similar computation for the local chart $(U_\beta,\eta_\beta^{[k-1]})$ gives
\begin{equation}\label{haydarhaydar}
  e^{-(k-2)\sigma_\beta}\eta_\beta^{[k-1]} = \iota_{dF\vert_\beta}\eta^{[k]}\vert_\beta + F\vert_\beta\mathcal{E}^{[k-1]}\vert_\beta.
\end{equation}
The compatibility condition \eqref{bibi} for $(\mathcal{P},U_\alpha,\eta_\alpha^{[k]},\sigma_\alpha)$ and the global definition for \eqref{newE-k} reads that  \eqref{haydari} and \eqref{haydarhaydar} are equal. So, we see that the condition \eqref{emirak} is satisfied. Since this relation holds for all of the charts, the family $\{(U_\alpha,\eta^{[k-1]}_\alpha)\}$ of local $(k-1)$-Nambu--Poisson structures determines a locally conformally $(k-1)$-Nambu--Poisson manifold. 
\end{proof}

See that for $k=3$, Theorem \ref{defne2} turns out to be Theorem \ref{defne}.  

In Theorem \ref{defne2}, we established that the contraction of a locally conformally $k$-Nambu--Poisson manifold yields a locally conformally $(k-1)$-Nambu--Poisson manifold.  
By applying this result iteratively, one observes that performing $(k-2)$ successive contractions on a locally conformally $k$-Nambu--Poisson manifold produces a locally conformally $2$-Nambu--Poisson manifold, namely, a locally conformally Poisson manifold.  
Explicitly, the hierarchy of successive contractions reads:
\begin{equation}\label{alper+}
\begin{split}
   & \eta^{[k-1]}_\alpha = \iota_{dF^k_\alpha}\eta^{[k]}_\alpha, \\ 
   & \eta^{[k-2]}_\alpha = \iota_{dF^{k-1}_\alpha}\eta^{[k-1]}_\alpha = \iota_{dF^{k-1}_\alpha\wedge dF^k_\alpha}\eta^{[k]}_\alpha, \\
   & \;\vdots \\[-1mm]
   & \eta^{[2]}_\alpha =  \iota_{dF^3_\alpha}\eta^{[3]}_\alpha = \iota_{dF^3_\alpha\wedge\ldots\wedge dF^{k-1}_\alpha\wedge dF^k_\alpha}\eta^{[k]}_\alpha .
\end{split}
\end{equation}
Consequently, we obtain a family of local Poisson structures
\begin{equation}
\{(U_\alpha,\eta^{[2]}_\alpha)\} = \{(U_\alpha,\Lambda_\alpha)\},
\end{equation}
which naturally determines a locally conformal Poisson structure on
$\mathcal{P}$. We are now ready to state the following theorem,
asserting that locally conformally $k$-Nambu--Poisson manifolds are
$k$-Nambu--Jacobi manifolds.

\begin{theorem}\label{memre2}
    Locally conformally $k$-Nambu--Poisson manifolds are $k$-Nambu--Jacobi manifolds for all $k$.

\end{theorem}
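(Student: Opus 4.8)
The plan is to prove the statement by induction on the Nambu order $k$, using Theorem~\ref{defne2} to descend one order and the recursive criterion recorded in \eqref{gomis} (established inside the proof of Theorem~\ref{beris}) to climb back up. The base of the induction is $k=2$: a locally conformally $2$-Nambu--Poisson manifold is exactly a locally conformally Poisson manifold and a $2$-Nambu--Jacobi manifold is exactly a Jacobi manifold, so the assertion is precisely Theorem~\ref{prop-LCP-Jacobi}. (The case $k=3$, recovered as the first instance of the inductive step below, is Theorem~\ref{memre}; indeed the argument below is the order-$k$ replica of the commutative diagram~\eqref{geomdiagram--}.)

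For the inductive step, assume the theorem holds for order $k-1$ and let $(\mathcal{P},U_\alpha,\eta^{[k]}_\alpha,\sigma_\alpha)$ be a locally conformally $k$-Nambu--Poisson manifold. Gluing the local $k$-vector fields as in \eqref{gs} yields the global $k$-vector field $\eta^{[k]}$, and \eqref{newE-k} produces the associated global $(k-1)$-vector field $\mathcal{E}^{[k-1]}=(-1)^k\iota_\theta\eta^{[k]}$; together they generate the locally conformal Nambu--Poisson $k$-bracket \eqref{malinkamaya}. By the recursive criterion \eqref{gomis}, the pair $(\eta^{[k]},\mathcal{E}^{[k-1]})$ is a $k$-Nambu--Jacobi pair if and only if, for every function $F$, the contracted pair
\[
(\eta^{[k-1]},\mathcal{E}^{[k-2]})
=\bigl(\,\iota_{dF}\eta^{[k]}+(-1)^{k+1}F\,\mathcal{E}^{[k-1]},\ \iota_{dF}\mathcal{E}^{[k-1]}\,\bigr)
\]
is a $(k-1)$-Nambu--Jacobi pair. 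Now invoke Theorem~\ref{defne2}: with local functions $F_\alpha$ chosen according to \eqref{kalinkamaya}, the family $\{(U_\alpha,\iota_{dF_\alpha}\eta^{[k]}_\alpha)\}$ is a locally conformally $(k-1)$-Nambu--Poisson manifold whose glued $(k-1)$-vector field is exactly $\eta^{[k-1]}=\iota_{dF}\eta^{[k]}+(-1)^{k-1}F\,\mathcal{E}^{[k-1]}$, as computed in \eqref{haydari} (note $(-1)^{k-1}=(-1)^{k+1}$). By the induction hypothesis this manifold is $(k-1)$-Nambu--Jacobi, and by \eqref{newE-k} applied one order down its associated $(k-2)$-vector field is $(-1)^{k-1}\iota_\theta\eta^{[k-1]}$.

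The only point that needs a computation---and the single genuine obstacle---is to check that this $(k-2)$-vector field coincides with the $\mathcal{E}^{[k-2]}=\iota_{dF}\mathcal{E}^{[k-1]}$ demanded by \eqref{gomis}, i.e.\ that the gluing and contraction procedures commute at order $k$. Substituting $\eta^{[k-1]}$ and using that contracting the one-form $\theta$ twice annihilates a multivector, $\iota_\theta\iota_\theta\eta^{[k]}=0$, together with the anticommutativity $\iota_\theta\iota_{dF}=-\iota_{dF}\iota_\theta$, one obtains
\begin{align*}
(-1)^{k-1}\iota_\theta\eta^{[k-1]}
&=(-1)^{k-1}\iota_\theta\iota_{dF}\eta^{[k]}
=(-1)^{k}\iota_{dF}\iota_\theta\eta^{[k]} \\
&=\iota_{dF}\mathcal{E}^{[k-1]}
=\mathcal{E}^{[k-2]},
\end{align*}
where the last but one equality uses $\mathcal{E}^{[k-1]}=(-1)^k\iota_\theta\eta^{[k]}$. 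Hence $(\eta^{[k-1]},\mathcal{E}^{[k-2]})$ is a $(k-1)$-Nambu--Jacobi pair for every $F$, and the criterion \eqref{gomis} lifts this to the conclusion that $(\eta^{[k]},\mathcal{E}^{[k-1]})$ is a $k$-Nambu--Jacobi pair. Therefore the locally conformally $k$-Nambu--Poisson manifold is a $k$-Nambu--Jacobi manifold, which closes the induction and completes the proof.
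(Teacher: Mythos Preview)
Your proof is correct and follows essentially the same route as the paper: both arguments use Theorem~\ref{defne2} to descend from a locally conformally $k$-Nambu--Poisson manifold to a locally conformally $(k-1)$-Nambu--Poisson manifold, verify via the computation $(-1)^{k-1}\iota_\theta\eta^{[k-1]}=\iota_{dF}\mathcal{E}^{[k-1]}$ that the glued $(k-2)$-vector field agrees with the one prescribed by \eqref{gomis}, and then invoke the recursive criterion of Theorem~\ref{beris} together with the base case $k=2$ (Theorem~\ref{prop-LCP-Jacobi}). The only cosmetic difference is that you phrase the descent as a one-step induction while the paper phrases it as an iteration down to the Poisson level.
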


\begin{proof} The iteration described in \eqref{alper+} shows that applying $(k-2)$ successive
contractions to a locally conformally $k$-Nambu--Poisson manifold
$(\mathcal{P},U_\alpha,\eta_\alpha^{[k]},\sigma_\alpha)$ yields a locally
conformal Poisson structure on $\mathcal{P}$.

Moreover, by Theorem~\ref{prop-LCP-Jacobi}, every locally conformally Poisson manifold is a Jacobi manifold. Referring to the notation of the present subsection, we identify the global pair $(\Lambda,\mathcal{E})$ as the
$(k-2)$-th contraction of the global pair
$(\eta^{[k]},\mathcal{E}^{[k-1]})$.

Indeed, from \eqref{emirak} and \eqref{haydari} we obtain
\begin{equation}\label{etatpur}
\eta^{[k-1]}\big|_\alpha
= \iota_{dF^k|_\alpha}\eta^{[k]}\big|_\alpha
+ (-1)^{k-1} F^k|_\alpha \mathcal{E}^{[k-1]}\big|_\alpha .
\end{equation}
Further, contracting $\eta^{[k-1]}|_\alpha$ with $d\sigma_\alpha$ gives
\begin{equation}
    \begin{split}
        \mathcal{E}^{[k-2]}\vert_\alpha &= (-1)^{k-1}\iota_{d\sigma_\alpha}\eta^{[k-1]}\vert_\alpha \\
        & = (-1)^{k-1}\iota_{d\sigma_\alpha}\iota_{dF^k\vert_\alpha}\eta^{[k]}\vert_\alpha + F^k\vert_\alpha\iota_{d\sigma_\alpha} \mathcal{E}^{[k-1]}\vert_\alpha \\
        & = (-1)^{k}\iota_{dF^k\vert_\alpha}\iota_{d\sigma_\alpha}\eta^{[k]}\vert_\alpha \\
        & = (-1)^{k}\iota_{dF^k\vert_\alpha}\mathcal{E}^{[k-1]}\vert_\alpha.
    \end{split}
\end{equation}

Thus, the global pair $(\eta^{[k-1]},\mathcal{E}^{[k-2]})$ defining the locally
conformal $(k-1)$-Nambu--Poisson structure is given by
\begin{equation}\label{pp1}
\eta^{[k-1]} = \iota_{dF^k}\eta^{[k]} + (-1)^{k-1}F^k\mathcal{E}^{[k-1]},
\qquad
\mathcal{E}^{[k-2]} = (-1)^{k}\iota_{dF^k}\mathcal{E}^{[k-1]}.
\end{equation}
By Theorem~\ref{beris}, the pair $(\eta^{[k]},\mathcal{E}^{[k-1]})$ defines a
$k$-Nambu--Jacobi structure if and only if
$(\eta^{[k-1]},\mathcal{E}^{[k-2]})$ defines a $(k-1)$-Nambu--Jacobi structure.
Iterating this argument completes the proof.
\end{proof}

In particular, for $k=3$, Theorem \ref{memre2} becomes Theorem \ref{memre}. We present the following picture relating locally conformal Nambu--Poisson structures and associated Nambu--Jacobi structures: 
\begin{landscape}
\topskip0pt
\vspace*{\fill}
\begin{equation*}\label{geomdiagram--n}
    \xymatrix{
    \{\eta^{[k]}_\alpha\}
    \ar@/^2pc/[rr]^{\text{Contraction } \eqref{pelo}}
    \ar[ddd]_{ {\scriptsize\begin{matrix} \text{glue up}    \\ \text{with} \\ \text{Theorem }\ref{memre2}       \end{matrix}}}  
    \ar@{<->}[rr]_{\text{Theorem } \ref{defne2}}
    &&  
    \{\eta^{[k-1]}_\alpha\} \ar@/^2pc/[rr]^{\text{Contraction } \eqref{pelo}}
    \ar[ddd]_{ {\scriptsize\begin{matrix} \text{glue up} \\ \text{with} \\ \text{Theorem }\ref{memre2}  \end{matrix}}} 
    \ar@{<->}[rr]_{\text{Theorem } \ref{defne2}}
    &&
    \{\eta^{[k-2]}_\alpha\}
    \ar[ddd]_{ {\scriptsize\begin{matrix} \text{glue up}    \\ \text{with} \\ \text{Theorem }\ref{memre2}       \end{matrix}}}  
     & 
    \dots 
       & 
    \{\eta^{[3]}_\alpha \} \ar@/^2pc/[rrr]^{\text{Contraction } \eqref{pelo} \text{ or } \eqref{localcontract-3to2}}
    \ar[ddd]_{ {\scriptsize
    \begin{matrix} \text{glue up} \\ \text{with} \\ \text{Theorem }\ref{memre}   \\ 
    \text{or} \\ \text{Theorem }\ref{memre2}  
    \end{matrix}}}
    \ar@{<->}[rrr]_{\text{Theorem } \eqref{defne2} \text{ or } \ref{defne}}
    && &
    \{\Lambda_\alpha\}=\{\eta^{[2]}_\alpha \}
    \ar[ddd]^{ {\scriptsize\begin{matrix} \text{glue up} \\ \text{with} \\ \text{Theorem }\ref{prop-LCP-Jacobi}  \end{matrix}}} 
    \\
    \\
    \\
    (\eta^{[k]},\mathcal{E}^{[k-1]})
    \ar@/_2pc/[rr]_{\text{Contraction } \eqref{pp1}} \ar@{<->} [rr]^{\text{Theorem } \ref{beris}} 
    &&  
    (\eta^{[k-1]},\mathcal{E}^{[k-2]}) \ar@/_2pc/[rr]_{\text{Contraction } \eqref{pp1}}
    \ar@{<->} [rr]^{\text{Theorem } \ref{beris}}
    &&   
     (\eta^{[k-2]},\mathcal{E}^{[k-3]})
      & \dots & 
    (\eta^{[3]},\mathcal{E}^{[2]}) 
    \ar@{<->} [rrr]^
    { {\scriptsize
    \begin{matrix} \text{Theorem } \ref{beris}  \\ 
    \text{or} \\ \text{Corollary } \ref{babanis}
    \end{matrix} }}
        \ar@/_2pc/[rrr]_{  \text{Contraction } 
    \eqref{pp1}    
    \text{or}  
    \eqref{globalcontract-3to2}
  }
    && & (\Lambda,Z)=(\eta^{[2]},\mathcal{E}^{[1]})}
\end{equation*}
\vspace*{\fill}
\end{landscape}
Notice that the upper row of the commutative diagram establishes the relations between locally conformal $k$-Nambu--Poisson structures and their locally conformal $(k-1)$-Nambu--Poisson counterparts, descending from $k$ down to $3$. These relations follow directly from Theorem~\ref{defne2} together with the construction in~\eqref{pelo}.

On the base level, the lower row encodes the corresponding relations between the $k$-Nambu--Jacobi and $(k-1)$-Nambu--Jacobi structures, descending again from $k$ to $3$, in accordance with Theorem~\ref{beris} and the contraction~\eqref{pp1}.

The vertical arrows represent the application of Theorem~\ref{memre2}, which establishes the correspondence between a locally conformal $k$-Nambu--Poisson structure and its associated $k$-Nambu--Jacobi structure.

Finally, we remark that the rightmost box in the commutative diagram illustrates how Diagram~\ref{geomdiagram--} is embedded within the present geometric framework.

\subsection{Locally Conformal k-Nambu--Poisson Hamiltonian Dynamics}\label{sec:Ham-LCNP}

To analyze the Hamiltonian dynamics on the locally conformal setting, we start with a local $k$-Nambu--Poisson manifold $(U_\alpha,\eta_\alpha^{[k]})$ where  $\eta_\alpha^{[k]}$ satisfies \eqref{usta}. Let $H^i_\alpha$ be local Hamiltonian functions for $i=1,\dotsm,k-1$ that can be glued to global functions $H^i$ by the rule given in \eqref{kalinkamaya}. We recall the Hamiltonian vector field definition given for $k$-Nambu--Poisson manifolds in \eqref{NP-HVF} to obtain the local Hamiltonian vector field on the local $k$-Nambu--Poisson manifold $(U_\alpha,\eta_\alpha^{[k]})$:
\begin{equation}\label{dalinda}
     X_{H^1_\alpha, \dotsm, H^{k-1}_\alpha}= \iota_{dH^1_\alpha\wedge \dotsm \wedge dH^{k-1}_\alpha} \eta^{[k]}_\alpha.
\end{equation}
Accordingly, the local Nambu Hamiltonian vector field is computed to be
\begin{equation}
    \begin{split}
        X_{H^1_\alpha, \ldots, H^{k-1}_\alpha}&= \iota_{dH^1_\alpha\wedge \ldots \wedge dH^{k-1}_\alpha} \eta^{[k]}_\alpha = \iota_{dH^1_\alpha}\ldots \iota_{dH^{k-1}_\alpha}\eta^{[k]}_\alpha \\
     & = \iota_{d(e^{-\sigma_\alpha}H^1\vert_\alpha)}\ldots \iota_{d(e^{-\sigma_\alpha}H^{k-1}\vert_\alpha)} e^{({k-1})\sigma_\alpha}\eta^{[k]}\vert_\alpha \\
     &  = \iota_{(dH^1\vert_\alpha-H^1\vert_\alpha d\sigma_\alpha)}\ldots\iota_{(dH^{k-1}\vert_\alpha-H^{k-1}\vert_\alpha d\sigma_\alpha)}\eta^{[k]}\vert_\alpha \\
     & = \iota_{dH^1\vert_\alpha}\ldots\iota_{dH^{k-1}\vert_\alpha}\eta^{[k]}\vert_\alpha - \sum^{k-1}_{i=1}H^i\vert_\alpha \iota_{dH^1\vert_\alpha}\ldots\iota_{dH^{i-1}\vert_\alpha}\iota_{d\sigma_\alpha}\iota_{dH^{i+1}\vert_\alpha}\ldots\iota_{dH^{k-1}\vert_\alpha}\eta^{[k]}\vert_\alpha \\
      & = \iota_{dH^1\vert_\alpha \ldots dH^{k-1}\vert_\alpha}\eta^{[k]}\vert_\alpha + \sum^{k-1}_{i=1}(-1)^{i+k} H^i\vert_\alpha \iota_{dH^1\vert_\alpha \ldots \widehat{dH^{i}\vert_\alpha} \ldots dH^{k-1}\vert_\alpha}\iota_{d\sigma_\alpha}\eta^{[k]}\vert_\alpha.
    \end{split}
\end{equation}
In the last line of the calculation, we can use the definition of $\mathcal{E}^{[k-1]}$ given in \eqref{newE-k} and write the Hamiltonian vector field in terms of $\mathcal{E}^{[k-1]}$:
\begin{equation}\label{erikdali}
   X_{H^1_\alpha, \ldots, H^{k-1}_\alpha} =  \iota_{dH^1\vert_\alpha \ldots dH^{k-1}\vert_\alpha}\eta^{[k]}\vert_\alpha + \sum^{k-1}_{i=1}(-1)^{i} H^i\vert_\alpha \iota_{dH^1\vert_\alpha \ldots \widehat{dH^{i}\vert_\alpha} \ldots dH^{k-1}\vert_\alpha}\mathcal{E}^{[k-1]}\vert_\alpha.
\end{equation}

All the terms on the right-hand side of \eqref{erikdali} has their global identifications. This forces the vector fields $\{ X_{H^1_\alpha, \ldots, H^{k-1}_\alpha}\}$ generating the local particle motion to be global. 
That is, on any  overlapping region $U_\alpha \cap U_\beta$, we have $ X_{H^1_\alpha, \ldots, H^{k-1}_\alpha} =  X_{H^1_\beta, \ldots, H^{k-1}_\beta}$. Hence, we can define a global vector field $ X_{H^1, \ldots, H^{k-1}}$ satisfying
\begin{equation}
   X_{H^1, \ldots, H^{k-1}}\vert_\alpha = X_{H^1_\alpha, \ldots, H^{k-1}_\alpha}.
\end{equation}
Using this observation, the Hamiltonian vector field we obtained in \eqref{erikdali} can be written in global form:
\begin{equation}
    X_{H^1, \ldots, H^{k-1}} = \iota_{dH^1\vert \ldots dH^{k-1}}\eta^{[k]} + \sum^{k-1}_{i=1}(-1)^{i} H^i \iota_{dH^1 \ldots \widehat{dH^{i}} \ldots dH^{k-1}}\mathcal{E}^{[k-1]}.
\end{equation}
Hence, the dynamics of the observable $F$ is determined by
\begin{equation}\label{erikdaligevrektir}
    X_{H^1, \ldots, H^{k-1}} (F) = \iota_{dH^1\ldots dH^{k-1}}\eta^{[k]}(F) + \sum^{k-1}_{i=1}(-1)^{i} H^i \iota_{dH^1 \ldots \widehat{dH^{i}} \ldots dH^{k-1}}\mathcal{E}^{[k-1]}(F).
\end{equation}
Equivalently, referring to the locally conformal Nambu--Poisson $k$-bracket we obtained in \eqref{malinkamaya} to write \eqref{erikdaligevrektir} as
\begin{equation}
    X_{H^1, \ldots, H^{k-1}} (F)=\{F,H^1, \ldots, H^{k-1}\} - F\mathcal{E}(dH^1 , \ldots, H^{k-1}).
\end{equation}

\section{Locally Conformal Generalized Poisson Setting}\label{sec:LCGP}

In this section, we will focus on another generalization of Poisson and Jacobi manifolds. This time, we will generalize the identities \eqref{Poisson-cond} and \eqref{ident-Jac} (we will give the details later). However, since the multivectors of odd order trivially satisfy the commutation rule under the Schouten--Nijenhuis bracket, we will only work on the $k$-vectors where $k=2p$. Then we will do the local conformality analysis for the manifolds whose local pictures are $2p$-generalized Poisson.

\subsection{Generalized Poisson Manifolds}

Consider a manifold $\mathcal{P}$ equipped with a skew-symmetric (see the definition \eqref{sskew}) $2p$-multilinear mapping
\begin{equation}\label{GP-n}
\{ \bullet,\bullet,\dotsm,\bullet \}^{\rm(GP)}~:C^\infty(\mathcal{P})\times \dots \times C^\infty(\mathcal{P}) \longrightarrow C^\infty(\mathcal{P}).
\end{equation}
The bracket is called a generalized Poisson $2p$-bracket if the following axioms are satisfied:
\begin{axioms} 
  \item[\textbf{GP1.}]  Generalized Jacobi identity
	\begin{equation}  \label{item:GP1}
\circlearrowright  \{F_1,F_2,\dotsm,F_{2p-1},\{F_{2p},\dotsm,F_{4p-1}\} ^{\rm(GP)} \} ^{\rm(GP)} = 0,
\end{equation}
	\item[\textbf{GP2.}]  Leibniz identity
\begin{equation}\label{item:GP2}
\{ H \cdot F_1, F_2, \dotsm ,F_{2p} \}^{\rm(GP)}=
H \cdot \{F_1, F_2, \dotsm , F_{2p} \} ^{\rm(GP)}+
\{ H, F_2, \dotsm, F_{2p} \}^{\rm(GP)} \cdot F_1
\end{equation}
\end{axioms}
 for all 
 $F_1 , F_2 , \dotsm , F_{4p-1}$ in $\mathcal{F}(P)$. Here, $\circlearrowright$ denotes the skew-symmetric cyclic sum.
A manifold admitting such a bracket is called a $2p$-generalized Poisson manifold  \cite{ArcPerelo96,AzcarragaPerel96}. We denote a $2p$-generalized Poisson manifold by two tuple 
\begin{equation}\label{suud}
(\mathcal{P},\{ \bullet,\bullet,\dotsm,\bullet \}^{\rm(GP)}).
\end{equation}

\textbf{Multivector Field Realization.}
In light of the axioms \eqref{item:GP1} and \eqref{item:GP2}, one can define a multivector field 
\begin{equation}
\eta^{[2p]}(dF_1,dF_2,\dots, dF_{2p})= \{F_1, F_2, \dotsm , F_{2p} \} ^{\rm(GP)}
\end{equation}
corresponding to a generalized Poisson bracket. Note that, in this case, the Leibniz identity is automatically satisfied while the generalized Jacobi identity \eqref{item:GP1} asks for the commutation of the $2p$-vector field by itself under the Schouten--Nijenhuis bracket that is 
\begin{equation}
[\eta^{[2p]},\eta^{[2p]}]=0.
\end{equation}
Hence, we can denote a $2p$-generalized Poisson manifold by a two-tuple 
\begin{equation}
(\mathcal{P},\eta^{[2p]}).
\end{equation}
For the case $p=1$, we arrive at a bivector field $\eta^{[2]}$ which is a Poisson bivector field \eqref{bivec-PoissonBra}; hence, one arrives at the classical Poisson manifold.

\textbf{Generalized Poisson Hamiltonian Dynamics.} Consider a $2p$-generalized Poisson manifold as given in \eqref{suud}. Then the generalized Poisson Hamiltonian dynamics determined by the set $H_1$, $H_2$,\dots $H_{2p-1}$ of Hamiltonian functions,  is defined to be  
\begin{equation}
\dot{x} = \{x, H_1, H_2, \dotsm , H_{2p-1} \}^{\rm(GP)}.
\end{equation}
For an observable $F$, the corresponding generalized Poisson Hamiltonian vector field for Hamiltonian functions $H_1$, $H_2$,\dots $H_{2p-1}$, is defined to be
\begin{equation} 
 X_{H_1 ,\dots ,H_{2p-1}}(F)=\{ F , H_1 ,\dotsm,H_{2p-1} \}^{\rm(GP)}.
\end{equation}
In terms of the interior derivative (see Appendix \ref{sec:intder}), we write the generalized Poisson Hamiltonian vector field as
\begin{equation} \label{GP-HVF}
 X_{H_1 ,\dots ,H_{2p-1}}= \iota_{dH_1\wedge \dots \wedge dH_{2p-1}} \eta^{[2p]} = \iota_{dH_1}\dots\iota_{dH_{2p-1}} \eta^{[2p]} .
\end{equation}

\subsection{Generalized Jacobi Manifolds}

Consider a manifold $\mathcal{P}$ and a skew-symmetric (see the definition \eqref{sskew}) $2p$-bracket 
\begin{equation}\label{GJ-n}
\{ \bullet,\bullet,\dotsm,\bullet \}^{\rm(GJ)}~:C^\infty(\mathcal{P})\times \dots \times C^\infty(\mathcal{P}) \longrightarrow C^\infty(\mathcal{P}) 
\end{equation}
on the space $C^\infty(\mathcal{P})$ of smooth functions on $\mathcal{P}$. 
The bracket is called a generalized Jacobi $2p$-bracket if the following axioms are satisfied for all $F_1 , F_2 , \dotsm , F_{4p-1}$ in $C^\infty(\mathcal{P})$,
\begin{axioms} 
  \item[\textbf{GJ1.}]   Generalized Jacobi identity
	\begin{equation}
\label{item:GJ1}\circlearrowright \{F_1,F_2,\dotsm,F_{2p-1},\{F_{2p},\dotsm,F_{4p-1}\} ^{\rm(GJ)} \} ^{\rm(GJ)} = 0.
\end{equation}
	\item[\textbf{GJ2.}]  First-order linear differential operator 
\begin{equation}\label{item:GJ2}
	\begin{split}
\{ H \cdot F_1, F_2, \dotsm ,F_{2p} \}^{\rm(GJ)}&=
H \cdot \{F_1, F_2, \dotsm , F_{2p} \} ^{\rm(GJ)}+
\{ H F_2, \dotsm, F_{2p} \} ^{\rm(GJ)}F_1 \\& \qquad -H \cdot F_1\{1, F_2, \dotsm, F_{2p} \}^{\rm(GJ)} .
 \end{split}
\end{equation}
\end{axioms}
Here, $\circlearrowright$ denotes the skew-symmetric cyclic sum. A manifold admitting such a bracket is called a $2p$-generalized Jacobi manifold \cite{IbanezLeonPadron98}. We denote  a $2p$-generalized Jacobi manifold  by two-tuple
\begin{equation}\label{saab}
    (\mathcal{P},\{ \bullet,\bullet,\dotsm,\bullet \}^{\rm(GJ)}). 
\end{equation}

Notice that, for $p=1$, the generalized Jacobi $2p$-bracket reduces to a Jacobi bracket \eqref{JacobiBracket}. Hence, for $p=1$ we have the classical Jacobi manifold setting given in Subsection \ref{sec:JacMan}. 

\textbf{Multivector Field Realization.} Consider a $2p$-generalized Jacobi manifold $\mathcal{P}$ as given in \eqref{saab}. Referring to this structure, we define a pair of multivector fields, namely a $(2p-1)$-vector field $\mathcal{E}^{[2p-1]}$ and a $2p$-vector field $\eta^{[2p]}$, as follows 
\begin{equation}\label{tuzla}
\begin{split}
\mathcal{E}^{[2p-1]}(dF_2,\dotsm,dF_{2p})&:=\{1,F_2,F_3,\dotsm,F_{2p}\}^{\rm(GJ)}
\\
\eta^{[2p]} (dF1,\dotsm,dF_{2p})&:=\{ F_1,\dotsm,F_{2p} \}^{\rm(GJ)} + \sum_{i=1}^{2p} (-1)^i F_i~ \mathcal{E}^{[2p-1]} (dF_1,\dotsm,\widehat{dF_i},\dotsm,dF_{2p})
\end{split}
\end{equation}
where $\widehat{dF_i}$ denotes that it is omitting. 

Referring to a pair of multivector fields $(\eta^{[2p]},\mathcal{E}^{[2p-1]})$, we can use the second line in \eqref{tuzla} to a define a $2p$-bracket:
\begin{equation}\label{genJac-brac}
    \{ F_1,\dotsm,F_{2p} \}:=\eta^{[2p]} (dF1,\dotsm,dF_{2p}) + \sum_{i=1}^{2p} (-1)^{i+1} F_i~ \mathcal{E}^{[2p-1]} (dF_1,\dotsm,\widehat{dF_i},\dotsm,dF_{2p}).
\end{equation} 
This bracket satisfies the first order differential operator condition in \eqref{item:GJ2} automatically. On the other hand, the generalized Jacobi identity \eqref{item:GJ1}  is satisfied if and only if the following identities hold \cite{Perez97}:
\begin{equation}\label{ident-genJac}
[\eta^{[2p]},\eta^{[2p]}]=-2(2p-1) \mathcal{E}^{[2p-1]}\wedge \eta^{[2p]},\qquad [\eta^{[2p]},\mathcal{E}^{[2p-1]}]=0.
\end{equation}
This equivalence motivates us to give an alternative notation for $2p$-generalized Jacobi manifolds in terms of multivector fields: 
\begin{equation}
(\mathcal{P},\eta^{[2p]},\mathcal{E}^{[2p-1]}). 
\end{equation}
It is evident that, when $p=1$,  the conditions in \eqref{ident-genJac} reduce to the conditions \eqref{ident-Jac}, those for Jacobi manifolds.

For a $2p$-generalized Jacobi manifold, if  $\mathcal{E}^{[2p-1]}$ is identically zero then we arrive at a $2p$-generalized Poisson manifold. On the other extreme case, if $\eta^{[2p]}$ is identically zero for a $2p$-generalized Jacobi manifold then $\mathcal{E}^{[2p-1]}$ determines a $(2p-1)$-generalized Poisson structure.

\textbf{Generalized Jacobi Hamiltonian Vector Field.}
Assume a $2p$-generalized Jacobi manifold $(\mathcal{P}, \eta^{[2p]}, \mathcal{E}^{[2p-1]})$. We define the Hamiltonian dynamics generated by the collection $(H_1$, $H_2$ $\dots$, $H_{2p-1})$ of Hamiltonian functions as \cite{IbanezLopezMarreroPadron2001}:
\begin{equation}\label{serk}
\begin{split}
X_{H_{1} \cdots H_{2p-1}}:=\,& X_{H_{1} \cdots H_{2p-1}}^{\eta}+\sum_{i=1}^{2p-1}(-1)^{i} H_{i}\, X_{H_{1} \dotsm \widehat{H_{i}} \dotsm H_{2p-1}}^{\mathcal{E}} 
\\=\,& \iota_{dH_{1}\wedge \dots \wedge dH_{2p-1}}\eta^{[2p]} +\sum_{i=1}^{2p-1}(-1)^{i} H_{i}\,  \iota_{dH_{1} \dotsm \widehat{dH_{i}} \dotsm dH_{2p-1}}\mathcal{E}^{[2p-1]}.
\end{split}
\end{equation}
When $\mathcal{E}^{[2p-1]}$ is zero, we have the Hamiltonian vector field  \eqref{GP-HVF} for the $2p$-generalized Poisson manifold setting. On the other hand, for $p=1$, the vector field definition in \eqref{serk} turns out to be the Hamiltonian vector field definition \eqref{HamVF-Jac} for the Jacobi manifold case.

\subsection{Locally Conformally Generalized Poisson Manifold}\label{subsec:LCGP}

In this subsection, we will extend our analysis of local conformality to generalized Poisson manifolds. To achieve this, we will use a similar method with the one we use in Subsection \ref{sec:LCNP}. That is, we will glue the local generalized Poisson charts according to their locally conformal relations.

Consider a manifold $\mathcal{P}$ with an open covering 
\begin{equation}
    \mathcal{P} = \bigsqcup_\alpha U_\alpha
\end{equation}
so that each local chart admits local $2p$-generalized Poisson structures: 
\begin{equation}\label{cicii}
(U_\alpha,\eta^{[2p]}_\alpha),\quad (U_\beta,\eta^{[2p]}_\beta), \quad (U_\gamma,\eta^{[2p]}_\gamma), \quad \dots.
\end{equation} 
Defining the open covering like this does not guarantee the equality of the local $2p$-generalized Poisson structures on the intersections. That is, the family $ \{\eta^{[2p]}_\alpha\} $ of local generalized Poisson structures cannot be glued directly. To deal with this problem, we assume the existence of a family of local functions defined on each open subset:
\begin{equation}\label{func-rel}
\sigma_\alpha: U_\alpha \to \mathbb{R}, \quad 
\sigma_\beta: U_\beta \to \mathbb{R}, \quad 
\sigma_\gamma: U_\gamma \to \mathbb{R}, \quad \dots ,
\end{equation}
whose exterior derivatives agree on the overlaps. That is, for any non-trivial intersection $U_\alpha \cap U_\beta \neq \emptyset$, we have
\begin{equation}\label{dsigma1}
d\sigma_\alpha = d\sigma_\beta.
\end{equation}
The Poincar\'{e} lemma allows us to define a Lee form $\theta$ so that its local picture on $U_\alpha$ is denoted as 
\begin{equation} \label{titaaa}
    \theta\vert_\alpha = d\sigma_\alpha.
\end{equation}

We will use the set $\{\sigma_\alpha\}$ of conformal functions defined in \eqref{func-rel} as follows. For any two subsets satisfying $ U_\alpha \cap U_\beta \neq \emptyset $, we suppose that the corresponding local generalized Poisson structures satisfy the relation
\begin{equation}\label{ustaa}
    e^{-(2p-1)\sigma_\alpha}\,\eta^{[2p]}_\alpha \;=\; e^{-(2p-1)\sigma_\beta}\,\eta^{[2p]}_\beta .
\end{equation}
These conformal equalities in \eqref{ustaa} determine the transition scalars
\begin{equation} \label{transition2}
\rho_{\beta \alpha} = \frac{e^{(2p-1)\sigma_\alpha}}{e^{(2p-1)\sigma_\beta}} = e^{-(2p-1)(\sigma_\beta - \sigma_\alpha)},
\end{equation}
satisfying the cocycle condition
\begin{equation}\label{cocycle2}
\rho_{\beta \alpha} \, \rho_{\alpha \gamma} = \rho_{\beta \gamma}.
\end{equation}
Accordingly,  the local generalized Poisson structures $ \{\eta^{[2p]}_\alpha\} $ can be glued up to a line bundle valued $2p$-vector field.

In analogy with the Poisson and Nambu--Poisson cases, we refer to the construction above---namely, the collection $\{(U_\alpha,\eta_\alpha^{[2p]})\}$
of local $2p$-generalized Poisson charts together with the set of conformal functions 
$\{\sigma_\alpha\}$ satisfying the compatibility condition \eqref{ustaa}---as a  locally conformally $2p$-generalized Poisson manifold. We denote a locally conformally $2p$-generalized Poisson manifold by 
\begin{equation}
    (\mathcal{P},U_\alpha,\eta_\alpha^{[2p]},\sigma_\alpha).
\end{equation}

Even though it may not possible to glue the local generalized Poisson structures in \eqref{cicii} to a real-valued global object, we can define the $2p$-vector fields
\begin{equation}\label{gs1}
\eta^{[2p]}|_\alpha := e^{-(2p-1)\sigma_\alpha}\,\eta^{[2p]}_\alpha
\end{equation}
which satisfy the compatibility condition 
\begin{equation}\label{bibii}
\eta^{[2p]}|_\alpha = \eta^{[2p]}|_\beta 
\end{equation}
as a result of the relation \eqref{ustaa}. Thus one can define a globally well-defined $2p$-vector field $ \eta^{[2p]} $, whose local expression on any local chart $ U_\alpha $ is $ \eta^{[2p]}|_\alpha $. We note that this global $2p$-vector field is not necessarily a generalized Poisson $2p$-vector field. Instead, it determines a $2p$-generalized Jacobi manifold structure as we shall examine in the sequel. 

Let us analyze the structure on an arbitrary local chart $U_\alpha$. Assume that we are given local functions $F^i_\alpha$, chosen compatibly with the locally conformal structure, which glue to global functions $F^i\in C^\infty(\mathcal{P})$
according to
 \begin{equation}\label{glue-func-n}
    F^i\vert_\alpha = e^{\sigma_\alpha} F^i_\alpha ,\qquad i=1,\dotsm,2p.
 \end{equation}
We compute the local generalized Poisson $2p$-bracket $\{\bullet,\dotsm,\bullet,\bullet\}_\alpha$ determined by the  local generalized Poisson $2p$-vector field $\eta^{[2p]}_\alpha$ in terms $\eta^{[2p]}\vert_\alpha$ in \eqref{gs1} and $F^i\vert_\alpha$ in \eqref{glue-func-n} as 
 \begin{equation}\label{glue-n-genPois-brac}
	\begin{split}
		& \{F^1_\alpha,\dotsm,F^{2p}_\alpha\}_\alpha = \eta_\alpha(dF^1_\alpha,\dotsm,dF^{2p}\vert_\alpha) \\
		&\qquad= e^{(2p-1)\sigma_\alpha}\eta\vert_\alpha(d(e^{-\sigma_\alpha}F^1\vert_\alpha),\dotsm,d(e^{-\sigma_\alpha}F^{2p}\vert_\alpha)) \\
		&\qquad = e^{-\sigma_\alpha}\eta\vert_\alpha(dF^1\vert_\alpha - F^1\vert_\alpha d\sigma_\alpha,\dotsm,dF^{2p}\vert_\alpha - F^{2p}\vert_\alpha d\sigma_\alpha).
	\end{split}
\end{equation}
We multiply both sides of \eqref{glue-n-genPois-brac} by $e^{\sigma_\alpha}$ to obtain an equation where each term has its own global realization:
\begin{equation}\label{gluedGP}
	\begin{split}
		e^{\sigma_\alpha}\{F^1_\alpha,\dotsm,F^{2p}_\alpha\}_\alpha &= \eta\vert_\alpha(dF^1\vert_\alpha,\dotsm,dF^{2p}\vert_\alpha) - F^1\vert_\alpha\eta\vert_\alpha(d\sigma_\alpha,dF^2\vert_\alpha,\dotsm,dF^{2p}\vert_\alpha) \\
        &\qquad - F^{2}\vert_\alpha\eta\vert_\alpha(dF^1\vert_\alpha,d\sigma_\alpha,dF^3\vert_\alpha,\dotsm,dF^{2p-1}\vert_\alpha)\\
        &\qquad - F^{3}\vert_\alpha\eta\vert_\alpha(dF^1\vert_\alpha,dF^2\vert_\alpha,d\sigma_\alpha,dF^4\vert_\alpha,\dotsm,dF^{2p-1}\vert_\alpha)\\
		&\qquad - \dotsm \\
        &\qquad - F^{2p}\vert_\alpha\eta\vert_\alpha(dF^1\vert_\alpha,\dotsm,dF^{2p-1}\vert_\alpha,d\sigma_\alpha) \\
        & = \eta\vert_\alpha(dF^1\vert_\alpha,\dotsm,dF^{2p}\vert_\alpha) + F^1\vert_\alpha\eta\vert_\alpha(dF^2\vert_\alpha,\dotsm,dF^{2p}\vert_\alpha,d\sigma_\alpha) \\
        & \qquad - F^2\vert_\alpha\eta\vert_\alpha(dF^1\vert_\alpha,dF^3\vert_\alpha,\dotsm,dF^{2p}\vert_\alpha,d\sigma_\alpha) \\
        & \qquad + F^3\vert_\alpha\eta\vert_\alpha(dF^1\vert_\alpha,dF^2\vert_\alpha,dF^4\vert_\alpha,\dotsm,dF^{2p}\vert_\alpha,d\sigma_\alpha) \\
        & \qquad - \dotsm \\
        & \qquad + F^{2p}\vert_\alpha\eta\vert_\alpha(dF^1\vert_\alpha,\dotsm,dF^{2p-1}\vert_\alpha,d\sigma_\alpha).
	\end{split}
\end{equation}
Using the Lee form $\theta$ defined in \eqref{titaaa}, this equation turns out to be
\begin{equation}\label{gluedGP1}
    \begin{split}
         e^{\sigma_\alpha}\{F^1_\alpha,\dotsm,F^{2p}_\alpha\}_\alpha &= \eta\vert_\alpha(dF^1\vert_\alpha,\dotsm,dF^{2p}\vert_\alpha) + F^1\vert_\alpha\iota_{\theta\vert_\alpha}\eta\vert_\alpha(dF^2\vert_\alpha,\dotsm,dF^{2p}\vert_\alpha) \\
        & \qquad - F^2\vert_\alpha\iota_{\theta\vert_\alpha}\eta\vert_\alpha(dF^1\vert_\alpha,dF^3\vert_\alpha,\dotsm,dF^{2p}\vert_\alpha) \\
        & \qquad + F^3\vert_\alpha\iota_{\theta\vert_\alpha}\eta\vert_\alpha(dF^1\vert_\alpha,dF^2\vert_\alpha,dF^4\vert_\alpha,\dotsm,dF^{2p}\vert_\alpha) \\
        & \qquad - \dotsm \\
        & \qquad + F^{2p}\vert_\alpha\iota_{\theta\vert_\alpha}\eta\vert_\alpha(dF^1\vert_\alpha,\dotsm,dF^{2p-1}\vert_\alpha).
    \end{split}
\end{equation}
See that both sides of equation \eqref{gluedGP1} can be glued to their corresponding global expressions. More explicitly, for the left-hand side, \eqref{glue-func-n} allows us to write a global $2p$-bracket whose restriction to the local chart is
\begin{equation}\label{sennur1}
    \{F^1,\dotsm,F^{2p}\}\vert_\alpha =  e^{\sigma_\alpha}
    \{F^1_\alpha,\dotsm,F^{2p}_\alpha\}_\alpha.
 \end{equation}
 Furthermore, for the right-hand side, we define a $(2p-1)$-vector field
\begin{equation}\label{newE-n}
    \mathcal{E}^{[2p-1]}\vert_\alpha = \iota_{d\sigma_\alpha}\eta^{[2p]}\vert_\alpha = \iota_{\theta\vert_\alpha}\eta^{[2p]}\vert_\alpha.
\end{equation}
That gives the locally conformality condition as  
\begin{equation}\label{newE-n++}
\mathcal{E}^{[2p-1]}= \iota_{\theta}\eta^{[2p]}.
\end{equation}
Hence, we can write the global expression of each term in the right-hand side of the calculation \eqref{gluedGP1} as well. As a result, referring to the $2p$-vector field $\eta ^{[2p]}$ in \eqref{gs1} and the bivector field $\mathcal{E}^{[2]}$ in \eqref{newE-n++}, we determine a global $2p$-bracket of the global functions $F^1,\dotsm,F^{2p}$ on $\mathcal{P}$ as
\begin{equation}\label{LCGP}
 \begin{split}
\{ F^1,\dotsm,F^{2p} \} &= \eta ^{[2p]}(dF^1,\dotsm,dF^{2p}) + \sum_{i=1}^{2p}(-1)^{i-1} F^i\mathcal{E}^{[2p-1]}(dF^1,\dotsm,\widehat{dF^i},\dotsm,dF^{2p}) .    
 \end{split}
\end{equation}
We refer to this $2p$-bracket as a locally conformal generalized Poisson $2p$-bracket. Observe that the $2p$-bracket \eqref{LCGP} is generated by the pair $(\eta ^{[2p]},\mathcal{E}^{[2p-1]})$. Accordingly, it coincides with the generalized Jacobi $2p$-bracket \eqref{genJac-brac}. In the following theorem, we see that every locally conformally $2p$-generalized Poisson manifold is, in fact, a $2p$-generalized Jacobi manifold.

\begin{theorem} \label{prop-LCGP-Jacobi} A locally conformally $2p$-generalized Poisson manifold is a $2p$-generalized Jacobi manifold. More precisely, the pair $(\eta^{[2p]},\mathcal{E}^{[2p-1]})$, where $\eta^{[2p]}$ is the $2p$-vector field in \eqref{gs1} and $\mathcal{E}^{[2p-1]}$ is the $(2p-1)$-vector field in \eqref{newE-n}, is a $2p$-generalized Jacobi pair.  
\end{theorem}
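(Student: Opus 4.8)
The plan is to adapt, essentially verbatim, the argument that established Theorem~\ref{prop-LCP-Jacobi} in the classical Poisson case, with the degree bookkeeping adjusted from $2$ to $2p$. Concretely, by the alternative characterization of $2p$-generalized Jacobi pairs (and since the bracket~\eqref{LCGP} coincides with~\eqref{genJac-brac}, which automatically satisfies the first-order differential operator condition~\eqref{item:GJ2}), it suffices to verify the two identities in~\eqref{ident-genJac}, namely
\begin{equation*}
[\eta^{[2p]},\eta^{[2p]}] = -2(2p-1)\,\mathcal{E}^{[2p-1]}\wedge\eta^{[2p]},
\qquad
[\eta^{[2p]},\mathcal{E}^{[2p-1]}] = 0 ,
\end{equation*}
where $\eta^{[2p]}$ is the glued $2p$-vector field~\eqref{gs1} and $\mathcal{E}^{[2p-1]}=\iota_\theta\eta^{[2p]}$ as in~\eqref{newE-n}. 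The whole computation is carried out on a chart $U_\alpha$ and then globalized using~\eqref{bibii},~\eqref{dsigma1} and~\eqref{newE-n}, exactly as in the Poisson proof.

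For the first identity, I would start from the local generalized Poisson condition $[\eta^{[2p]}_\alpha,\eta^{[2p]}_\alpha]=0$, substitute $\eta^{[2p]}_\alpha=e^{(2p-1)\sigma_\alpha}\,\eta^{[2p]}|_\alpha$, and expand using the Leibniz rule for the Schouten--Nijenhuis bracket with respect to scalar multiplication (Appendix~\ref{sec:SN}), mirroring the displayed chain of equalities inside the proof of Theorem~\ref{prop-LCP-Jacobi}. The only new feature is that differentiating $e^{(2p-1)\sigma_\alpha}$ yields the factor $(2p-1)$, so the two cross terms combine to $2(2p-1)e^{2(2p-1)\sigma_\alpha}\,\iota_{d\sigma_\alpha}\eta^{[2p]}|_\alpha\wedge\eta^{[2p]}|_\alpha$. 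Cancelling the nowhere-vanishing conformal factor gives $[\eta^{[2p]}|_\alpha,\eta^{[2p]}|_\alpha]=-2(2p-1)\,\iota_{d\sigma_\alpha}\eta^{[2p]}|_\alpha\wedge\eta^{[2p]}|_\alpha$, and since every tensor here is chart-independent, this globalizes to the first identity with $\iota_{d\sigma_\alpha}\eta^{[2p]}|_\alpha$ replaced by $\iota_\theta\eta^{[2p]}=\mathcal{E}^{[2p-1]}$.

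For the second identity, I would reproduce the computation~\eqref{Cals-1}. Writing $[\eta^{[2p]},\mathcal{E}^{[2p-1]}]=[\eta^{[2p]},\iota_\theta\eta^{[2p]}]$ and invoking that $\theta$ is closed, the distribution property of the right contraction $\iota_\theta$ on the Schouten--Nijenhuis bracket (Appendix~\ref{sec:SN}) reduces this, up to a universal constant, to $\iota_\theta[\eta^{[2p]},\eta^{[2p]}]$. Substituting the first identity and distributing $\iota_\theta$ over the wedge product leaves the two terms $\iota_\theta\iota_\theta\eta^{[2p]}\wedge\eta^{[2p]}$ and $\iota_\theta\eta^{[2p]}\wedge\iota_\theta\eta^{[2p]}$. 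The first vanishes since contracting twice with the same one-form is zero by skew-symmetry; the second vanishes because $\mathcal{E}^{[2p-1]}=\iota_\theta\eta^{[2p]}$ has odd degree $2p-1$, so it wedges to zero with itself. Hence $[\eta^{[2p]},\mathcal{E}^{[2p-1]}]=0$, and by the equivalence recorded in~\eqref{ident-genJac} (see~\cite{Perez97}) the pair $(\eta^{[2p]},\mathcal{E}^{[2p-1]})$ is a $2p$-generalized Jacobi pair.

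The routine but delicate part will be the sign and coefficient bookkeeping: the precise forms of the Leibniz rule and of the contraction identities for the Schouten--Nijenhuis bracket depend on the degree conventions fixed in Appendix~\ref{sec:SN}, and one must check that these produce the coefficient $2p-1$ (rather than $p$ or $2p$) in the first identity and that the two terms in the second step genuinely carry opposite signs so that they cancel. Once these conventions are pinned down, no idea beyond the classical locally conformal Poisson case (Theorem~\ref{prop-LCP-Jacobi}) is required; the proof is its degree-$2p$ transcription.
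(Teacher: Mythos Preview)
Your proposal is correct and follows essentially the same approach as the paper's own proof: expand $[\eta^{[2p]}_\alpha,\eta^{[2p]}_\alpha]=0$ via the Leibniz rule for the Schouten--Nijenhuis bracket to obtain the first identity with the factor $2(2p-1)$, then derive the second identity by writing $[\mathcal{E}^{[2p-1]},\eta^{[2p]}]=-\tfrac{1}{2}\iota_\theta[\eta^{[2p]},\eta^{[2p]}]$, substituting the first identity, and observing that both resulting terms vanish by skew-symmetry. The paper's computation \eqref{gener-calc-2p}--\eqref{Cals-2} is exactly the degree-$2p$ transcription of \eqref{Cals-1} that you describe.
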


\begin{proof}
To see that $(\eta^{[2p]},\mathcal{E}^{[2p-1]})$ is a $2p$-generalized Jacobi pair, we need to show that the identities in \eqref{ident-genJac} are satisfied. We first consider the local generalized Poisson $2p$-vector field
$\eta^{[2p]}_\alpha$. Since $\eta^{[2p]}_\alpha$ defines a $2p$-generalized
Poisson structure, its Schouten--Nijenhuis bracket vanishes, that is, $[\eta^{[2p]}_\alpha,\eta^{[2p]}_\alpha]=0$. With this identity in mind, we proceed to the following calculation
\begin{equation}\label{gener-calc-2p}
	\begin{split}
	[\eta^{[2p]}_\alpha,\eta^{[2p]}_\alpha] & = [e^{{(2p-1)}\sigma_\alpha}\eta^{[2p]}\vert_\alpha,e^{{(2p-1)}\sigma_\alpha}\eta^{[2p]}\vert_\alpha] 
	\\
    & = e^{{(2p-1)}\sigma_\alpha}[e^{{(2p-1)}\sigma_\alpha}\eta^{[2p]}\vert_\alpha,\eta^{[2p]}\vert_\alpha] + \iota_{d(e^{{(2p-1)}\sigma_\alpha})}(e^{{(2p-1)}\sigma_\alpha}\eta^{[2p]}\vert_\alpha) \wedge \eta\vert_\alpha \\
    & = e^{{(2p-1)}\sigma_\alpha}[e^{{(2p-1)}\sigma_\alpha}\eta^{[2p]}\vert_\alpha,\eta^{[2p]}\vert_\alpha] + {(2p-1)}e^{2{(2p-1)}\sigma_\alpha}\iota_{d\sigma_\alpha}\eta^{[2p]}\vert_\alpha \wedge \eta^{[2p]}\vert_\alpha \\
    & = e^{{(2p-1)}\sigma_\alpha}( e^{{(2p-1)}\sigma_\alpha}[\eta^{[2p]}\vert_\alpha,\eta^{[2p]}\vert_\alpha] + \iota_{d(e^{{(2p-1)}\sigma_\alpha})}\eta^{[2p]}\vert_\alpha \wedge \eta^{[2p]}\vert_\alpha) \\
    & \qquad + {(2p-1)}e^{2{(2p-1)}\sigma_\alpha}\iota_{d\sigma_\alpha}\eta^{[2p]}\vert_\alpha \wedge \eta^{[2p]}\vert_\alpha \\
    & = e^{2{(2p-1)}\sigma_\alpha}[\eta^{[2p]}\vert_\alpha,\eta^{[2p]}\vert_\alpha] + 2{(2p-1)} e^{2{(2p-1)}\sigma_\alpha}\iota_{d\sigma_\alpha}\eta^{[2p]}\vert_\alpha \wedge \eta^{[2p]}\vert_\alpha \\
    & = e^{2{(2p-1)}\sigma_\alpha}[\eta^{[2p]}\vert_\alpha,\eta^{[2p]}\vert_\alpha] + 2{(2p-1)} e^{2{(2p-1)}\sigma_\alpha}\mathcal{E}^{[2p-1]}\vert_\alpha \wedge \eta^{[2p]}\vert_\alpha,
	\end{split}
\end{equation}
Since the left-hand side is zero, we obtain
\begin{equation}\label{localoca}
[\eta^{[2p]}\big|_\alpha,\eta^{[2p]}\big|_\alpha]
= -2(2p-1)\,
\mathcal{E}^{[2p-1]}\big|_\alpha \wedge \eta^{[2p]}\big|_\alpha,
\end{equation}
The global expressions of each term in \eqref{localoca} give the following global equality
\begin{equation}\label{beril}
	[\eta^{[2p]},\eta^{[2p]}] = -2(2p-1)\mathcal{E}^{[2p-1]} \wedge \eta^{[2p]},
\end{equation}
which is exactly the same as the first identity in \eqref{ident-genJac}.
The second identity in \eqref{ident-genJac} follows from the distributive properties of interior derivative on Schouten--Nijenhuis bracket and on wedge products: 
 \begin{equation}\label{Cals-2}
    \begin{split}
             [\mathcal{E}^{[2p-1]},\eta^{[2p]}] & =  [\iota_\theta\eta^{[2p]},\eta^{[2p]}] = -\frac{1}{2}\iota_\theta[\eta^{[2p]},\eta^{[2p]}] = -\frac{1}{2}\iota_\theta\big(-2(2p-1)\mathcal{E}^{[2p-1]} \wedge \eta^{[2p]}\big)  \\
     & = (2p-1)\iota_\theta(\iota_\theta\eta^{[2p]} \wedge \eta^{[2p]}) = (2p-1)\big(\iota_\theta\iota_\theta\eta^{[2p]} \wedge \eta^{[2p]} + \iota_\theta\eta^{[2p]} \wedge \iota_\theta\eta^{[2p]}\big) \\
     & = 0.
    \end{split}
	\end{equation}
The details of the calculation are as follows: In the first equality we wrote the global expression \eqref{newE-n++} of the $(2p-1)$-vector field $\mathcal{E}^{[2p-1]}$. In the second equality, we applied the distribution property 
    \eqref{iota-SN-m-n} of the right contraction on the Schouten--Nijenhuis bracket. For the third equality, we employed \eqref{beril}, While in the fifth equality, we used the distribution property 
    \eqref{iota-SN-2-2-X-} of the right contraction on the wedge product.
In the last equation, both terms vanish identically due to the skew-symmetry. Hence, we conclude that $(\mathcal{P},\eta^{[2p]},\mathcal{E}^{[2p-1]})$ is indeed a $2p$-generalized Jacobi manifold.

\end{proof}

\textbf{Locally Conformal Generalized Poisson Hamiltonian Dynamics.}
We once more start with a local $2p$-generalized Poisson manifold $(U_\alpha,\eta_\alpha^{[2p]})$, where $\eta_\alpha^{[2p]}$ is a local generalized Poisson $2p$-vector field satisfying \eqref{gs1}. We further consider that $H_\alpha^i$ for $i=1,\ldots,2p-1$ are local Hamiltonian functions satisfying the global identification in \eqref{glue-func-n}. Applying the definition of the generalized Poisson Hamiltonian vector field
\eqref{GP-HVF}, the local generalized Poisson Hamiltonian vector field on
$(U_\alpha,\eta^{[2p]}_\alpha)$ is written as
\begin{equation}\label{dalinda2}
X_{H^1_\alpha,\dots,H^{2p-1}_\alpha}
= \iota_{dH^1_\alpha \wedge \dots \wedge dH^{2p-1}_\alpha}\,
\eta^{[2p]}_\alpha .
\end{equation}
To understand its global behavior, we first compute
\begin{equation}
    \begin{split}
        X_{H^1_\alpha, \ldots, H^{2p-1}_\alpha}&= \iota_{dH^1_\alpha\wedge \ldots \wedge dH^{2p-1}_\alpha} \eta^{[2p]}_\alpha = \iota_{dH^1_\alpha}\ldots \iota_{dH^{2p-1}_\alpha}\eta^{[2p]}_\alpha \\
     & = \iota_{d(e^{-\sigma_\alpha}H^1\vert_\alpha)}\ldots \iota_{d(e^{-\sigma_\alpha}H^{2p-1}\vert_\alpha)} e^{({2p-1})\sigma_\alpha}\eta^{[2p]}\vert_\alpha \\
     &  = \iota_{(dH^1\vert_\alpha-H^1\vert_\alpha d\sigma_\alpha)}\ldots\iota_{(dH^{2p-1}\vert_\alpha-H^{2p-1}\vert_\alpha d\sigma_\alpha)}\eta^{[2p]}\vert_\alpha \\
     & = \iota_{dH^1\vert_\alpha}\ldots\iota_{dH^{2p-1}\vert_\alpha}\eta^{[2p]}\vert_\alpha \\
     & \qquad - \sum^{2p-1}_{i=1}H^i\vert_\alpha \iota_{dH^1\vert_\alpha}\ldots\iota_{dH^{i-1}\vert_\alpha}\iota_{d\sigma_\alpha}\iota_{dH^{i+1}\vert_\alpha}\ldots\iota_{dH^{2p-1}\vert_\alpha}\eta^{[2p]}\vert_\alpha \\
      & = \iota_{dH^1\vert_\alpha \ldots dH^{2p-1}\vert_\alpha}\eta^{[2p]}\vert_\alpha + \sum^{2p-1}_{i=1}(-1)^{i} H^i\vert_\alpha \iota_{dH^1\vert_\alpha \ldots \widehat{dH^{i}\vert_\alpha} \ldots dH^{2p-1}\vert_\alpha}\iota_{d\sigma_\alpha}\eta^{[2p]}\vert_\alpha.
    \end{split}
\end{equation}
Notice that the definition of $\mathcal{E}^{[2p-1]}$ given in \eqref{newE-n} allows us write the Hamiltonian vector field in terms of $\mathcal{E}^{[2p-1]}$:
\begin{equation}\label{erikdali2}
   X_{H^1_\alpha, \ldots, H^{2p-1}_\alpha} =  \iota_{dH^1\vert_\alpha \ldots dH^{2p-1}\vert_\alpha}\eta^{[2p]}\vert_\alpha + \sum^{2p-1}_{i=1}(-1)^{i} H^i\vert_\alpha \iota_{dH^1\vert_\alpha \ldots \widehat{dH^{i}\vert_\alpha} \ldots dH^{2p-1}\vert_\alpha}\mathcal{E}^{[2p-1]}\vert_\alpha.
\end{equation}
We can glue the right-hand side of \eqref{erikdali2} to a global expression. It follows that the vector fields $ X_{H^1_\alpha, \ldots, H^{2p-1}_\alpha}$ need to coincide on the overlappings, that is, we have $X_{H^1_\alpha, \ldots, H^{2p-1}_\alpha} = X_{H^1_\beta, \ldots, H^{2p-1}_\beta}$ on $U_\alpha \cap U_\beta$. Then, a global vector field $X_{H^1, \ldots, H^{2p-1}}$ can be defined by
\begin{equation}
   X_{H^1, \ldots, H^{2p-1}}\vert_\alpha = X_{H^1_\alpha, \ldots, H^{2p-1}_\alpha},
\end{equation}
which gives the global expression
\begin{equation}
    X_{H^1, \ldots, H^{2p-1}} = \iota_{dH^1\vert \ldots dH^{2p-1}}\eta^{[2p]} + \sum^{2p-1}_{i=1}(-1)^{i} H^i \iota_{dH^1 \ldots \widehat{dH^{i}} \ldots dH^{2p-1}}\mathcal{E}^{[2p-1]}.
\end{equation}
Hence, the dynamics of the observable $F$ can be written as
\begin{equation}\label{erikdaligevrektir2}
    X_{H^1, \ldots, H^{2p-1}} (F) = \iota_{dH^1\ldots dH^{2p-1}}\eta^{[2p]}(F) + \sum^{2p-1}_{i=1}(-1)^{i} H^i \iota_{dH^1 \ldots \widehat{dH^{i}} \ldots dH^{2p-1}}\mathcal{E}^{[2p-1]}(F).
\end{equation}
Moreover, using the definition \eqref{LCGP} of the locally conformal generalized Poisson $2p$-bracket, we can rewrite \eqref{erikdaligevrektir2} as
\begin{equation}
    X_{H^1, \ldots, H^{2p-1}} (F)=\{F,H^1, \ldots, H^{2p-1}\} - F\mathcal{E}(dH^1 , \ldots, H^{2p-1}).
\end{equation}

\subsection{From Locally Conformal $3$-Nambu--Poisson to Locally Conformal $4$-Generalized Poisson Structures}\label{sec:4-3}

Assume a locally conformally $3$-Nambu--Poisson manifold $(\mathcal{P},\eta^{[3]},\mathcal{E}^{[2]})$ as given in \eqref{LC-3-NP}. 
Recall that the locally conformal character, represented by the presence of the Lee form $\theta$, is encoded in the relation
\begin{equation}\label{cemile}
  \mathcal{E}^{[2]}=-\iota_\theta \eta^{[3]}.
\end{equation}
Let $H^1$ and $H^2$ be two Hamiltonian functions.
Following the construction outlined in Subsection~\ref{sec:bi-jac}, we obtain
a pair of compatible Jacobi structures given by
\begin{equation}\label{coskun}
\begin{split}
\Lambda^1 &= -\iota_{dH^1}\eta^{[3]} - H^1\mathcal{E}^{[2]},
\qquad
Z^1 = -\iota_{dH^1}\mathcal{E}^{[2]},\\
\Lambda^2 &= \iota_{dH^2}\eta^{[3]} + H^2\mathcal{E}^{[2]},
\qquad
Z^2 = \iota_{dH^2}\mathcal{E}^{[2]}.
\end{split}
\end{equation}
Based on these data, we construct a locally conformal $4$-generalized Poisson
structure on $\mathcal{P}$.
Following \cite{IbanezLeonPadron98}, we define a $4$-vector field $\eta^{[4]}$
and a $3$-vector field $\mathcal{E}^{[3]}$ on $\mathcal{P}$ by
\begin{equation}
\eta^{[4]}=\Lambda^{1}\wedge\Lambda^{2},
\qquad
\mathcal{E}^{[3]}=Z^1\wedge\Lambda^{2}+Z^2\wedge\Lambda^{1}.
\end{equation}
We show that the pair $(\eta^{[4]},\mathcal{E}^{[3]})$ defines a locally
conformal $4$-generalized Poisson structure.
To verify this, let us first substitute the locally conformal relation~\eqref{cemile} into~\eqref{coskun}. 
For the $4$-vector field $\eta^{[4]}$, we compute
\begin{equation}\label{sermet2}
\begin{split}
\eta^{[4]} & = \Lambda^{1} \wedge \Lambda^{2} = \big(-\iota_{dH^1}\eta^{[3]} - H^1\mathcal{E}^{[2]} \big)\wedge \big(\iota_{dH^2}\eta^{[3]}+ H^2\mathcal{E}^{[2]}  \big)\\ 
 & = -\iota_{dH^1}\eta^{[3]} \wedge \iota_{dH^2}\eta^{[3]} - \iota_{dH^1}\eta^{[3]} \wedge H^2\mathcal{E}^{[2]} - H^1\mathcal{E}^{[2]} \wedge \iota_{dH^2}\eta^{[3]} - H^1\mathcal{E}^{[2]} \wedge H^2\mathcal{E}^{[2]} \\
 & = -\iota_{dH^1}\eta^{[3]} \wedge \iota_{dH^2}\eta^{[3]} + H^2\iota_{dH^1}\eta^{[3]} \wedge \iota_{\theta}\eta^{[3]} + H^1\iota_{\theta}\eta^{[3]} \wedge \iota_{dH^2}\eta^{[3]} - H^1H^2\iota_{\theta}\eta^{[3]} \wedge \iota_{\theta}\eta^{[3]},
\end{split}
\end{equation}
where we repeatedly used the distributive property of the interior product over the wedge product.

Next, for the $3$-vector field $\mathcal{E}^{[3]}$, we obtain
\begin{equation}
\begin{split}
\mathcal{E}^{[3]} = & ~Z^1 \wedge \Lambda^{2} + Z^2 \wedge \Lambda^{1} 
\\ = & ~ (-\iota_{dH^1}\mathcal{E}^{[2]})\wedge (\iota_{dH^2}\eta^{[3]}+ H^2\mathcal{E}^{[2]}) + \iota_{dH^2}\mathcal{E}^{[2]}\wedge (-\iota_{dH^1}\eta^{[3]} - H^1\mathcal{E}^{[2]})
\\
=& ~ (\iota_{dH^1}\iota_\theta \eta^{[3]})\wedge (\iota_{dH^2}\eta^{[3]}- H^2 \iota_\theta \eta^{[3]}) - \iota_{dH^2}\iota_\theta \eta^{[3]}\wedge (-\iota_{dH^1}\eta^{[3]} + H^1\iota_\theta \eta^{[3]})
\\
=& ~  \iota_{dH^1}\iota_\theta \eta^{[3]} \wedge \iota_{dH^2}\eta^{[3]} -H^2\iota_{dH^1}\iota_\theta \eta^{[3]}\wedge\iota_\theta \eta^{[3]} + \iota_{dH^2}\iota_\theta \eta^{[3]}\wedge \iota_{dH^1}\eta^{[3]} \\
&\qquad \qquad - H^1 \iota_{dH^2}\iota_\theta \eta^{[3]}\wedge\iota_\theta \eta^{[3]} 
\\
=&-\iota_\theta  \iota_{dH^1}\eta^{[3]}\wedge \iota_{dH^2}\eta^{[3]}  + H^2\iota_\theta\iota_{dH^1} \eta^{[3]}\wedge\iota_\theta \eta^{[3]}  - \iota_{dH^1}\eta^{[3]} \wedge \iota_\theta \iota_{dH^2}\eta^{[3]} \\
&\qquad \qquad + H^1 \iota_\theta\iota_{dH^2} \eta^{[3]}\wedge\iota_\theta \eta^{[3]} 
\\
=& - \iota_\theta (\iota_{dH^1}\eta^{[3]}\wedge \iota_{dH^2}\eta^{[3]}) + \iota_\theta (H^2 \iota_{dH^1} \eta^{[3]}\wedge\iota_\theta \eta^{[3]} )  + \iota_\theta (H^1 \iota_{dH^2} \eta^{[3]}\wedge\iota_\theta \eta^{[3]} ). 
\end{split}
\end{equation}
We observe that $\mathcal{E}^{[3]}$ lies in the image of the contraction operator $\iota_\theta$. Moreover, a direct comparison with the explicit form of $\eta^{[4]}$ given in \eqref{sermet2} reveals a close structural correspondence between the two, indicating that $\mathcal{E}^{[3]}$ can be regarded as the image of $\eta^{[4]}$ under $\iota_\theta$, that is
\begin{equation}
    \begin{split}
\mathcal{E}^{[3]} &= - \iota_\theta (\iota_{dH^1}\eta^{[3]}\wedge \iota_{dH^2}\eta^{[3]}) + \iota_\theta (H^2 \iota_{dH^1} \eta^{[3]}\wedge\iota_\theta \eta^{[3]} )  + \iota_\theta (H^1 \iota_{dH^2} \eta^{[3]}\wedge\iota_\theta \eta^{[3]} )\\
&=\iota_\theta \big( -\iota_{dH^1}\eta^{[3]} \wedge \iota_{dH^2}\eta^{[3]} + H^2\iota_{dH^1}\eta^{[3]} \wedge \iota_{\theta}\eta^{[3]} + H^1\iota_{\theta}\eta^{[3]} \wedge \iota_{dH^2}\eta^{[3]} \\
&\qquad \qquad - H^1H^2\iota_{\theta}\eta^{[3]} \wedge \iota_{\theta}\eta^{[3]} \big)
\\
& = \iota_\theta \eta^{[4]}.
\end{split}
\end{equation} 
This is precisely the locally conformal condition given in~\eqref{newE-n++} for order $4$ (that is, $p=2$).  

Therefore, we conclude that the triple $(\mathcal{P}, \eta^{[4]}, \theta)$ defines a locally conformally $4$-generalized Poisson manifold. 
This construction demonstrates how locally conformal $3$-Nambu--Poisson dynamics naturally lift to a locally conformal $4$-generalized Poisson structure preserving the underlying conformal geometry.

\section*{Conclusion}

In this work, we have investigated the  globalization problem  within the framework of  locally conformal geometry  for Hamiltonian systems defined on  Nambu--Poisson  and  generalized Poisson  manifolds. 
Building upon this setting, we have proposed two new geometric structures:  locally conformally Nambu--Poisson manifolds (in Section \ref{sec:LCNPois}) and  locally conformally generalized Poisson  manifolds (in Section \ref{sec:LCGP}). 
We demonstrated that a locally conformally Nambu--Poisson manifold naturally acquires the structure of a  Nambu--Jacobi manifold (Theorem \ref{memre2}), while a locally conformally generalized Poisson manifold gives rise to a  generalized Jacobi manifold (Theorem \ref{prop-LCGP-Jacobi}).

Furthermore, we established explicit contraction relations between locally conformal Nambu--Poisson structures of different orders, revealing a hierarchical correspondence that is summarized in Diagram~\ref{geomdiagram--n}. 
We also developed Hamiltonian-type dynamics on both locally conformally Nambu--Poisson (in Subsection  \ref{sec:Ham-LCNP}) and locally conformally generalized Poisson manifolds (in Subsection  \ref{subsec:LCGP}). 
Special attention was devoted to the case of locally conformal $3$-Nambu--Poisson structures (in Subsections \ref{subsec:LC-3-NP}, \ref{sec:bi-jac}) and \ref{sec:4-3}) where the formalism admits the definition of  locally conformal bi-Hamiltonian systems and on locally conformal analysis of complete integrability. 

As a future direction, we intend to extend this geometric globalization framework by incorporating \emph{time as an explicit variable}, thereby enabling a locally conformal treatment of non-autonomous and irreversible Hamiltonian dynamics.

\section*{Acknowledgements}
We gratefully acknowledge the support of TÜBİTAK through the 2218 Postdoctoral Research Fellowship Program, Project No. 124C484. We also express our sincere gratitude to Oğul Esen for his detailed reading and valuable comments on this work.

\section{Appendix}

\subsection{Lichnerowicz-deRham (abbreviated as LdR) differential.}\label{sec:LdR}
Consider now an arbitrary manifold $\mathcal{Q}$. Fix a closed one-form $\theta$.  The Lichnerowicz-deRham differential is defined as
\begin{equation} \label{LdR-Diff}
d_\theta: \Gamma^k(\mathcal{Q}) \rightarrow  \Gamma^{k+1}(\mathcal{Q}) : \beta \mapsto d\beta-\theta\wedge\beta,
\end{equation}
where $d$ denotes the exterior (deRham) derivative \cite{GuLi84}. See that the Lichnerowicz-deRham (abbreviated as LdR) differential is a nilpotent operator, i.e.,  $d_{\theta}^2=0$. See, for example, \cite{ChanMurp19} for more properties and some details on cohomological discussions.

\subsection{Musical Mappings} \label{sec:music}
 We shall make frequent use of the notation $\Gamma^m(\mathcal{Q})$ for the space of $k$-forms on a manifold $\mathcal{Q}$, $\mathfrak{X}^m(\mathcal{Q})$ for the space of k-vector fields on $\mathcal{Q}$, and $\mathcal{F}(\mathcal{Q})$ for the space of smooth functions on $\mathcal{Q}$.

We shall use two musical mappings; a musical flat mapping, and a musical sharp mapping. The former is defined, given a two-form $\Omega \in \Gamma^2(\mathcal{Q})$, as 
\begin{equation} \label{flat}
\Omega^\flat: \mathfrak{X}^1(\mathcal{Q})\to \Gamma^1(\mathcal{Q}), \qquad \Omega^\flat(X) (Y)=(\iota_X \Omega) (Y): = \Omega(X,Y).
\end{equation}
It happens that $\Omega^\flat$ is invertible if $\Omega$ is non-degenerate. In this case, the inverse is denoted by $\Omega^\sharp:\Gamma^1(\mathcal{Q}) \to\mathfrak{X}^1(\mathcal{Q})$. If, furthermore, $\mathcal{Q}$ is a symplectic manifold with the symplectic two-form $\Omega \in \Gamma^2(\mathcal{Q})$, then 
\begin{equation}
\alpha(X)=\Omega(\Omega^{\sharp }(\alpha),X).
\end{equation}
for any one-form $\alpha \in \Gamma^1(\mathcal{Q})$, and any vector field $X\in \mathfrak{X}^1(\mathcal{Q})$. 

On the other hand, given a $m$-vector  field $\mathcal{X}$ in  $\mathfrak{X}^k(\mathcal{Q})$, the (right) musical sharp mapping is defined to be
\begin{equation} \label{sharp}
\mathcal{X}^\sharp: \Gamma^{m-1}(\mathcal{Q})\longrightarrow \mathfrak{X}^1(\mathcal{Q}), \qquad \mathcal{X}^\sharp(\beta) (\alpha_1,\alpha_2,\dots,\alpha_{k-1}) = \mathcal{X}(\alpha_1,\alpha_2,\dots,\alpha_{k-1}, \beta).
\end{equation}
We do note that the contraction operator in \eqref{flat} is the left interior derivative, whereas the one in \eqref{sharp} is the right interior derivative, \cite{Marle-SN}. Accordingly, given a non-degenerate $\Omega \in \Gamma^2(\mathcal{Q})$, let a bivector field be given by
\begin{equation} 
\Lambda(\alpha,\beta):=\Omega(\Omega^\sharp(\alpha),\Omega^\sharp(\beta))
=\langle \alpha, \Omega^\sharp(\beta))\rangle. 
\end{equation}
It then follows at once that $\Lambda^\sharp = \Omega^\sharp$.

\subsection{Schouten--Nijenhuis Algebra} \label{sec:SN}

  As indicated in \cite{AlGu96}, the Nambu--Poisson multivector fields are decomposable. Referring to this fact, we state the  Schouten--Nijenhuis bracket. For multivector fields $\mathcal{X}$ of order $m \geq 1$ and   $\mathcal{Y}$ of order $n \geq 1$ given by
\begin{equation} \label{decompi}
\mathcal{X}=X_1\wedge \dots\wedge X_m, \qquad \mathcal{Y}=Y_1\wedge \dots\wedge Y_n,
\end{equation}
the Schouten--Nijenhuis bracket in terms of the Jacobi-Lie bracket is as follows
\begin{equation} \label{SN-def1}
[\mathcal{X},\mathcal{Y}]_{SN}:=\sum_{i,j}(-1)^{i+j}[X_i,Y_j]\wedge X_1\wedge \dots\wedge\widehat X_{i} \wedge \dots \wedge X_m\wedge Y_1\wedge \dots \wedge \widehat Y_{j} \wedge \dots \wedge Y_n, 
\end{equation}
where $\widehat{X}_{i}$ and $\widehat{Y}_{j}$ denote the omission of the elements $X_i$ and $Y_j$, respectively, from the sequences. Notice that the resulting multivector field $[\mathcal{X},\mathcal{Y}]_{SN}$ is of order $m+n-1$. For the case $m=1$ and $n=1$, one has two vector fields and it can be quickly observed that the Schouten--Nijenhuis bracket becomes the Jacobi-Lie bracket: \begin{equation}
[X,Y]_{SN}=[X,Y]=\mathcal{L}_X Y.
\end{equation}

For the case $m \geq 1$ and $n=0$, that is, if there is a multivector field $\mathcal{X}$ of order $m\geq 1$ and a function $F$, the Schouten--Nijenhuis bracket is defined as 
\begin{equation}\label{X-F-SN}
    [\mathcal{X},F]_{SN} := \sum^m_{i=1}(-1)^{i+1}X_i(F) X_1\wedge \dots\wedge\widehat X_{i} \wedge \dots \wedge X_m. 
\end{equation}
See that $[\mathcal{X},F]_{SN} $ is a multivector field of order $m-1$.
If $m=1$ that is we have a vector field, then
\begin{equation}
 [X,F]_{SN}=[X,F]=X(F)=\mathcal{L}_X F,
\end{equation}
where $X(F)$ is the directional derivative, that is, the Lie derivative of $F$ in the direction of $X$.

For $m=n=0$, one has two functions, say $F$ and $H$. The Schouten--Nijenhuis is then defined to be identically zero, that is
\begin{equation}
    [F,H]_{SN}:=0.
\end{equation}



 Consider a multivector field $\mathcal{X}$ of order $m \geq 1$ and a multivector field $\mathcal{Y} $ of order $n\geq 1$. The Schouten--Nijenhuis bracket is a supercommutative bracket satisfying
\begin{equation}\label{SN-commute}
    [\mathcal{Y},\mathcal{X}]_{SN} = -(-1)^{(m-1)(n-1)} [\mathcal{X},\mathcal{Y}]_{SN}.
\end{equation}
Notice that two multivector fields of even order commute, while any other combination of orders gives rise to anticommutation of such multivectors. Now, consider a multivector field $\mathcal{Z}$ of any order. Then, the Schouten--Nijenhuis satisfies the following identity:
\begin{equation}\label{SN-id}
    [\mathcal{X},\mathcal{Y} \wedge \mathcal{Z}]_{SN} = [\mathcal{X},\mathcal{Y}]_{SN} \wedge \mathcal{Z} + (-1)^{(m+1)n}\mathcal{Y} \wedge [\mathcal{X},\mathcal{Z}]_{SN}.
\end{equation}
For a function $F$, an identity similar to \eqref{SN-id} might be written as
\begin{equation}\label{X-FY}
     [\mathcal{X},F\mathcal{Y}]_{SN} = F[\mathcal{X},\mathcal{Y}]_{SN} + (-1)^{m+1} [\mathcal{X},F] _{SN}\wedge \mathcal{Y},
\end{equation}
where $\mathcal{X}$ is a multivector field of order $m$.

\subsection{Right Contraction Mapping (Interior Derivative)} \label{sec:intder} 

The (right) contraction of the multivector field $\mathcal{X}$ in \eqref{decompi}  with a one-form $\theta$ is defined as
\begin{equation}\label{right-cont}
    \iota_{\theta}\mathcal{X} := \sum_{i=1}^{m} (-1)^{i+m} \langle \theta, X_i \rangle X_1 \wedge \dots \wedge \widehat{X_i} \wedge \dots \wedge X_m.
\end{equation}
In terms of the (right) musical sharp map \eqref{sharp}, we can equivalently write
\begin{equation}
  \iota_{\theta} \mathcal{X} = \mathcal{X}^\sharp(\theta).
\end{equation}

By taking $\theta = dF$ as an exact one-form, we can express the Schouten--Nijenhuis bracket of a multivector field $\mathcal{X}$ with a smooth function $F$ as in \eqref{X-F-SN}:
\begin{equation} 
  [\mathcal{X}, F]_{SN} = (-1)^{m+1} \iota_{dF} \mathcal{X}.
\end{equation}
Using the right contraction mapping, the identity exhibited in \eqref{X-FY} becomes
\begin{equation}\label{X-FY-2}
    [\mathcal{X}, F \mathcal{Y}]_{SN} = F [\mathcal{X}, \mathcal{Y}]_{SN} + \iota_{dF} \mathcal{X} \wedge \mathcal{Y}.
\end{equation}

Let us show now the distribution of the right contraction mapping over the wedge product. We consider once more the multivectors $\mathcal{X}$ and $\mathcal{Y}$ given in \eqref{decompi}. Later we have 
\begin{equation}\label{iota-SN-m-n}
     \iota_{dF} (\mathcal{X} \wedge \mathcal{Y}) = (-1)^n\iota_{dF}\mathcal{X} \wedge \mathcal{Y} + \mathcal{X} \wedge \iota_{dF} \mathcal{Y} .
\end{equation}
For the case of $m=1$ and $n=2$ we have the distribution of the right conraction mapping as
\begin{equation}\label{iota-SN-2-2-X}
 \iota_{dF} (X \wedge \mathcal{Y}) =X(F) \mathcal{Y} + X\wedge  \iota_{dF} \mathcal{Y}.
\end{equation}
For the case of $m=n=2$ we have 
\begin{equation}\label{iota-SN-2-2-X-}
 \iota_{dF} (\mathcal{X} \wedge \mathcal{Y}) = \iota_{dF}\mathcal{X} \wedge \mathcal{Y} + \mathcal{X} \wedge \iota_{dF} \mathcal{Y} .
\end{equation}
For $\alpha\in\Gamma^k(\mathcal{Q})$, $\beta\in\Gamma^l(\mathcal{Q})$, we have
\begin{equation}
    \iota_{\alpha \wedge \beta} X = \iota_\alpha\iota_\beta X.
\end{equation}
In particular, for $F_1,...,F_k\in\mathcal{F}(\mathcal{Q})$,
\begin{equation}
     \iota_{dF_1 \wedge \dots  \wedge dF_k} X = \iota_{dF_1}\dots \iota_{dF_k} X .
\end{equation}

For a vector field $X$ and a bivector field $\mathcal{Y}$, the right contraction $\iota_{dF}$ distributes over the Schouten--Nijenhuis bracket as follows:
\begin{equation}
    \iota_{dF}[X,\mathcal{Y}]_{SN} = [\mathcal{Y},\iota_{dF}X]_{SN} + [X,\iota_{dF}\mathcal{Y}]_{SN}.
\end{equation}
For two bivector fields $\mathcal{X} = X_1 \wedge X_2$ and $\mathcal{Y} = Y_1 \wedge Y_2$, the right contraction $\iota_{dF}$ distributes over the Schouten--Nijenhuis bracket as follows:
\begin{equation}\label{iota-SN-2-2+}
    \iota_{dF}([\mathcal{X},\mathcal{Y}]_{SN}) = [\mathcal{Y},\iota_{dF}(\mathcal{X})]_{SN} + [\mathcal{X},\iota_{dF}(\mathcal{Y})]_{SN},
\end{equation}
for a function $F$. Notice that, if $\mathcal{X}=\mathcal{Y}$ of order  $2$ then we have  
\begin{equation}\label{iota-SN-2-2-X+}
 \iota_{dF}([\mathcal{X},\mathcal{X}]_{SN}) = 2[\mathcal{X},\iota_{dF}(\mathcal{X})]_{SN}. 
\end{equation}
In general, for $\mathcal{X}= X_1 \wedge \dots X_m$ and $\mathcal{Y}= Y_1 \wedge \dots Y_n$, the right contraction $\iota_{dF}$ distributes over the Schouten--Nijenhuis bracket as
\begin{equation}
    \iota_{dF}([\mathcal{X},\mathcal{Y}]_{SN}) = (-1)^n [\mathcal{Y},\iota_{dF}\mathcal{X}]_{SN} + [\mathcal{X},\iota_{dF}\mathcal{Y}]_{SN}.
\end{equation}

\bibliographystyle{abbrv}
\bibliography{references}

\end{document}